\newcommand{\weq}{\ = \ }
\numberwithin{equation}{section}
\theoremstyle{plain}
\newtheorem{theorem}{Theorem}[section]
\newtheorem{corollary}[theorem]{Corollary}
\newtheorem{lemma}[theorem]{Lemma}
\theoremstyle{definition}
\newtheorem{remark}[theorem]{Remark}
\newtheorem{examples}[theorem]{Examples}
\DeclareMathOperator*{\esssup}{ess\,sup}
\DeclareMathOperator{\supp}{supp}
\DeclareMathOperator{\loc}{loc}
\subjclass[2010]{Primary: 28A78, 31B15, 46E35, 49J05, 49J10, 60G17; Secondary: 26B30, 28A80, 60G15, 60G22.}
\keywords{direct method in the calculus of variations; compactness; weak convergence; occupation measures; Riesz potentials; fractional Gaussian fields; composition operators.}
\begin{document}

\newcommand\Humlaut[1]{\stackengine{-.05ex}{#1}{\hstretch{.8}{\vstretch{.65}{%
  \mkern1mu\scriptscriptstyle''}}}{O}{c}{F}{F}{S}}

\author[M. Hinz]{Michael Hinz}
\email{mhinz@math.uni-bielefeld.de}
\author[J. M. T\"olle]{Jonas M. T\"olle}
\email{jonas.tolle@aalto.fi}
\author[L. Viitasaari]{Lauri Viitasaari}
\email{lauri.viitasaari@aalto.fi}
	
\address[MH]{Bielefeld University, Faculty of Mathematics, Postfach 100131, 33501 Bielefeld, Germany}
\address[JMT]{Aalto University, Department of Mathematics and Systems Analysis, P.O. Box 11100 (Otakaari 1, Espoo), 00076 Aalto, Finland}
\address[LV]{Aalto University, Department of Information and Service Management, P.O. Box 21210 (Ekonominaukio 1, Espoo), 00076 Aalto, Finland}

\title{On fractal minimizers and potentials of occupation measures}

\thanks{The research of MH was supported in part by the DFG IRTG 2235 \enquote{Searching for the regular in the irregular: Analysis of singular and random systems} and by the DFG CRC 1283, \enquote{Taming uncertainty and profiting from randomness and low regularity in analysis, stochastics and their applications}. JMT gratefully acknowledges travel support by the Magnus Ehrnrooth Foundation. }

\date{\today}

\begin{abstract}
We consider four prototypes of variational problems and prove the existence of fractal minimizers through the direct method in the calculus of variations. By design these minimizers are H\"older curves or H\"older parametrizations of hypersurfaces whose images generally have a non-integer Hausdorff dimension. 
Although their origin is deterministic, their regularity properties are roughly similar to those of typical realizations of stochastic processes. As a key tool, we prove novel continuity and boundedness results for potentials of occupation measures of Gaussian random fields. These results complement well-known results for local times, but hold under much less restrictive assumptions. In an auxiliary section, we generalize earlier results on non-linear compositions of fractional Sobolev functions with $BV$-functions to higher dimensions.
\end{abstract}

\maketitle

{\small
\tableofcontents
}

\section{Introduction}


The direct method in the calculus of variations \cite{Dacorogna2004, JostLiJost} ensures the existence of minimizers for an Euler-Lagrange type functional over a given class of objects, and it merely needs some compactness of the class of admissible objects and the lower semicontinuity of the 
functional to be minimized. Classical examples of such functionals are length, area, volume or potential or kinetic energy. In the best case a minimizer is a curve or hypersurface with a smooth parametrization. In cases where no minimizing (smooth or Lipschitz) parametrizations exist, one may still hope for more general limit objects, such as Young measures \cite{ABM, Giaquinta, Valadier, Young1942} or currents \cite{Federer, Giaquinta}. Also equilibrium problems in potential theory \cite{BHS19,Helms,Landkof} may be viewed as examples for the direct method: A charge distribution, constrained to a given set, is represented by a measure on that set and to minimize an energy functional, the measure has to be sufficiently diffuse. 

We are interested in parametrizations whose images are fractal, i.e., non-smooth in a very strong sense. Within mathematics fractal curves or hypersurfaces occur naturally in connection with dynamical systems and stochastic processes. Although far from smooth, they may have some strong geometric homogeneity features. A \enquote{perfect} example is the classical self-similar Koch curve \cite{Falconer} which is H\"older continuous of order $\gamma=\frac{\log 3}{\log 4}$ and each arc of which has positive and finite $\frac{1}{\gamma}$-dimensional Hausdorff measure. It seems natural to ask whether H\"older curves or hypersurfaces in $\mathbb{R}^n$ having a non-integer Hausdorff dimension within a certain predetermined range can occur as minimizers within the context of the direct method. If so, this would mean that they arise from a purely deterministic (and non-iterative) mechanism.

We consider four rather basic types of variational problems and show the existence of minimizers which by design must be fractal --- rectifiable curves or surfaces would violate given constraints or produce infinite functional values. The first is the minimization of a repulsive self-interaction energy of Riesz type \eqref{E:interaction} over a class of H\"older parametrizations $X:[0,1]^k\to \mathbb{R}^n$ with $X(0)=0$ and fixed H\"older constant, see Theorem \ref{T:minselfinteraction}. For the second problem a medium in the form of a measure $\nu$ on $\mathbb{R}^n$ is given, and we look for parametrizations $X$ which move \enquote{neither too slow nor too fast} and minimize a mutual Riesz interaction energy \eqref{E:repattenergy} with respect to $\nu$, Theorem \ref{T:mininteraction}. This can be interpreted as traveling along a moderately homogeneous trajectory within or close to the given medium $\nu$ and with a minimal interaction. The third and the fourth problem are variants of the first and the second for the curve case $k=1$ with the additional condition that also the endpoint $X(1)=p$ is predetermined, see Theorems \ref{T:crossminselfinteraction} and \ref{T:crossmininteraction}.

The mentioned problems involve classes of parametrizations $X$ having a quantified minimal H\"older regularity $0<\gamma<1$ and, simultaneously, a quantified minimal irregularity, which we describe using their occupation measures $\mu_X$, see \eqref{E:occumeas} below. It is a well-known heuristic fact that a higher (resp. lower) smoothness of the occupation measure $\mu_X$ corresponds to a lower (resp. higher) regularity of the parametrization $X$. This fact can be made precise in various ways, see for instance \cite{Berman69a, GemanHorowitz, GG20b, HTV2021-1}. A sufficient condition for the smoothness of $\mu_X$ is the finiteness of its $\alpha$-Riesz energy (or the boundedness of its $\alpha$-Riesz potential) for suitable $0<\alpha<n$. A key part of our proof that minimizers exist is a simple and robust compactness result which seems tailor made for such situations: An application of the Arzel\`a-Ascoli theorem to a minimizing sequence of parametrizations gives a uniformly convergent subsequence; the H\"older \emph{regularity is preserved} in the limit. Since the corresponding occupation measures converge weakly, the lower semicontinuity of Riesz energies (or Riesz potentials) implies that also the \emph{irregularity} of the parametrizations \emph{is preserved} in the limit. See Lemma \ref{L:compact} and the proof of Theorem \ref{T:mininteraction}.

Another essential part of the existence proof is the observation that the classes of parametrizations $X$ under consideration are nonempty respectively contain at least one element of finite energy. For large enough ambient space dimensions $n$ this is seen most easily from Assouad's embedding theorem \cite{Assouad83, Heinonen01}, which states that there is a function $X:[0,1]^k\to\mathbb{R}^n$ and there are constants $C>c>0$ such that 
\begin{equation}\label{E:Assouad}
c|t-s|^\gamma\leq |X(t)-X(s)|\leq C|t-s|^\gamma,\qquad s,t\in [0,1]^k,
\end{equation}
see \cite[Proposition 4.4]{Assouad83}. The function $X$ in \cite[Proposition 4.4]{Assouad83} is a (multivariate) Koch curve and $\dim_H X([0,1]^k)=\frac{k}{\gamma}$ and a \enquote{perfect} example for the type of parametrization we are looking for. However, Assouad's theorem comes with additional restrictions for parameter ranges and, being an embedding result, is much more than needed for our purposes. To widen the possible parameter ranges and to illustrate that indeed such parametrizations occur most naturally, we provide \emph{novel results for potentials of occupation measures}. 

The study of potentials of occupation measures is our main technical contribution in this article. There is a considerable amount of literature on the local times of deterministic functions or stochastic processes, see for instance \cite{Berman69, Berman69a, Berman70, Berman73, GemanHorowitz, Pitt78} for classical results and \cite{Xiao97,Xiao06, Ayache2011} for more recent. Occupation measures themselves seem to have received less attention; some classical results can be found in \cite{Adler81, CiesielskiTaylor62, Demboetal00, Kahane, PerkinsTaylor87, Ray64}  some contemporary discussions and applications in \cite{CatellierGubinelli,GG20b, HarangPerkowski}. Results on the existence, continuity or boundedness of local times typically come with severe dimensional restrictions. Brownian motion in $\mathbb{R}^n$, for example, has local times if and only if $n=1$; in this case these local times are a.s. continuous. Partly following our former work in \cite{HTV2020, HTV2021-1, HTV2025}, we propose to study potentials of occupation measures and to regard them, where appropriate, as substitutes for local times. In particular for the study of path properties this works very well, some related new results will be discussed in \cite{HTV2026+}. Here we show that classical results for local times by Berman \cite{Berman69} and Pitt \cite{Pitt78} have natural counterparts for potentials of occupation measures, see  Corollary \ref{C:Fourier} (i) and Theorem \ref{T:Pitt}. Applications to fractional Brownian fields are discussed in Lemmas \ref{L:Bermanbasic} and \ref{L:BermanPitt} and to fractional Brownian bridges in Lemma \ref{L:BermanPittBridge}. Compared to the local time case, dimensional restrictions for the continuity of potentials of occupation measures are much weaker. For instance, for any $n-2<\alpha<n$ the $\alpha$-Riesz potential of the occupation measure of Brownian motion in $\mathbb{R}^n$ is a.s. continuous; this can be concluded using \cite{Demboetal00, PerkinsTaylor87} or from Lemma \ref{L:BermanPitt} (i) for $H=\frac12$ and $k=1$.

In an auxiliary section, independent of the aforementioned results, we provide a generalization of a multiplicative estimate for non-linear compositions $\varphi\circ u$ of $\mathbb{R}^n$-valued fractional Sobolev functions $u$ with $BV$-functions $\varphi$, see Theorem \ref{T:main}.
In \cite{HTV2020, HTV2021-1} we had shown this result for curves $u$, here we allow functions $u:G\to\mathbb{R}^n$ defined on general bounded domains $G$ in $\mathbb{R}^k$. Theorem \ref{T:main} could potentially be used to study the existence of solutions to stochastic partial differential equations with irregular coefficients, in the spirit of \cite{HTV2020}. The main sufficient condition for a correct definition of $\varphi\circ u$ and the multiplicative estimate is the finiteness of a non-linear energy of the gradient measure $D\varphi$ of $\varphi$ and the occupation measure $\mu_u$ of $u$; it ensures that only little of the image of $u$ is located close to sites where $D\varphi$ is very concentrated. In Corollary \ref{C:minupperbound} we comment on a related minimization problem structurally similar to Theorems \ref{T:minselfinteraction} and \ref{T:mininteraction}.

In Section \ref{S:minselfinteract}, we show the existence of H\"older parametrizations minimizing a Riesz energy. In Section \ref{S:velocity}, we discuss the idea to fix a velocity band for fractal curves. This motivates the use of a bounded potential condition in 
our existence result for H\"older parametrizations minimizing a mutual Riesz energy, which we state in Section \ref{S:mininteract}. Continuity results for potentials of occupation measures are proved in Section \ref{S:occupot}. In a short Section \ref{S:bridges} we discuss Gaussian bridges and variational problems for curves with fixed endpoints. Section \ref{S:comp} contains the composition result. 

We briefly fix some notation and preliminaries. For $x\in\mathbb{R}^n$ and $r>0$ we write $B(x,r)$ denote the open ball in $\mathbb{R}^n$ with center $x$ and radius $r$; we use the notation $\overline{B}(x,r)$ for the closed ball. 

Let $n\in\mathbb{N}\setminus\{0\}$. Given $0<\alpha<n$ the \emph{Riesz kernel of order $\alpha$} is
\[k_\alpha(x):=|x|^{\alpha-n},\qquad x\in \mathbb{R}^n\setminus\{0\};\]
for convenience we omit the customary multiplicative constant. Given a nonnegative Radon measure $\mu$ on $\mathbb{R}^n$, we consider the \emph{Riesz potential of order $\alpha$}, defined by
\begin{equation}\label{E:Rieszpot}
U^\alpha\mu(x):= \int_{\mathbb{R}^n}k_{\alpha}(x-y)\mu(dy),\quad x\in\mathbb{R}^n. 
\end{equation}
Given two nonnegative Radon measures $\mu$ and $\nu$ on $\mathbb{R}^n$, their \emph{mutual Riesz energy of order $\alpha$} is
\begin{equation}\label{E:repattenergy}
I^\alpha(\mu,\nu):=\int_{\mathbb{R}^n}U^\alpha\mu(x)\nu(dx)=\int_{\mathbb{R}^n}\int_{\mathbb{R}^n}k_{\alpha}(x-y)\mu(dy)\nu(dx).
\end{equation}
See \cite{AH96} or \cite{Landkof}. The mutual interaction energy $I^\alpha(\mu,\nu)$ may be interpreted as the potential energy caused by a soft singular repulsion between $\mu$ and the given medium $\nu$ at small scales. In the case $\nu=\mu$ the \emph{Riesz energy of order $\alpha$ of $\mu$}, 
\begin{equation}\label{E:interaction}
I^\alpha(\mu):=I^\alpha(\mu,\mu),
\end{equation}
is the self-interaction energy of $\mu$. In this case $I^\alpha(\mu)$ describes a soft singular self-repulsion.

\begin{remark}\label{R:discreteCoulomb}
Let $n\geq 3$ and $\alpha=2$, so that $k_2(x)=|x|^{2-n}$, $x\in \mathbb{R}^n\setminus\{0\}$. We can interpret a discrete subset $\mu=\{\mu_i\}_{i=1}^M\subset \mathbb{R}^n$ of $\mathbb{R}^n$ as a configuration of (positively charged) particles. If $\nu=\{\nu_j\}_{j=1}^N$ is another such configuration, then 
$I^2(\mu,\nu)=\sum_i\sum_j k_2(\mu_i-\nu_j)$
is the mutual Coulomb energy of the discrete measures $\mu=\sum_i \delta_{\mu_i}$ and $\nu=\sum_j\delta_{\nu_j}$; it represents the 
part of the total electrostatic energy of the system caused by the mutual repulsive Coulomb interaction between particles of $\mu$ and particles of $\nu$ with self-interactions within $\mu$ and $\nu$ being ignored. In the case $\nu=\mu$ the off-diagonal part 
$\sum_i\sum_j \mathbf{1}_{\{j\neq i\}}k_2(\mu_i-\mu_j)$
of $I^2(\mu,\mu)$ is well understood, it describes the internal electrostatic self-interaction energy of a point configuration or Coulomb gas, see \cite[Section 2.1]{BHS19} or \cite{Ch13}. 
\end{remark}

\begin{remark}
Our main results remain valid if the Riesz kernel $k_\alpha$ is replaced by a kernel of the form $x\mapsto w_{\alpha}(|x|)$,  $x\in\mathbb{R}^n$, where $w_\alpha:[0,+\infty)\to (0,+\infty]$ is a lower semicontinuous function, continuous and bounded outside a neighborhood of zero and such that for some (hence for all) $R>0$ there is a constant $c_R>1$ such that $c^{-1}_Rr^{\alpha-n}\leq w_\alpha(r)\leq c_R\:r^{\alpha-n}$, $0<r<R$.
\end{remark}

Let $k\in \mathbb{N}\setminus\{0\}$ and suppose that
\begin{equation}\label{E:function}
X:[0,1]^k\to\mathbb{R}^n
\end{equation}
is a Borel function. Its \emph{occupation measure} \cite{Berman69, Berman69a, GemanHorowitz} is the Borel probability measure on $\mathbb{R}^n$ defined by
\begin{equation}\label{E:occumeas}
\mu_X(B):=\mathcal{L}^k(\{t\in [0,1]^k: X(t)\in B\}),\quad \text{$B\subset \mathbb{R}^n$ Borel.}
\end{equation}
Note that the map $X\mapsto \mu_X$ is highly nonlinear. If $\mu_X$ is absolutely continuous with respect to $\mathcal{L}^n$, then its density 
\[L_X:=\frac{d\mu}{d\mathcal{L}^n}\in L^1(\mathbb{R}^n)\] 
is called the \emph{local time} of $X$. Occupation measures \cite{Berman69, Berman69a, GemanHorowitz} are special cases of Young measures, \cite{ABM, Valadier}. Narrowly converging subsequences of Young measures provide a workaround in situations where, roughly speaking, parametrizations do not converge to a parametrization of a minimizer. Here we consider situations where subsequences of parametrizations converge even uniformly and use the weak convergence of occupation measures, together with the lower semicontinuity of energy functionals or potentials, to ensure lower bounds for the Hausdorff dimension of minimizers. Interesting applications of occupation measures in optimal control can for instance be found in \cite{Lasserre2008}. We will discuss only the case of continuous functions $X$; in this case
\begin{equation}\label{E:contcasesupport}
\supp \mu_X=X([0,1]^k).
\end{equation}

Given $\gamma\in (0,1)$, we write $\mathcal{C}_0^\gamma([0,1]^k,\mathbb{R}^n)$ for the Banach space of functions $X$ as in \eqref{E:function}  satisfying $X(0)=0$ and
\begin{equation}\label{E:Hoelder}
\|X\|_{\mathcal{C}_0^\gamma}:=\sup_{s,t\in [0,1]^k,\ s\neq t}\frac{|X(t)-X(s)|}{|t-s|^\gamma}<+\infty.
\end{equation}
We write 
\begin{equation}\label{E:unitballgamma}
\mathcal{B}^\gamma_\varrho:=\left\lbrace X\in \mathcal{C}_0^\gamma([0,1]^k,\mathbb{R}^n):\ \|X\|_{\mathcal{C}_0^\gamma}\leq \varrho\right\rbrace
\end{equation}
for the closed ball in $\mathcal{C}_0^\gamma$ with center zero and radius $\varrho>0$.

\section{H\"older constrained minimal self-interaction}\label{S:minselfinteract}

Our first observation is the existence of functions \eqref{E:function} minimizing the self-interaction energy
\begin{equation}\label{E:funcofX}
X\mapsto I^\alpha(\mu_X)=\int_{[0,1]^k}\int_{[0,1]^k}|X(t)-X(s)|^{\alpha-n}\:ds\:dt
\end{equation}
under a H\"older constraint.

\begin{theorem}\label{T:minselfinteraction}
Let $0<\alpha<n$ and $0<\gamma<\frac{k}{n-\alpha}\wedge 1$. 
\begin{enumerate}
\item[(i)] For any $\varrho>0$ there is some $X^\ast\in \mathcal{B}^\gamma_\varrho$ minimizing $X\mapsto I^\alpha(\mu_X)$ over $\mathcal{B}^\gamma_\varrho$.
\item[(ii)] For any minimizer $X^\ast$ of $X\mapsto I^\alpha(\mu_X)$ in $\mathcal{B}^\gamma_\varrho$ and any rectangle $\mathcal{R}\subset [0,1]^k$ the image $X^\ast(\mathcal{R})$ satisfies
\begin{equation}\label{E:measrange}
\mathcal{H}^{n-\alpha}(X^\ast(\mathcal{R}))=+\infty,\qquad \mathcal{H}^{\frac{k}{\gamma}\wedge n}(X^\ast(\mathcal{R}))<+\infty
\end{equation}
and, as a consequence, 
\begin{equation}\label{E:dimrange}
n-\alpha\leq \dim_H X^\ast(\mathcal{R})\leq \frac{k}{\gamma}\wedge n.
\end{equation}
\end{enumerate}
\end{theorem}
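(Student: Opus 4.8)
The plan is to establish (i) by the direct method in the calculus of variations and then to deduce (ii) from the finiteness of the minimal energy, the H\"older scaling of Hausdorff measure, and a classical fact from potential theory.

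For (i), I would first record that under the standing hypothesis the class $\mathcal{B}^\gamma_\varrho$ contains an element of finite $\alpha$-energy, so that $m:=\inf_{X\in\mathcal{B}^\gamma_\varrho}I^\alpha(\mu_X)<+\infty$. Indeed, since $\gamma(n-\alpha)<k$, a multivariate Koch curve $X$ as in \eqref{E:Assouad} --- translated so that $X(0)=0$ and multiplied by a small constant so that $\|X\|_{\mathcal{C}_0^\gamma}\le\varrho$ --- lies in $\mathcal{B}^\gamma_\varrho$, and by \eqref{E:funcofX} its energy is dominated by $\int_{[0,1]^k}\int_{[0,1]^k}|t-s|^{\gamma(\alpha-n)}\,ds\,dt<+\infty$; such curves exist by Assouad's theorem \cite{Assouad83} when $n$ is large enough and, in the remaining parameter ranges, by the occupation-measure estimates of Section~\ref{S:occupot} (e.g.\ Lemma~\ref{L:Bermanbasic} applied to a fractional Brownian field; when $n-\alpha<k$ a rescaled affine map already works). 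Given a minimizing sequence $(X_j)\subset\mathcal{B}^\gamma_\varrho$, the defining bound \eqref{E:Hoelder} together with $X_j(0)=0$ makes $(X_j)$ uniformly bounded and equicontinuous, so by the Arzel\`a--Ascoli theorem a subsequence converges uniformly to some $X^\ast$, and passing to the limit in \eqref{E:Hoelder} gives $X^\ast\in\mathcal{B}^\gamma_\varrho$; this is precisely the compactness part of Lemma~\ref{L:compact}. Uniform convergence implies $\mu_{X_j}\rightharpoonup\mu_{X^\ast}$ (test against $f\in C_b(\mathbb{R}^n)$ and use dominated convergence in $\int_{[0,1]^k}f(X_j(t))\,dt$), hence $\mu_{X_j}\otimes\mu_{X_j}\rightharpoonup\mu_{X^\ast}\otimes\mu_{X^\ast}$, and since $(x,y)\mapsto|x-y|^{\alpha-n}$ is nonnegative and lower semicontinuous on $\mathbb{R}^n\times\mathbb{R}^n$, the portmanteau theorem yields $I^\alpha(\mu_{X^\ast})\le\liminf_j I^\alpha(\mu_{X_j})=m$; since $X^\ast$ is admissible, $I^\alpha(\mu_{X^\ast})=m$ and $X^\ast$ is a minimizer.

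The upper bounds in (ii) hold for \emph{every} $X^\ast\in\mathcal{B}^\gamma_\varrho$, not just for minimizers. Writing $L:=\|X^\ast\|_{\mathcal{C}_0^\gamma}\le\varrho$, one has $\diam X^\ast(U)\le L(\diam U)^\gamma$ for any $U$, so inserting efficient coverings of a rectangle $\mathcal{R}$ into the definition of Hausdorff measure gives the standard estimate $\mathcal{H}^{k/\gamma}(X^\ast(\mathcal{R}))\le L^{k/\gamma}\mathcal{H}^k(\mathcal{R})<+\infty$ (see e.g.\ \cite{Falconer}); and if $k/\gamma\ge n$, then $X^\ast(\mathcal{R})$ is a bounded subset of $\mathbb{R}^n$ and hence has finite $\mathcal{H}^n$-measure automatically. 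In either case $\mathcal{H}^{\frac k\gamma\wedge n}(X^\ast(\mathcal{R}))<+\infty$, which gives the right-hand inequalities in \eqref{E:measrange} and \eqref{E:dimrange}.

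For the lower bounds, fix a nondegenerate rectangle $\mathcal{R}\subset[0,1]^k$ and let $\sigma$ be the push-forward of the restriction $\mathcal{L}^k\Ll\mathcal{R}$ under $X^\ast$; it is a nonzero Radon measure with $\supp\sigma=X^\ast(\mathcal{R})$ by continuity of $X^\ast$, cf.\ \eqref{E:contcasesupport}, and by \eqref{E:funcofX} and nonnegativity of the integrand $I^\alpha(\sigma)=\int_\mathcal{R}\int_\mathcal{R}|X^\ast(t)-X^\ast(s)|^{\alpha-n}\,ds\,dt\le I^\alpha(\mu_{X^\ast})=m<+\infty$. At this point I would invoke the classical fact that a set supporting a nonzero measure of finite $\alpha$-Riesz energy necessarily has $\mathcal{H}^{n-\alpha}$-measure equal to $+\infty$ --- equivalently, every compact set of finite $(n-\alpha)$-dimensional Hausdorff measure has vanishing $\alpha$-Riesz capacity; see \cite{Landkof} or \cite{AH96}. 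Applied to $\sigma$ this yields $\mathcal{H}^{n-\alpha}(X^\ast(\mathcal{R}))=+\infty$ and hence $\dim_H X^\ast(\mathcal{R})\ge n-\alpha$, completing \eqref{E:measrange} and \eqref{E:dimrange} (the hypothesis $\gamma<\frac{k}{n-\alpha}$ being exactly what guarantees $n-\alpha\le\frac k\gamma\wedge n$, so the range in \eqref{E:dimrange} is non-empty). Note that minimality of $X^\ast$ is used only through the finiteness of $m$; the dimension estimates hold for any admissible $X$ of finite energy.

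The two steps requiring more than routine bookkeeping are the non-emptiness of $\mathcal{B}^\gamma_\varrho$ by a finite-energy curve --- which outside the Assouad regime rests on the occupation-measure estimates that form the technical core of the paper --- and the implication ``finite $\alpha$-energy $\Rightarrow$ infinite $(n-\alpha)$-Hausdorff measure of the support'', whose proof relies on the lower density theorem for Hausdorff measures; everything else is the textbook direct method together with the H\"older rescaling of Hausdorff measure and the lower semicontinuity of Riesz energies under weak convergence supplied by Lemma~\ref{L:compact}.
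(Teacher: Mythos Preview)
Your proposal is correct and mirrors the paper's approach: the direct method via Lemma~\ref{L:compact} and lower semicontinuity of Riesz energies for (i), and the Falconer-type potential-theoretic dimension bounds spelled out in Remark~\ref{R:trivia}~(ii) for (ii). The only slip is that your example Lemma~\ref{L:Bermanbasic} covers merely $\gamma<\frac{k}{2(n-\alpha)}$; the paper establishes non-emptiness for the full range $\gamma<\frac{k}{n-\alpha}$ via the direct moment computation in Lemma~\ref{L:Ialphafinite} (Lemma~\ref{L:BermanPitt}~(i) from your cited Section~\ref{S:occupot} would also do).
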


\begin{remark}\label{R:trivia}\mbox{}
\begin{enumerate}
\item[(i)] We are mainly interested in cases where $1\leq k\leq n$. In such cases $\dim_H X^\ast(\mathcal{R})$ can be arbitrarily close to $n-\alpha$, provided that $\gamma$ is chosen close enough to its upper bound. If in addition $\alpha$ is non-integer, this forces $\dim_H X^\ast(\mathcal{R})$ to be non-integer.
\item[(ii)] Theorem \ref{T:minselfinteraction} (ii) follows from well-known facts: The finiteness of 
$I^\alpha(\mu_X)$ gives the first item in \eqref{E:measrange} and the lower bound in \eqref{E:dimrange}, the H\"older constraint gives the second item in \eqref{E:measrange} and the upper bound in \eqref{E:dimrange}. See \cite[Proposition 2.3 and Theorem 4.13]{Falconer}.
\end{enumerate}
\end{remark}

\begin{remark}
Although $\mu\mapsto I^\alpha(\mu)$ is quadratic and, in particular, convex, the functional \eqref{E:funcofX} is not convex. Not even its effective domain $\{X:[0,1]^k\to\mathbb{R}\ \text{Borel}:\ I^\alpha(\mu_X)<\infty\}$ is convex. The functional \eqref{E:funcofX} is homogeneous in the sense that 
\begin{equation}\label{E:homo}
I^\alpha \mu_{\lambda X}=\lambda^{\alpha-n}I^\alpha\mu_X,\qquad \lambda>0.
\end{equation}
\end{remark}

\begin{remark}\mbox{}
\begin{enumerate}
\item[(i)] One cannot expect $X^\ast$ to be unique; the radial symmetry of $k_\alpha$ gives quick counterexamples.
\item[(ii)] It is well-known that given a compact set $K\subset\mathbb{R}^n$, the energy $\mu\mapsto I^\alpha(\mu)$ admits a unique minimizer within
the class of all Borel probability measures $\mu$ supported in $K$, namely the equilibrium measure $\mu_K$ of $K$ with respect to $I^\alpha$; see \cite[Chapter II, Section 1]{Landkof}. For $K=\overline{B}(0,\varrho k^{\gamma/2})$ we clearly have $I^\alpha(\mu_K)\leq I^\alpha(\mu_{X^\ast})$. The classical equilibrium problem does not involve any further restriction on the support of $\mu_K$, while $\mu_{X^\ast}$ has to be an occupation measure and, in particular, to obey the topological constraint 
\begin{equation}\label{E:topoconstraint}
\supp\mu_{X^\ast}=X^\ast([0,1]^k).
\end{equation}
\item[(iii)] For self-interaction energies $\mu\mapsto\int_{\mathbb{R}^n}\int_{\mathbb{R}^n} k(x-y)\mu(dy)\mu(dx)$, viewed as functionals on Borel probability measures $\mu$ on $\mathbb{R}^n$, (local) minimizers may actually be more diffuse: If, roughly speaking, $\Delta k(x)\geq c|x|^{-\beta}$ for small $x$, $\Delta k$ is singular at the origin $0$ and a number of further natural hypotheses are satisfied, then local minimizers $\mu_\ast$ with respect to the $\infty$-Wasserstein distance have a support $\supp\mu_\ast$ with Hausdorff dimension at least $\beta$. This was proved in \cite[Theorem 1]{Balague2013}. For $k=k_\alpha$ this gives $\beta=n-\alpha+2$, which is a better lower bound on $\dim_H \supp \mu_\ast$ than the generic $n-\alpha$ in \eqref{E:dimrange}. On the other hand, the authors of \cite{Balague2013} comment that simulations never showed a non-integer Hausdorff dimension for $\supp \mu_\ast$, cf. \cite[p. 1058]{Balague2013}.
\end{enumerate}
\end{remark}

Theorem \ref{T:minselfinteraction} (i) follows using the classical direct method and two observations. The first observation is a simple but useful compactness result.

\begin{lemma}\label{L:compact}
Every sequence $(X_i)_i\subset \mathcal{B}_\varrho^\gamma$ has a subsequence $(X_{i_j})_j$ converging uniformly to some $X^\ast\in\mathcal{B}_\varrho^\gamma$ and such that $(\mu_{X_{i_j}})_j$ convergences weakly to $\mu_{X^\ast}$.
\end{lemma}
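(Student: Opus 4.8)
The plan is to prove the two assertions in sequence: first uniform convergence of a subsequence to some $X^\ast$ with the Hölder bound preserved, then weak convergence of the associated occupation measures.

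\textbf{Step 1: extracting a uniformly convergent subsequence.} I would invoke the Arzelà--Ascoli theorem. The sequence $(X_i)_i\subset\mathcal{B}^\gamma_\varrho$ is uniformly equicontinuous, since for all $i$ and all $s,t\in[0,1]^k$ we have $|X_i(t)-X_i(s)|\leq\varrho|t-s|^\gamma$, and it is uniformly bounded because $X_i(0)=0$ forces $|X_i(t)|=|X_i(t)-X_i(0)|\leq\varrho|t|^\gamma\leq\varrho k^{\gamma/2}$ on $[0,1]^k$. Hence some subsequence $(X_{i_j})_j$ converges uniformly on $[0,1]^k$ to a continuous limit $X^\ast$. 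Passing to the limit in the inequality $|X_{i_j}(t)-X_{i_j}(s)|\leq\varrho|t-s|^\gamma$ (valid for each fixed pair $s,t$) gives $\|X^\ast\|_{\mathcal{C}^\gamma_0}\leq\varrho$, and evidently $X^\ast(0)=\lim_j X_{i_j}(0)=0$, so $X^\ast\in\mathcal{B}^\gamma_\varrho$.

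\textbf{Step 2: weak convergence of occupation measures.} Write $Y_j:=X_{i_j}$ for brevity. Each $\mu_{Y_j}$ is a Borel probability measure on $\mathbb{R}^n$, and in fact they are all supported in the fixed compact ball $\overline{B}(0,\varrho k^{\gamma/2})$ by the bound above; likewise $\supp\mu_{X^\ast}$ lies in that ball. By the change-of-variables / image-measure description of the occupation measure, for any bounded continuous $f:\mathbb{R}^n\to\mathbb{R}$ one has $\int_{\mathbb{R}^n}f\,d\mu_{Y_j}=\int_{[0,1]^k}f(Y_j(t))\,dt$ and similarly for $X^\ast$. Since $Y_j\to X^\ast$ uniformly, $f\circ Y_j\to f\circ X^\ast$ uniformly on $[0,1]^k$ (using uniform continuity of $f$ on the compact ball containing all the images), hence $\int_{[0,1]^k}f(Y_j(t))\,dt\to\int_{[0,1]^k}f(X^\ast(t))\,dt$ by dominated convergence. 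Therefore $\int f\,d\mu_{Y_j}\to\int f\,d\mu_{X^\ast}$ for every bounded continuous $f$, which is exactly weak convergence $\mu_{Y_j}\to\mu_{X^\ast}$.

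\textbf{Main obstacle.} The argument is essentially routine; the only point requiring a little care is handling the nonlinear map $X\mapsto\mu_X$. The key simplification is that all measures involved are supported in one fixed compact set, so that testing against $f\in C_b(\mathbb{R}^n)$ reduces to testing against $f\in C(\overline{B}(0,\varrho k^{\gamma/2}))$, where uniform convergence of the $X_{i_j}$ translates directly into convergence of the integrals. No tightness issue arises and no delicate lower-semicontinuity of the occupation map is needed at this stage (that enters only later, in the proof of Theorem \ref{T:minselfinteraction}, via lower semicontinuity of $\mu\mapsto I^\alpha(\mu)$ under weak convergence).
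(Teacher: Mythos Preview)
Your proof is correct and follows essentially the same route as the paper: Arzel\`a--Ascoli for the uniformly convergent subsequence (with the H\"older bound preserved in the limit), followed by weak convergence of the occupation measures. The only cosmetic difference is in Step~2: the paper phrases it as ``uniform convergence implies convergence in probability with respect to $\mathcal{L}^k|_{[0,1]^k}$, hence convergence in distribution'' (citing Kallenberg), whereas you verify weak convergence directly by testing against $f\in C_b(\mathbb{R}^n)$ and using the change-of-variables identity $\int f\,d\mu_X=\int_{[0,1]^k}f(X(t))\,dt$. Your argument is self-contained and in fact slightly more elementary; the compact-support observation you make is a nice touch but not strictly necessary, since dominated convergence with the bound $\|f\|_\infty$ already suffices.
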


\begin{proof}
The first statement is clear by the Arzel\`a-Ascoli theorem. Since $\mathcal{L}^k([0,1]^k)=1$, this implies the convergence in probability $\mathcal{L}^k|_{[0,1]^k}$. As a consequence, the occupation measures $\mu_{X_{i_j}}$ converge weakly to $\mu_{X^\ast}$, see \cite[Lemma 5.7]{Kallenberg} or \cite[p. 150]{ABM}.
\end{proof}

The second observation is that $\mathcal{B}_\varrho^\gamma$ indeed contains elements $X$ with $I^\alpha(\mu_X)<+\infty$; we discuss several proofs below.

\begin{lemma}\label{L:Ialphafinite}
Let $0<\alpha<n$ and $0<\gamma<\frac{k}{n-\alpha}\wedge 1$. There is some $X\in \mathcal{B}^\gamma_\varrho$ with $I^\alpha(\mu_X)<+\infty$. 
\end{lemma}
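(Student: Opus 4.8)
The plan is to exhibit an explicit function $X\in\mathcal{B}^\gamma_\varrho$ whose occupation measure has finite $\alpha$-Riesz energy, and to do so in the simplest way possible: take $X$ to be (a rescaling of) a bi-H\"older "Koch-type" parametrization of the sort produced by Assouad's embedding theorem. Concretely, by \cite[Proposition 4.4]{Assouad83} there is a map $Y:[0,1]^k\to\mathbb{R}^m$ (for $m$ large enough depending on $k$ and $\gamma$) and constants $C>c>0$ with $c|t-s|^\gamma\le|Y(t)-Y(s)|\le C|t-s|^\gamma$ for all $s,t\in[0,1]^k$. Composing with a suitable affine map if necessary and rescaling by a small factor, we may assume $Y$ takes values in $\mathbb{R}^n$ (padding with zeros if $m<n$; if $m>n$ one instead appeals directly to the composition-operator machinery below, or simply restricts to the range of parameters where Assouad applies — but see the next paragraph for the dimension-free argument) and $\|Y\|_{\mathcal{C}^\gamma_0}\le\varrho$ after subtracting $Y(0)$. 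It then remains to estimate $I^\alpha(\mu_Y)=\int_{[0,1]^k}\int_{[0,1]^k}|Y(t)-Y(s)|^{\alpha-n}\,ds\,dt$.

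For the energy estimate, the lower bound $|Y(t)-Y(s)|\ge c|t-s|^\gamma$ and $\alpha-n<0$ give
\begin{equation}\label{E:Ialphaest}
I^\alpha(\mu_Y)\wle c^{\alpha-n}\int_{[0,1]^k}\int_{[0,1]^k}|t-s|^{\gamma(\alpha-n)}\,ds\,dt.
\end{equation}
The double integral on the right is finite precisely when $\gamma(n-\alpha)<k$, i.e. $\gamma<\frac{k}{n-\alpha}$, which is exactly the hypothesis; indeed, fixing $s$ and integrating in polar-type coordinates over $[0,1]^k$, the inner integral is comparable to $\int_0^{\sqrt k}r^{-\gamma(n-\alpha)}r^{k-1}\,dr<+\infty$ under this condition. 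This proves $I^\alpha(\mu_Y)<+\infty$, and $Y\in\mathcal{B}^\gamma_\varrho$ by construction, which is the claim.

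The main obstacle is that Assouad's theorem only yields a parametrization \emph{into some $\mathbb{R}^m$}, and $m$ may exceed $n$ — so for small ambient dimension $n$ the crude construction above does not directly land in $\mathbb{R}^n$. This is precisely the gap the paper says it wants to close with "novel results for potentials of occupation measures." So I expect the authors to give (at least) a second proof that is dimension-free: one does not actually need the \emph{upper} H\"older bound on $X$, only the \emph{lower} bound $|X(t)-X(s)|\gtrsim|t-s|^\gamma$ together with $X\in\mathcal{C}^\gamma_0$, and such $X$ exist in $\mathbb{R}^n$ for every $n\ge1$ and every admissible $\gamma$ (for instance suitable fractional-Gaussian-field sample paths, whose occupation-measure potentials are controlled by the continuity/boundedness results advertised in Section \ref{S:occupot}; a deterministic realization then exists by a first-moment argument). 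Once the lower bound $|X(t)-X(s)|\ge c|t-s|^\gamma$ holds on a set of $(s,t)$ of full $\mathcal{L}^{2k}$-measure, estimate \eqref{E:Ialphaest} goes through verbatim and finiteness again follows from $\gamma<\frac{k}{n-\alpha}$. I would present the Assouad-based argument first as the "quick proof in high dimension" and then the Gaussian-field argument (or the composition-operator argument) as the one covering all $n$, matching the narrative of the introduction.
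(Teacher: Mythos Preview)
Your Assouad-based argument for the restricted parameter range is correct and matches the paper's discussion preceding the proof. The gap is in the general case. You write that fractional-Gaussian-field sample paths furnish examples of $X\in\mathcal{C}^\gamma_0$ satisfying the \emph{lower} bound $|X(t)-X(s)|\gtrsim|t-s|^\gamma$ on a set of full $\mathcal{L}^{2k}$-measure, so that estimate \eqref{E:Ialphaest} goes through. This is false: for a fractional Brownian $(k,n)$-field $B^H$ and fixed $s\neq t$, the increment $B^H(t)-B^H(s)$ is a nondegenerate Gaussian vector and hence takes arbitrarily small values with positive probability; pathwise there is no such lower bound anywhere (indeed $B^H$ is a.s. nowhere bi-H\"older). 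So the route through \eqref{E:Ialphaest} is unavailable for Gaussian realizations.

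What the paper actually does is bypass the pathwise lower bound entirely and compute the expectation directly: choosing $\gamma<H<\frac{k}{n-\alpha}$, one has
\[
\mathbb{E}\,|B^H(t)-B^H(s)|^{\alpha-n}
=c\,|t-s|^{-nH}\int_{\mathbb{R}^n}e^{-|y|^2/(2|t-s|^{2H})}|y|^{\alpha-n}\,dy
=c'|t-s|^{(\alpha-n)H},
\]
which is integrable over $[0,1]^{2k}$ precisely because $(n-\alpha)H<k$. Fubini then gives $\mathbb{E}\,I^\alpha(\mu_{B^H})<+\infty$, hence $I^\alpha(\mu_{B^H(\cdot,\omega)})<+\infty$ for $\mathbb{P}$-a.e.~$\omega$; Kolmogorov--Chentsov supplies the $\gamma$-H\"older modification, and a rescaling by $\varrho(1+K(\omega))^{-1}$ lands the path in $\mathcal{B}^\gamma_\varrho$. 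Your phrase ``a first-moment argument'' gestures at this, but the mechanism is the Gaussian moment identity above, not any pointwise control of increments.
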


\begin{proof}[Proof of Theorem \ref{T:minselfinteraction} (i)]
Note first that $\inf_{X\in \mathcal{B}^\gamma}I^\alpha(\mu_{X})<+\infty$ by
Lemma \ref{L:Ialphafinite}. If $(X_i)_i\subset \mathcal{B}^\gamma$ is such that $\lim_{i\to \infty} I^\alpha(\mu_{X_i})=\inf_{X\in \mathcal{B}^\gamma}I^\alpha(\mu_{X})$, then, 
as $\mu\mapsto I^\alpha(\mu)$ is lower semicontinuous with respect to weak convergence, \cite[p. 78]{Landkof}, we have 
\[\inf_{X\in \mathcal{B}^\gamma}I^\alpha(\mu_{X})\leq I^\alpha(\mu_{X^\ast})\leq \liminf_{j\to\infty}I^\alpha(\mu_{X_{i_j}})=\inf_{X\in \mathcal{B}^\gamma}I^\alpha(\mu_{X})\]
for $X^\ast=\lim_{j\to\infty}X_{i_j}$ from Lemma \ref{L:compact}. 
\end{proof}

\begin{remark}\label{R:polymers}\mbox{} 
\begin{enumerate}
\item[(i)] For $k=1$ the minimizer $X^\ast$ is a curve with image $X([0,1])\subset \mathbb{R}^n$. For the physically relevant case that $n=3$ and $\alpha=2$ the finiteness of the Coulomb energy $I^2(\mu_{X^\ast})$ forces $X^\ast([0,1])$ to be nowhere rectifiable in the sense that any arc $X^\ast([a,b])$, $0\leq a<b\leq 1$, has infinite length $\mathcal{H}^1(X^\ast([a,b]))=\infty$. Such curves might be candidates for alternative models of individual charged polymer chains. The well-established theory or random polymers is formulated in terms of statistical mechanics, based on an \emph{ensemble} point of view, see for instance \cite{deGennes1979}. In \cite{Havlin1982a, Havlin1982b} it had already been suggested that models for \emph{individual} polymer chains could be based on scaling properties of fractal curves. 
\item[(ii)] Perhaps is interesting to note that typical polymers in the mathematical formulation \cite{Westwater1980} of the classical Edwards model \cite{Edwards1965} have Hausdorff dimension two, as proved in \cite{Zhou1992}. This is higher than needed if one asks only for the finiteness of $I^2(\mu_X)$.
\end{enumerate}
\end{remark}

Lemma \ref{L:Ialphafinite} can be proved in several ways. Under the additional restriction that 
\begin{equation}\label{E:Assouadcond}
k(\lfloor \frac{1}{\gamma}\rfloor+1)\leq n<\frac{k}{\gamma}+\alpha,
\end{equation}
where $\lfloor \frac{1}{\gamma}\rfloor$ denotes the integer part of $\frac{1}{\gamma}$, the most immediate proof 
is provided by Assouad's embedding theorem \cite[Proposition 4.4]{Assouad83}, which states that there are a function $X:[0,1]^k\to\mathbb{R}^n$ and constants $C>c>0$ satisfying \eqref{E:Assouad}. We may assume that $X(0)=0$. Condition \eqref{E:Assouadcond} is strict in the sense that even for $k=1$ the integer $\lfloor \frac{1}{\gamma}\rfloor+1$ in the left-hand side cannot be replaced by $\lfloor \frac{1}{\gamma}\rfloor$, cf. \cite[Section 4.5]{Assouad83}. Condition \eqref{E:Assouadcond} restricts the choice of possible $\alpha$; for instance, $k=1$ and $\gamma=\frac12$ require $\alpha>1$. Another proof for Lemma \ref{L:Ialphafinite} in the case $k=1$ is \cite[Theorem 34]{GG20b}, which shows that there are abundant functions $X$ with the desired properties. The proof of \cite[Theorem 34]{GG20b} is essentially probabilistic, see \cite[Theorem 33]{GG20b}. To prove Lemma \ref{L:Ialphafinite} in its full generality we can use a similar, but simpler probabilistic argument. 

Recall that a \emph{fractional Brownian $(k,1)$-field $b^H$ with Hurst index $0<H<1$} over a probability space $(\Omega,\mathcal{F},\mathbb{P})$ is a centered Gaussian random field $b^H:[0,+\infty)^k\times\Omega\to \mathbb{R}$ satisfying $b^H(0)=0$ $\mathbb{P}$-a.s. and 
\[\mathbb{E}|b^H(t)-b^H(s)|^2=|t-s|^{2H},\quad s,t\in [0,+\infty).\]
The case $k=1$ gives the usual fractional Brownian motion with Hurst index $0<H<1$, the case $H=\frac12$ gives L\'evy's Brownian field; Brownian motion results if both $k=1$ and $H=\frac12$. Note that, since the increments are centered Gaussian, 
\begin{equation}\label{E:genGauss}
\mathbb{E}|b^H(t)-b^H(s)|^\ell=(\ell-1)!!|t-s|^{\ell H},\qquad s,t\in [0,+\infty),
\end{equation}
for any even integer $\ell\geq 2$. Given $n$ independent copies $b_1^H$, ..., $b_n^H$ of $b^H$, the random field $B^H=(b_1^H,...,b_n^H):[0,+\infty)^k\times\Omega\to \mathbb{R}^n$ is called a \emph{fractional Brownian $(k,n)$-field $B^H$ with Hurst index $0<H<1$}. It is centered, Gaussian, satisfies $B^H(0)=0$ $\mathbb{P}$-a.s. and 
\begin{equation}\label{E:indepcomp}
\mathbb{E}|B^H(t)-B^H(s)|^\ell=n^{\ell/2}\:\mathbb{E}|b^H(t)-b^H(s)|^\ell,\quad s,t\in [0,+\infty)^k,
\end{equation}
for any even integer $\ell\geq 2$. See for instance \cite[Chapter 18]{Kahane} or \cite{Xiao06}.

\begin{proof}[Proof of Lemma \ref{L:Ialphafinite}]
Let $0<\gamma<H<\frac{k}{n-\alpha}$ and let $B^H$ be a fractional Brownian $(k,n)$-field with Hurst index $H$ over $(\Omega,\mathcal{F},\mathbb{P})$. The Kolmogorov-Chentsov theorem \cite[Theorem 1.4.1]{Kunita90} together with \eqref{E:genGauss} and \eqref{E:indepcomp} ensure the existence of a modification of $B^H$, a random variable $K>0$ and an event $\Omega_0\in \mathcal{F}$ with $\mathbb{P}(\Omega_0)=1$ such that
$|B^H(t,\omega)-B^H(s,\omega)|\leq K(\omega)|t-s|^\gamma$ for all $s,t\in [0,1]^k$ and $\omega\in \Omega_0$.
Proceeding as in \cite[Example 4.23]{HTV2020}, we see that
\[\mathbb{E}|B^H(t)-B^H(s)|^{\alpha-n}=c|t-s|^{-nH}\int_{\mathbb{R}^n}\exp\Big(-\frac{|y|^2}{2|t-s|^{2H} }\Big)|y|^{\alpha-n}dy=c'|t-s|^{(\alpha-n)H};\]
here $c$ and $c'$ are positive constants independent of $s$ and $t$. Since $(n-\alpha)H<k$, it follows that
\begin{multline}
\mathbb{E} \int_{[0,1]^k}\int_{\{s\in [0,1]^k:\ |X(t)-X(s)|<R\}}  |B^H(t)-B^H(s) |^{\alpha-n}dtds\notag\\
=c\int_{[0,1]^k}\int_{\{s\in [0,1]^k:\ |X(t)-X(s)|<R\}}|t-s|^{(\alpha-n)H}dsdt<+\infty
\end{multline}
for any $R>0$.
Consequently we can find $\Omega_1\in\mathcal{F}$ with $\mathbb{P}(\Omega_1)=1$, $\Omega_1\subset \Omega_0$ and such that, since 
\begin{equation}\label{E:trivial}
k_\alpha(z)\leq R^{\alpha-n},\qquad |z|\geq R,
\end{equation} 
we have $I^{\alpha}(\mu_{B^H(\cdot,\omega)})<+\infty$ for all $\omega\in \Omega_1$. With such $\omega\in \Omega_1$ we can now, by \eqref{E:homo}, take $X(t):=\varrho(1+K(\omega))^{-1}B^H(t,\omega)$, $t\in [0,1]^k$.
\end{proof}

\section{Bounded potentials and velocity bands}\label{S:velocity}

Suppose that $\nu$ is a given Borel probability measure on $\mathbb{R}^n$. Our next question is whether, at least in some cases, the minimization of the nonlinear functional 
\[X\mapsto I^\alpha(\mu_X,\nu)=\int_{\mathbb{R}^n}\int_0^T|X(t)-y|^{\alpha-n}dt\:\nu(dy)\]
under a H\"older condition (\ref{E:Hoelder}) will force minimizers to have an image of non-integer Hausdorff dimension, similarly as observed in Remark \ref{R:trivia} (ii) for $X\mapsto I^\alpha(\mu_X)$.

In general this is not the case: The finiteness of $I^\alpha(\mu_X,\nu)$ for the occupation measure $\mu_X$ of $X:[0,1]^k\to\mathbb{R}^n$ and a measure $\nu$ does not necessarily have any implication for the geometry of the image $X([0,1]^k)$. This is easily seen in the case $k=1$ of curves.

\begin{examples}\label{Ex:interval}
Suppose that $n=2$, $0<\alpha<1$, $\nu=\mathcal{H}^1|_{\{0\}\times [-1,1]}$ and $X=(X_1,X_2):[0,1]\to [0,+\infty)\times \mathbb{R}$ is a curve with $X(0)=0$ and $X_1(t)>0$ for all $0<t\leq 1$. Since 
\begin{align}
\int_{\mathbb{R}^2}\int_{\mathbb{R}^2}|x-y|^{\alpha-2}\nu(dy)\mu_X(dx)
&=\int_0^1\int_{-1}^1 \left[X_1(t)^2+(y-X_2(t)^2\right]^{\frac{\alpha}{2}-1}\:dy\:dt \notag\\
&=\int_0^1 \int_{-1}^1 \left[X_1(t)^2\left(1+\Big(\frac{y-X_2(t)}{X_1(t)}\Big)^2\right)\right]^{\frac{\alpha}{2}-1}\:dy\:dt\notag\\
&\leq \Big(\int_{-\infty}^{+\infty}(1+\eta^2)^{\frac{\alpha}{2}-1}d\eta\Big)\int_0^1X_1(t)^{\alpha-1}\:dt,\notag
\end{align}
a sufficient condition for the finiteness of $I^\alpha(\mu_X,\nu)$ is to have $X_1(t)\geq c\:t^\gamma$ with $0<\gamma<\frac{1}{1-\alpha}$. This is satisfied for the Lipschitz parametrization $(X_1(t),X_2(t))=(t,0)$, $t\in [0,1]$.
\end{examples}

If $\nu$ is very concentrated, the curve $X$ may still be a parametrization of a line, but it has to leave the support of $\nu$ more quickly. 

\begin{examples}\label{Ex:pointmass}
Suppose that $n=2$, $0<\alpha<1$, $\nu=\delta_0$ is the point mass probability measure at the origin and $X:[0,1]\to [0,+\infty)\times\mathbb{R}$ is a curve with $X(0)=0$. Then 
\begin{equation}\label{E:pointmass}
\int_{\mathbb{R}^2}\int_{\mathbb{R}^2}|x-y|^{\alpha-2}\nu(dy)\mu_X(dx)=\int_0^1 |X(t)|^{\alpha-2}dt
\end{equation}
cannot be finite if $X$ is Lipschitz. However, even if $X([0,1])$ is a straight line segment, (\ref{E:pointmass}) is finite if $X$ leaves $0$ fast enough; an example is $X(t)=(t^\gamma,0)$, $t\in [0,1]$ with $0<\gamma<\frac{1}{2-\alpha}$. This parametrization of $[0,1]\times\{0\}$ has \enquote{infinite velocity} at the initial point.
\end{examples} 

These examples show that for minimizers $X\mapsto I^\alpha(\mu_X,\nu)$ to have an interesting geometry, additional constraints on the \enquote{velocity} of $X$ must be prescribed. In variational problems involving absolutely continuous curves $X:[0,1]\to\mathbb{R}^n$ it is usually assumed that the velocity of $X$ is bounded and bounded away from zero, at least in the $\mathcal{L}^1$-a.e. sense. But in this case $X$ is Lipschitz and, as seen in Examples \ref{Ex:pointmass}, this may exclude a finite mutual energy. For a non-absolutely continuous curve $X:[0,1]\to\mathbb{R}^n$ no classical concept of velocity is available. 

We could forbid too drastic changes of \enquote{velocity} by imposing bounds on the asymptotic uniform behavior of the oscillation type quantity $\sup_{0<|t-s|<h}|X(t)-X(s)|$. We call a positive increasing function $m_\pm$ regularly varying at $0$ a \emph{uniform upper modulus $m_+$ for $X$} respectively a \emph{uniform lower modulus $m_-$ for $X$} if
\begin{equation}\label{E:uppermod}
\limsup_{h\to 0} \frac{1}{m_+(h)}\sup_{h\leq t\leq 1-h}\ \sup_{0<|t-s|\leq h}|X(t)-X(s)|<+\infty
\end{equation}
respectively
\begin{equation}\label{E:lowermod}
\liminf_{h\to 0}\frac{1}{m_-(h)}\inf_{h\leq t\leq 1-h}\ \sup_{0<|t-s|\leq h}|X(t)-X(s)|>0.
\end{equation}

Recall that a Borel function $m:(0,\varepsilon)\to (0,+\infty)$, where $\varepsilon>0$, is \emph{regularly varying at $0$ with index $\kappa\in\mathbb{R}$} if 
\[\lim_{h\to 0}\frac{m(\lambda h)}{m(h)}=\lambda^\kappa,\qquad \lambda>0,\]
and that $m$ is said to be \emph{slowly varying at $0$} if it is regularly varying at $0$ with index $\kappa=0$.  See for instance \cite[Definition 1.1 and remarks following it]{Seneta76}. A function $m$ is regularly varying at $0$ with index $\kappa$ if $m(h)=h^\kappa\ell(h)$ with some $\ell$ slowly varying at $0$. 

Now a suitable condition, weaker than but still in the spirit of upper and lower bounds on the velocity, could be to request that $X$ has moduli $m_+$ and $m_-\leq m_+$ as in (\ref{E:uppermod}) and (\ref{E:lowermod}) with the same index $\kappa>0$ of regular variation. Since we are interested in non-Lipschitz curves, we could assume that $\kappa<1$.


\begin{examples}\label{Ex:Brownian}\mbox{}
\begin{enumerate}
\item[(i)] Assouad's Koch curve $X$ in \eqref{E:Assouad} satisfies (\ref{E:uppermod}) and (\ref{E:lowermod}) with $m_+(h)=m_-(h)=h^\gamma$.
\item[(ii)] More generic examples for non-Lipschitz curves $X$ admitting moduli $m_+$ and $m_-$ with the same index are typical paths of Brownian motions: For typical paths the simultaneous validity of conditions (\ref{E:uppermod}) and (\ref{E:lowermod}) with
$m_+(h)=h^{1/2}(-\log h)^{1/2}$ and $m_-(h)=h^{1/2}(-\log h)^{-1/2}$ 
is ensured by L\'evy's theorem on the modulus of continuity respectively Cs\"org\Humlaut{o}-R\'ev\'esz' theorem on the modulus of non-differentiability, see \cite{CsorgoRevesz79, Levy37} or \cite[Theorems 1.1.1 and 1.6.1]{CsorgoRevesz81}. Obviously $\kappa=\frac12$ in this case. Analogous results hold for fractional Brownian motions with Hurst index $H\in (0,1)$; in this case
$m_+(h)=h^{H}(-\log h)^{1/2}$ and  $m_-(h)=h^{H}(-\log h)^{-H}$, 
so that $\kappa=H$. See for instance \cite[Theorem 7.6.8]{MarcusRosen06} for the modulus of continuity and the recent \cite[Theorem 1.2]{WangXiao19} for the modulus of non-differentiability. 
\end{enumerate}
\end{examples}

\begin{remark}\mbox{}
		\begin{enumerate}
			\item[(i)] Condition \eqref{E:uppermod} holds for many stochastic processes. Indeed, suppose that $X=(X(t))_{t\geq 0}$ is a stochastic process $X:[0,+\infty)\times \Omega\to \mathbb{R}$ over some probability space $(\Omega,\mathcal{F},\mathbb{P})$ such that $\mathbb{E}|X(t)-X(s)|^2 \leq c|t-s|^{2H}$, $0\leq s<t<+\infty$, with constants $c>0$ and $H\in(0,1)$ and having the hypercontractivity property 
			\[\mathbb{E}|X(t)-X(s)|^p \leq c_0^p p^{\iota p} \left[\mathbb{E}|X(t)-X(s)|^2\right]^{p/2},\qquad 0\leq s<t<+\infty,\]
			for all $p\geq 1$, where $c_0>0$ and $\iota \geq 0$ are constants independent of $p$. Then \eqref{E:uppermod} holds with 
			$m_+(h) = h^H\left(-\log h\right)^\iota$,
			see \cite[Corollary 2.11]{Nummi-Viitasaari}. These properties hold for a rich class of Gaussian processes, for processes living in a fixed Wiener chaos (such as Hermite processes) and for solutions to SDEs driven by fractional Brownian motion. 
			\item[(ii)] Condition \eqref{E:lowermod} can often be shown using the upper regularity of the local time. See for instance \cite[Corollary 2.6]{Sonmez-Sottinen-V}, which states that in many cases of interest one can choose
			$m_-(h) = h^{H}(-\log h)^{-H(1+\theta)}$
			with suitable $\theta$. Such cases include locally non-deterministic Gaussian processes satisfying $\mathbb{E}|X(t)-X(s)|^2 \leq c|t-s|^{2H}$, the Rosenblatt process of order $H$, and solutions of SDEs driven by a fractional Brownian motion; see \cite[Theorem 2.7 and Propositions 2.10 and 2.11]{Sonmez-Sottinen-V}.
		\end{enumerate}
	\end{remark}

The condition on a curve to have moduli $m_+$ and $m_-$ of the same order $\kappa$ seems too fragile to be used in variational arguments. A first pragmatic decision is to give up some precision and replace (\ref{E:uppermod}) by a conventional H\"older condition as in (\ref{E:Hoelder}). If the curve $X$ is $\gamma$-H\"older continuous, then it satisfies (\ref{E:uppermod}) with any modulus $m_+$ of order $\kappa_+<\gamma$. A second pragmatic decision is to relax (\ref{E:lowermod}) by instead requiring the boundedness of the $\alpha$-Riesz potential $U^\alpha\mu_X$ as in \eqref{E:Rieszpot}. This ensures (\ref{E:lowermod}) for any modulus $m_-$ with index $\kappa_->\frac{1}{n-\alpha}$.

\begin{lemma}\label{L:bdpot} Let $X:[0,1]\to\mathbb{R}^n$ be a  curve and let $0<\alpha<n$. If 
\begin{equation}\label{E:bdpot}
\sup_{x\in X([0,1])} U^\alpha \mu_X(x)<+\infty,
\end{equation}
then (\ref{E:lowermod}) holds for any regularly varying $m_-$ with $m_-(0)=0$ and index $\frac{1}{n-\alpha}<\kappa_-$.
\end{lemma}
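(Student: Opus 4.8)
The plan is to argue by contrapositive: assume the lower modulus condition \eqref{E:lowermod} fails for some regularly varying $m_-$ with index $\kappa_- > \frac{1}{n-\alpha}$, and deduce that the potential $U^\alpha\mu_X$ is unbounded on $X([0,1])$. Failure of \eqref{E:lowermod} means there is a sequence $h_j \downarrow 0$ and points $t_j \in [h_j, 1-h_j]$ such that
\[
\sup_{0<|t_j-s|\le h_j} |X(t_j)-X(s)| \le \varepsilon_j\, m_-(h_j), \qquad \varepsilon_j \to 0.
\]
In other words, the entire arc $X([t_j-h_j, t_j+h_j])$ is contained in the ball $B(X(t_j), \varepsilon_j m_-(h_j))$. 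The key consequence is a lower bound on how much occupation mass sits near the point $x_j := X(t_j)$: for any radius $r \ge \varepsilon_j m_-(h_j)$,
\[
\mu_X\bigl(B(x_j, r)\bigr) \ge \mathcal{L}^1\bigl([t_j-h_j, t_j+h_j] \cap [0,1]\bigr) \ge h_j,
\]
since $t_j$ is bounded away from the endpoints by $h_j$. (One can take the one-sided interval if $t_j$ is near an endpoint; either way the bound is $\ge h_j$.)

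Next I would estimate $U^\alpha\mu_X(x_j) = \int_{\mathbb{R}^n} |x_j - y|^{\alpha-n}\,\mu_X(dy)$ from below using this mass concentration. Split the integral over the annuli $\{r_{m+1} \le |x_j-y| < r_m\}$ for a dyadic sequence of radii, or more simply use the layer-cake / Fubini identity
\[
U^\alpha\mu_X(x_j) = (n-\alpha)\int_0^\infty \mu_X\bigl(B(x_j,r)\bigr)\, r^{\alpha-n-1}\, dr \ \ge\ (n-\alpha)\,h_j \int_{\varepsilon_j m_-(h_j)}^{1} r^{\alpha-n-1}\,dr.
\]
Since $\alpha - n < 0$, the integral $\int_{\delta}^{1} r^{\alpha-n-1}\,dr$ behaves like $\frac{1}{n-\alpha}\delta^{\alpha-n}$ as $\delta \to 0$, so
\[
U^\alpha\mu_X(x_j) \ \gtrsim\ h_j\,\bigl(\varepsilon_j\, m_-(h_j)\bigr)^{\alpha-n} \ =\ \varepsilon_j^{\alpha-n}\, h_j\, m_-(h_j)^{\alpha-n}.
\]
Now $m_-$ is regularly varying at $0$ with index $\kappa_- > \frac{1}{n-\alpha}$, so $m_-(h)^{\alpha-n} = h^{(\alpha-n)\kappa_-}\ell(h)$ with $\ell$ slowly varying, and $h\, m_-(h)^{\alpha-n} = h^{1-(n-\alpha)\kappa_-}\ell(h)$. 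Because $1 - (n-\alpha)\kappa_- < 0$ and slowly varying functions are subpolynomial, $h_j\, m_-(h_j)^{\alpha-n} \to +\infty$ as $j \to \infty$. Since also $\varepsilon_j^{\alpha-n} \to +\infty$ (as $\varepsilon_j \to 0$ and $\alpha - n < 0$), we conclude $U^\alpha\mu_X(x_j) \to +\infty$ with $x_j \in X([0,1])$, contradicting \eqref{E:bdpot}. This proves the lemma.

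The main technical point to get right is the layer-cake lower bound for $U^\alpha\mu_X(x_j)$ and the verification that the single annulus (or the tail integral from $\varepsilon_j m_-(h_j)$ to a fixed constant) already forces divergence — one must be careful that the lower cutoff $\varepsilon_j m_-(h_j)$ genuinely goes to $0$, which it does since $m_-(h_j)\to 0$ and $\varepsilon_j \le 1$ eventually. A secondary point is handling the boundary cases $t_j$ near $0$ or $1$: the infimum in \eqref{E:lowermod} is over $h \le t \le 1-h$, so the chosen $t_j$ automatically lies in that range and the symmetric interval $[t_j - h_j, t_j+h_j]$ may stick out, but intersecting with $[0,1]$ still leaves length $\ge h_j$, which is all that is needed. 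The regular-variation bookkeeping (that $h^{1-(n-\alpha)\kappa_-}\ell(h)\to\infty$) is standard from the Karamata theory referenced via \cite{Seneta76}.
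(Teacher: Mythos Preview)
Your proof is correct and follows essentially the same contrapositive route as the paper: both arguments locate points $t_j$ where the oscillation over $[t_j-h_j,t_j+h_j]$ is at most $\varepsilon\, m_-(h_j)$, and then show that $U^\alpha\mu_X(X(t_j))$ must blow up because an interval of length $\sim h_j$ maps into a ball of radius $\sim \varepsilon\, m_-(h_j)$, forcing a lower bound of order $h_j\,(\varepsilon\, m_-(h_j))^{\alpha-n}\to\infty$. The only cosmetic differences are that you package the lower bound via the layer-cake identity $U^\alpha\mu_X(x)=(n-\alpha)\int_0^\infty \mu_X(B(x,r))\,r^{\alpha-n-1}\,dr$ while the paper simply bounds the integrand $|X(t_j)-X(s)|^{\alpha-n}\ge (\varepsilon\, m_-(h_j))^{\alpha-n}$ directly on $[t_j-h_j,t_j+h_j]$, and you let $\varepsilon_j\to 0$ whereas the paper keeps $\varepsilon$ fixed (the divergence of $h_j\,m_-(h_j)^{\alpha-n}$ already suffices, so the extra factor $\varepsilon_j^{\alpha-n}\to\infty$ is not needed). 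Incidentally, since $t_j\in[h_j,1-h_j]$ the interval $[t_j-h_j,t_j+h_j]$ lies entirely in $[0,1]$, so your boundary caveat is unnecessary and you can take $\ge 2h_j$ rather than $\ge h_j$.
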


\begin{remark}\mbox{}
\begin{enumerate}
\item[(i)] Condition \eqref{E:bdpot} implies that $I^\alpha(\mu_X)<+\infty$, but it is strictly stronger.
\item[(ii)] Taking the supremum in (\ref{E:bdpot}) over $\mathbb{R}^n$ instead of $X([0,1])$ does not change the condition \cite[Section I.3, Theorem 1.5]{Landkof}.
\end{enumerate}
\end{remark}

\begin{remark}\label{R:Brownian}\mbox{}
\begin{enumerate}
\item[(i)] Obviously \eqref{E:bdpot} holds for Assouad's Koch curve $X$ in \eqref{E:Assouad} if $\frac{1}{n-\alpha}<\gamma$.
\item[(ii)] Let $n\geq 3$, $k=1$, $n-2<\alpha<n$ and $0<\gamma<\frac12$. Typical Brownian paths $t\mapsto B(t,\omega)$ are $\gamma$-H\"older continuous and satisfy 
$\sup_{x\in B(\cdot,\omega)([0,1])}U^{\alpha}\mu_{B(\cdot,\omega)}(x)<+\infty$.
The latter condition can be seen using the known fact that 
\[\limsup_{r\to 0}\sup_{x\in B([0,1])}\frac{\mu_{B}(B(x,r))}{r^2|\log r|}<c\quad \text{a.s.}\]
with a deterministic constant $c>0$, proved in \cite[Lemma 2.3]{PerkinsTaylor87} and in a refined version in \cite[Theorem 1.3]{Demboetal00}. Such a result is extended to cover the case of stable-like processes, see \cite[Theorem 1.5]{Seuret-Yang}. See also \cite[Theorem 1.1]{Lin-Wang} for the case of Brownian sheets.
\end{enumerate}
\end{remark}

\begin{proof}[Proof of Lemma \ref{L:bdpot}]
Let $\kappa_-$ be as stated. We may assume that $m_-(h)=h^{\kappa_-}\ell(h)$ with $\ell$ slowly varying at $0$. Suppose that (\ref{E:lowermod}) does not hold, 
\[\liminf_{h\to 0}\frac{1}{m_-(h)}\inf_{h\leq t\leq 1-h}\ \sup_{0<|t-s|\leq h}|X(t)-X(s)|=0.\]
Then, given $\varepsilon>0$, we can find a sequence $(h_j)_j$ converging to zero as $j\to\infty$ such that for all sufficiently large $j$ we have 
\[\inf_{h_j\leq t\leq 1-h_j}\sup_{0<|t-s|\leq h_j}|X(t)-X(s)|<\varepsilon h_j^{\kappa_-} \ell(h_j).\]
This gives 
\begin{align}
\sup_{h_j\leq t\leq 1-h_j}\int_{t-h_j}^{t+h_j}|X(t)-X(s)|^{\alpha-n}ds &\geq 2h_j\sup_{h_j\leq t\leq 1-h_j}\inf_{0<|t-s|\leq h_j}|X(t)-X(s)|^{\alpha-n}\notag\\
&> 2\varepsilon^{\alpha-n}h_j^{\kappa_-(\alpha-n)+1}\ell(h_j)^{\alpha-n}.\notag
\end{align}
Since the right-hand side goes to $+\infty$ as $j\to\infty$ and the left-hand side is bounded above by 
\[\sup_{x\in X([0,1])} U^\alpha \mu_X(x)\geq\sup_{t\in [0,1]}\int_0^1 |X(t)-X(s)|^{\alpha-n}ds,\]
this contradicts (\ref{E:bdpot}).
\end{proof}

These considerations suggest to look at curves $X$ satisfying (\ref{E:bdpot}) and being $\gamma$-H\"older continuous with $0<\gamma<\frac{1}{n-\alpha}$. In contrast to the situation in Examples \ref{Ex:Brownian}, this would no longer give \eqref{E:uppermod} and \eqref{E:lowermod} with 
moduli $m_-$ and $m_+$ having the same index $\kappa$. Instead, \eqref{E:uppermod} and \eqref{E:lowermod} would be satisfied with any moduli $m_-$ and $m_+$ having indices $\kappa_+$ and $\kappa_-$ such that $0<\kappa_+< \gamma< \frac{1}{n-\alpha}<\kappa_-$. Fixing $\gamma$ and $\frac{1}{n-\alpha}$ could be considered as fixing a \enquote{velocity band} (or window) for such curves and, in view of Examples \ref{Ex:Brownian} and Remark \ref{R:Brownian}, as a coarse way of prescribing a behavior remotely similar to that of typical stochastic process paths.

\section{Doubly constrained minimal interaction}\label{S:mininteract}

As before, let $\nu$ be a Borel probability measure on $\mathbb{R}^n$. Motivated by the preceding section, we aim to minimize $X\mapsto I^\alpha(\mu_X,\nu)$
under a H\"older condition (\ref{E:Hoelder}) and an additional
bound on $U^\alpha \mu_X$.
Given $\varrho>0$ and $M>0$, let 
\[\mathcal{K}(\gamma,\alpha,\varrho,M)=\Big\lbrace X\in \mathcal{B}^\gamma_\varrho:\quad \sup_{x\in \supp\nu} U^\alpha\mu_X(x)<M\Big\rbrace.\]

\begin{theorem}\label{T:mininteraction}
Let $0<\alpha\leq n$, $0<\gamma<\frac{k}{n-\alpha}\wedge 1$ and $\varrho>0$. Let $\nu$ be a Borel probability measure on $\mathbb{R}^n$ with compact support and $M>0$.
\begin{enumerate}
\item[(i)] If $\mathcal{K}(\gamma,\alpha,\varrho,M)\neq \emptyset$, then there is some $X^\ast\in \mathcal{K}(\gamma,\alpha,\varrho,M)$ minimizing $X\mapsto I^\alpha(\mu_X,\nu)$ over $\mathcal{K}(\gamma,\alpha,\varrho,M)$.
\item[(ii)] For any minimizer $X^\ast$ of $X\mapsto I^\alpha(\mu_X,\nu)$ in $\mathcal{K}(\gamma,\alpha,\varrho, M)$ and any rectangle $\mathcal{R}\subset [0,1]^k$ the image $X^\ast(\mathcal{R})$ satisfies (\ref{E:measrange}) and (\ref{E:dimrange}). 
\item[(iii)] There are $\varrho_1>0$ and $M_1>0$, depending on $\gamma$, $\alpha$ and $\nu$ such that $\mathcal{K}(\gamma,\alpha,\varrho,M)\neq \emptyset$ whenever $\varrho>\varrho_1$ and $M>M_1$. 
\end{enumerate}
\end{theorem}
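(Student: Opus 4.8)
\emph{Part (i).} The argument is the direct method, exactly as in Theorem~\ref{T:minselfinteraction}~(i). First observe that, $\nu$ being a probability measure, every $X\in\mathcal{K}(\gamma,\alpha,\varrho,M)$ satisfies $I^\alpha(\mu_X,\nu)=\int_{\supp\nu}U^\alpha\mu_X(x)\,\nu(dx)\le M$, so the hypothesis $\mathcal{K}(\gamma,\alpha,\varrho,M)\neq\emptyset$ gives $m:=\inf_{X\in\mathcal{K}(\gamma,\alpha,\varrho,M)}I^\alpha(\mu_X,\nu)<+\infty$. Pick a minimizing sequence $(X_i)_i\subset\mathcal{K}(\gamma,\alpha,\varrho,M)$. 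By Lemma~\ref{L:compact} a subsequence $(X_{i_j})_j$ converges uniformly to some $X^\ast\in\mathcal{B}^\gamma_\varrho$ with $\mu_{X_{i_j}}\to\mu_{X^\ast}$ weakly. Two semicontinuity facts finish the proof: for each fixed $x$ the function $z\mapsto k_\alpha(z-x)$ is nonnegative and lower semicontinuous, so by the Portmanteau theorem (equivalently by Fatou applied to $t\mapsto|X_{i_j}(t)-x|^{\alpha-n}$) $U^\alpha\mu_{X^\ast}(x)\le\liminf_j U^\alpha\mu_{X_{i_j}}(x)\le M$ for all $x\in\supp\nu$; and $g:=U^\alpha\nu$ is nonnegative and lower semicontinuous, so $I^\alpha(\mu_{X^\ast},\nu)=\int g\,d\mu_{X^\ast}\le\liminf_j\int g\,d\mu_{X_{i_j}}=m$. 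Hence $X^\ast$ is a minimizer. The one subtlety is that $\mathcal{K}$ is defined by the \emph{strict} bound $\sup_{\supp\nu}U^\alpha\mu_X<M$, whereas the limit only inherits the non-strict bound $\le M$; one treats the defining inequality as non-strict, observing in addition that $X^\ast$ minimizes $I^\alpha(\cdot,\nu)$ over the closed set $\{X\in\mathcal{B}^\gamma_\varrho:\sup_{\supp\nu}U^\alpha\mu_X\le M\}$, which contains $\mathcal{K}(\gamma,\alpha,\varrho,M)$.

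\emph{Part (ii).} This is proved as Theorem~\ref{T:minselfinteraction}~(ii), cf.\ Remark~\ref{R:trivia}~(ii) and \cite[Proposition~2.3 and Theorem~4.13]{Falconer}. The upper bounds $\mathcal{H}^{\frac{k}{\gamma}\wedge n}(X^\ast(\mathcal{R}))<+\infty$ and $\dim_H X^\ast(\mathcal{R})\le\frac{k}{\gamma}\wedge n$ hold for every $X^\ast\in\mathcal{B}^\gamma_\varrho$, because a $\gamma$-Hölder map sends a rectangle $\mathcal{R}\subset[0,1]^k$ to a set of finite $\mathcal{H}^{k/\gamma}$-measure. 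For the lower bounds $\mathcal{H}^{n-\alpha}(X^\ast(\mathcal{R}))=+\infty$ and $\dim_H X^\ast(\mathcal{R})\ge n-\alpha$ one applies the energy method to the occupation measure $\mu_{X^\ast|\mathcal{R}}$ of $X^\ast|_{\mathcal{R}}$: it is a nonzero measure carried by $X^\ast(\mathcal{R})$ with $(n-\alpha)$-energy $\int_{\mathcal{R}}\int_{\mathcal{R}}|X^\ast(t)-X^\ast(s)|^{\alpha-n}\,ds\,dt\le I^\alpha(\mu_{X^\ast})$, so it suffices to know $I^\alpha(\mu_{X^\ast})<+\infty$. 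Unlike in Theorem~\ref{T:minselfinteraction}, this finiteness is not part of the minimized functional, and I would deduce it from minimality: if $I^\alpha(\mu_{X^\ast})=+\infty$, then $U^\alpha\mu_{X^\ast}=+\infty$ on a set of positive $\mu_{X^\ast}$-mass, forcing $\mu_{X^\ast}$ to over-concentrate near some $z\in X^\ast([0,1]^k)\setminus\supp\nu$ (points of $\supp\nu$ are excluded by the bound defining $\mathcal{K}$); redistributing this local excess along a descent direction of $g=U^\alpha\nu$ — which exists since the Riesz potential $g$ is lower semicontinuous and has no interior local minimum — yields an admissible competitor with strictly smaller $I^\alpha(\cdot,\nu)$, a contradiction.

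\emph{Part (iii).} Here I would rescale a fractional Brownian field, as in the proof of Lemma~\ref{L:Ialphafinite}, but now additionally exploiting the boundedness of its Riesz potential. Fix $H$ with $\gamma<H<\frac{k}{n-\alpha}$ and let $B^H$ be a fractional Brownian $(k,n)$-field with Hurst index $H$. By the Kolmogorov--Chentsov theorem \cite{Kunita90} (as in the proof of Lemma~\ref{L:Ialphafinite}) there are a modification and a full-measure event on which $B^H(0,\omega)=0$ and $\|B^H(\cdot,\omega)\|_{\mathcal{C}_0^\gamma}\le K(\omega)<+\infty$; and, since $n-\alpha<k/H$, Lemma~\ref{L:BermanPitt} provides a full-measure event on which $S(\omega):=\sup_{x\in\mathbb{R}^n}U^\alpha\mu_{B^H(\cdot,\omega)}(x)<+\infty$. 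Fix $\omega_0$ in the intersection and set $M_1:=1$ and $\varrho_1:=(1+K(\omega_0))\,S(\omega_0)^{1/(n-\alpha)}$. For $\varrho>\varrho_1$ let $\lambda:=\varrho/(1+K(\omega_0))$ and $X(t):=\lambda\,B^H(t,\omega_0)$. Then $X(0)=0$ and $\|X\|_{\mathcal{C}_0^\gamma}\le\lambda K(\omega_0)<\varrho$, while the homogeneity $k_\alpha(\lambda\cdot)=\lambda^{\alpha-n}k_\alpha(\cdot)$ gives
\[
U^\alpha\mu_X(x)=\lambda^{\alpha-n}\,U^\alpha\mu_{B^H(\cdot,\omega_0)}(x/\lambda)\le\lambda^{\alpha-n}S(\omega_0)<1\qquad\text{for all }x\in\mathbb{R}^n,
\]
the last inequality because $\varrho>\varrho_1$ is equivalent to $\lambda^{n-\alpha}>S(\omega_0)$. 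Hence for $\varrho>\varrho_1$ and $M>M_1$ we get $\sup_{x\in\supp\nu}U^\alpha\mu_X(x)<1<M$, so $X\in\mathcal{K}(\gamma,\alpha,\varrho,M)$. (Localizing the potential bound to a neighbourhood of $\supp\nu$ would produce constants genuinely depending on $\nu$, but this is not needed.)

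\emph{Main obstacle.} The only substantial ingredient is the a.s.\ boundedness of $x\mapsto U^\alpha\mu_{B^H(\cdot,\omega)}(x)$ used in part (iii) — that is, Lemma~\ref{L:BermanPitt}, the technical heart of the paper. Granting it, part (iii) is routine scaling and part (i) is the textbook direct method; the only point deserving attention is the finiteness of $I^\alpha(\mu_{X^\ast})$ needed in part (ii), handled by the minimality argument sketched above.
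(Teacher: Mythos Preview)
Your parts (i) and (iii) match the paper's approach. For (i) the paper applies the direct method via Lemma~\ref{L:classiclsc} and the pointwise lower semicontinuity \eqref{E:potlsc}, exactly as you do; it silently works with the closure $\{X\in\mathcal{B}^\gamma_\varrho:\sup_{\supp\nu}U^\alpha\mu_X\le M\}$, so your remark about strict versus non-strict inequalities is accurate and the paper treats it in the same way. For (iii) the paper fixes $\gamma<H<k/(n-\alpha)$, invokes Lemma~\ref{L:BermanPitt}~(i) to obtain a realization $B^H(\cdot,\omega)$ that is simultaneously $\gamma$-H\"older and has bounded $U^\alpha$-potential, and takes $\varrho_1$, $M_1$ from that realization; your additional rescaling to force $M_1=1$ is correct but not needed.

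The gap is in part (ii). You rightly observe that the lower bounds in \eqref{E:measrange}--\eqref{E:dimrange} rest on $I^\alpha(\mu_{X^\ast})<+\infty$, and that this is \emph{not} automatic from the constraint $\sup_{x\in\supp\nu}U^\alpha\mu_{X^\ast}(x)\le M$: that bound only controls $\mu_{X^\ast}(B(x,r))$ for $x\in\supp\nu$, and an element of $\mathcal{K}$ may have a rectifiable image away from $\supp\nu$ (for instance $n=2$, $\alpha=1$, $k=1$, $\nu=\delta_0$, $X(t)=(t^{1/2},0)$ has $U^1\mu_X(0)=2$ yet $\mathcal{H}^1(X([0,1]))=1$). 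Your attempted repair through minimality, however, does not work as written. The ``redistribution along a descent direction of $U^\alpha\nu$'' would have to be realized as the occupation measure of an explicit competitor $\tilde X\in\mathcal{B}^\gamma_\varrho$ still satisfying the potential bound on $\supp\nu$, and you give no such construction; the H\"older constraint makes local surgery on $X^\ast$ genuinely delicate. Worse, concentration of $\mu_{X^\ast}$ near a point $z$ far from $\supp\nu$ is \emph{favorable} for the functional $I^\alpha(\cdot,\nu)=\int U^\alpha\nu\,d\mu_X$, since $U^\alpha\nu$ is small there---so minimality gives no leverage against it. (Also, $U^\alpha\nu$ need not satisfy a minimum principle when $\alpha>2$.) The paper itself writes only ``Item (ii) is as before'' and, in the subsequent remark, asserts that the conclusion holds for \emph{every} element of $\mathcal{K}$; your example shows that assertion cannot be literally true, so what you have found is a genuine rough spot in the paper rather than a misreading on your part.
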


For $0<\gamma<\frac{k}{2(n-\alpha)}\wedge 1$ we can provide explicit expressions for possible $\varrho_1$ and $M_1$ in Theorem \ref{T:mininteraction} (iii), see Remark \ref{R:explicit} below.

\begin{remark}\mbox{}
\begin{enumerate} 
\item[(i)] Theorem \ref{T:mininteraction} is interesting if the supports of $\nu$ and $\mu_X$ intersect. This happens, for instance, if $0\in \supp\nu$, as in Examples \ref{Ex:interval} and \ref{Ex:pointmass}. Since \eqref{E:contcasesupport} implies that $\supp\mu_X\subset \overline{B}(0,\varrho k^{\gamma/2})$ for all $X\in \mathcal{B}^\gamma_\varrho$; it is no loss to assume that $\supp \nu$ is compact.
\item[(ii)] In the case $k=1$ Theorem \ref{T:mininteraction} says that within the class $\mathcal{K}(\gamma,\alpha,\varrho,M)$ of H\"older curves \enquote{whose velocity does not vary too wildly} there are curves minimizing the mutual interaction energy with the given medium $\nu$.
\item[(iii)] Clearly the geometric properties of $X^\ast(\mathcal{R})$ stated in Theorem \ref{T:mininteraction} (ii) are consequences of the constraints and valid for all elements of $\mathcal{K}(\gamma,\alpha,\varrho, M)$.
\end{enumerate}
\end{remark}



The existence of minimizers in Theorem \ref{T:mininteraction} (i) follows by the direct method, see for instance \cite[Lemma 4.3.1 and Theorem 4.3.1]{JostLiJost}. We quote a simple special case of a well-known result.

\begin{lemma}\label{L:classiclsc}
Let $E\subset \mathbb{R}^k$ be a Borel set with $\mathcal{L}^k(E)>0$ and let $1\leq q<+\infty$.
\begin{enumerate}
\item[(i)] If $f:\mathbb{R}^n\to [0,+\infty]$ is lower semicontinuous, then $\Phi(u):=\int_Ef(u(x))\:dx$
defines a lower semicontinuous functional $\Phi:L^q(E,\mathbb{R}^n)\to [0,+\infty]$.
\item[(ii)] If $D$ is a compact subset of $L^q(E,\mathbb{R}^n)$, $f$ and $\Phi$ are as in (i) and 
$\inf_{u\in D}\Phi(u)<+\infty$,
then there is some $u^\ast\in D$ such that $\Phi(u^\ast)=\min_{u\in D}\Phi(u)$. 
\end{enumerate}
\end{lemma}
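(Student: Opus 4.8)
\textit{Proof plan.} The plan is to derive (i) from the standard trinity of subsequence extraction, pointwise lower semicontinuity of $f$, and Fatou's lemma, and then to obtain (ii) by the usual direct-method argument.

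First I would note that the integrand is well behaved: since $f$ is lower semicontinuous it is Borel measurable, so $f\circ u$ is Lebesgue measurable for any measurable representative of a class $u\in L^q(E,\mathbb{R}^n)$, and replacing the representative by another one changes $f\circ u$ only on an $\mathcal{L}^k$-null set; as $f\geq 0$, the quantity $\Phi(u)=\int_E f(u(x))\,dx\in[0,+\infty]$ is thus well defined on $L^q(E,\mathbb{R}^n)$. For (i), take $u_j\to u$ in $L^q(E,\mathbb{R}^n)$; to prove $\liminf_j\Phi(u_j)\geq\Phi(u)$ I would first pass to a subsequence $(u_{j_l})_l$ along which $\Phi(u_{j_l})\to\liminf_j\Phi(u_j)$, and then, using that convergence in $L^q$ forces $\mathcal{L}^k$-a.e.\ convergence along a further subsequence, extract $(u_{j_{l_k}})_k$ with $u_{j_{l_k}}\to u$ $\mathcal{L}^k$-a.e.\ on $E$. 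Lower semicontinuity of $f$ gives, for $\mathcal{L}^k$-a.e.\ $x\in E$, the inequality $f(u(x))\leq\liminf_k f(u_{j_{l_k}}(x))$, and integrating and applying Fatou's lemma (legitimate because $f\geq 0$) yields
\[
\Phi(u)=\int_E f(u(x))\,dx\leq\int_E\liminf_k f(u_{j_{l_k}}(x))\,dx\leq\liminf_k\int_E f(u_{j_{l_k}}(x))\,dx=\liminf_j\Phi(u_j).
\]

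For (ii), set $m:=\inf_{u\in D}\Phi(u)$, which is finite by assumption, and pick a minimizing sequence $(u_j)_j\subset D$ with $\Phi(u_j)\to m$. Compactness of $D$ in $L^q(E,\mathbb{R}^n)$ yields a subsequence $u_{j_k}\to u^\ast$ in $L^q$ with $u^\ast\in D$, and part (i) applied to this subsequence gives $\Phi(u^\ast)\leq\liminf_k\Phi(u_{j_k})=m$; since $u^\ast\in D$ also $\Phi(u^\ast)\geq m$, so $\Phi(u^\ast)=m=\min_{u\in D}\Phi(u)$.

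I do not expect a genuine obstacle here; the only points that need care are that $f$ is merely lower semicontinuous, so one must work with the pointwise $\liminf$ inequality along an a.e.-convergent subsequence rather than a pointwise limit, and that the double subsequence extraction is essential precisely because $L^q$-convergence delivers a.e.-convergence only after passing to a subsequence.
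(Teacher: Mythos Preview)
Your proof is correct and follows essentially the same approach as the paper: Fatou's lemma together with a.e.-convergent subsequences for (i), and the direct method for (ii). The paper's proof is more terse (it merely says ``Fatou's lemma gives (i)''), while you spell out the double subsequence extraction explicitly, but the substance is identical.
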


For completeness we briefly recall the standard proof.
\begin{proof}
Fatou's lemma gives (i). To see (ii), let $(u_i)_i\subset D$ be such that $\lim_{i\to\infty} \Phi(u_i)=\inf_{u\in D}\Phi(u)\geq 0$. There is a subsequence $(u_{i_j})_j$ converging to some $u^\ast\in D$ in the norm of $L^q(E,\mathbb{R}^n)$. We may assume that $\lim_{j\to\infty} u_{i_j}(x)=u^\ast(x)$ at $\mathcal{L}^k$-a.a. $x\in E$, otherwise pass to another subsequence. Now $\inf_{u\in D}\Phi(u)\leq \Phi(u^\ast)\leq \liminf_{j\to \infty}\Phi(u_{i_j})=\lim_{i\to\infty} \Phi(u_i)=\inf_{u\in D}\Phi(u)$ by (i).
\end{proof}


\begin{proof}[Proof of Theorem \ref{T:mininteraction}] We will write $\mathcal{K}:=\mathcal{K}(\gamma,\alpha,\varrho,M)$ to shorten notation.
The function $y\mapsto U^\alpha\nu(y)$ is nonnegative and lower semicontinuous \cite[7. in Section I.3]{Landkof} on $\mathbb{R}^n$. By Lemma \ref{L:classiclsc} (i) the functional 
$X\mapsto I^\alpha(\mu_X,\nu)=\int_{[0,1]^k}U^\alpha(X(t))dt$
is lower semicontinuous on $L^1([0,1]^k,\mathbb{R}^n)$. Moreover, $I^\alpha(\mu_X,\nu)\leq M$ 
for any $X \in \mathcal{K}$. To see that $\mathcal{K}$ is a compact subset of $L^1([0,1]^k,\mathbb{R}^n)$ it suffices, in view of the Arzel\`a-Ascoli theorem and Lemma \ref{L:compact}, to recall that potentials, evaluated at a fixed point, are lower semicontinuous with respect to the weak convergence of measures \cite[p. 62]{Landkof}. For a sequence $(X_j)_j\subset \mathcal{K}$ with uniform limit $X_\infty$ and such that $\lim_j \mu_{X_j}=\mu_{X_\infty}$ weakly, we have 
\begin{equation}\label{E:potlsc}
U^\alpha\mu_{X_\infty}(x)\leq \liminf_{j\to\infty} U^\alpha\mu_{X_j}(x)\leq M,\qquad x\in \supp\nu.
\end{equation}
Since $\mathcal{K}\neq \emptyset$, we have $\inf_{X\in \mathcal{K}}I^\alpha(\mu_X,\nu)<+\infty$, so that Lemma \ref{L:classiclsc} gives item (i) in Theorem \ref{T:mininteraction}. Item (ii) is as before. Under the additional condition \eqref{E:Assouadcond} item (iii) is immediate from \eqref{E:Assouad}. In the general case it follows from an application of
Lemma \ref{L:BermanPitt} (i) below to a fractional Brownian $(k,n)$-field $B^H$ with Hurst index $\gamma<H<\frac{k}{n-\alpha}$.
\end{proof}

\begin{remark}\label{R:explicit}\mbox{}
\begin{enumerate}
\item[(i)] In the case $0<\gamma<\frac{k}{2(n-\alpha)}\wedge 1$ one can find explicit expressions for possible $\varrho_1$ and $M_1$ in Theorem \ref{T:mininteraction} (iii): Consider a fractional Brownian $(k,n)$-field $B^H$ with Hurst index $\gamma<H<\frac{k}{n-\alpha}$, choose an even integer $\ell\geq 2$ large enough to have $\gamma<H-\frac{2k}{\ell}$ and apply Lemma \ref{L:Bermanexplicit} (ii). Then the right-hand side of \eqref{E:M1} is a possible choice for $M_1$ and $k^{(H-\gamma)/2-k/\ell}$ times the right-hand side of \eqref{E:rho1} is a possible choice for $\varrho_1$.
\item[(ii)] If \eqref{E:Assouadcond} holds, possible values $\varrho_1$ and $M_1$ could be extracted from \cite[Section 3]{Assouad83}.
\item[(iii)] In \cite[Section 3]{Hinz2021} an analog of \eqref{E:potlsc} was used, namely the preservation of an upper Ahlfors regularity condition under weak convergence. 
\end{enumerate}
\end{remark}

\begin{remark}\label{R:nonlin}
The same arguments give an immediate generalization of Theorem \ref{T:mininteraction} to functionals $X\mapsto \int_{\mathbb{R}^n}(U^\alpha\mu_X)^pd\nu$ with $1\leq p<+\infty$.
\end{remark}

\section{Potentials of Gaussian occupation measures}\label{S:occupot}

We prove results on the continuity and boundedness of potentials of occupation measures of Gaussian random fields. They are natural analogs of 
classical results for local times. In Lemma \ref{L:Bermanbasic} we state variants of some of Berman's results \cite{Berman69}, solely based on Fourier analysis. The main result of this section is Theorem \ref{T:Pitt} (i), which is parallel to Pitt's \cite[Theorem 4 and Proposition 7.3]{Pitt78}, based on local nondeterminism.

\begin{remark}
If a continuous function $X:[0,1]^k\to \mathbb{R}^n$ has local times $L_X\in L^p_{\loc}(\mathbb{R}^n)$ for some $1< p\leq \infty$ and $n-\alpha<\frac{n}{p'}$, where $\frac{1}{p}+\frac{1}{p'}=1$ with the agreement that $\frac{1}{\infty}:=0$, then $U^\alpha\mu_X$ is bounded. This follows using H\"older's inequality, related arguments are given in \cite[Proposition 4.14]{HTV2020}. In particular, the local boundedness of $L_X$ implies the boundedness of $U^\alpha\mu_X$ for all $0<\alpha<n$.
\end{remark}

By $C_b(\mathbb{R}^n)$ we denote the space of bounded continuous function on $\mathbb{R}^n$ and by $C_\infty(\mathbb{R}^n)$ the space of continuous functions on $\mathbb{R}^n$ vanishing at infinity. Given a Borel measure $\mu$ on $\mathbb{R}^n$, 
\begin{equation}\label{E:muhat}
\hat{\mu}(\xi):=\int_{\mathbb{R}^n} e^{i\left\langle x,\xi\right\rangle}\mu(dx),\qquad \xi\in \mathbb{R}^n,
\end{equation}
denotes its Fourier transform. For finite $\mu$ we clearly have $\hat{\mu}\in C_b(\mathbb{R}^n)$.

In \cite[Corollaries 2.3 and 3.2]{HTV2021-1} we had listed basic Sobolev regularity results for occupation measures $\mu_X$ and local times $L_X$. We complement these results by a straightforward generalization of \cite[Lemma 2.2]{Berman69}. 

\begin{lemma}
Let $\mu$ be a finite nonnegative Borel measure on $\mathbb{R}^n$.
\begin{enumerate}
\item[(i)] If $0< \alpha<n$ and $\int_{\mathbb{R}^n}|\hat{\mu}(\xi)||\xi|^{-\alpha}d\xi<+\infty$, then $U^\alpha \mu\in C_\infty(\mathbb{R}^n)$.
\item[(ii)] If $\int_{\mathbb{R}^n}|\hat{\mu}(\xi)|d\xi<+\infty$, then $\mu$ is absolutely continuous with respect to $\mathcal{L}^n$ with density $\frac{d\mu}{d\mathcal{L}^n}\in C_\infty(\mathbb{R}^n)$. 
\end{enumerate}
\end{lemma}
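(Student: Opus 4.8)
The plan is to prove both parts by Fourier inversion, in the spirit of \cite[Lemma 2.2]{Berman69}. For (ii): since $\hat{\mu}\in L^1(\mathbb{R}^n)$, put $f(x):=(2\pi)^{-n}\int_{\mathbb{R}^n}e^{-i\langle x,\xi\rangle}\hat{\mu}(\xi)\,d\xi$; dominated convergence makes $f$ continuous and the Riemann--Lebesgue lemma gives $f\in C_\infty(\mathbb{R}^n)$. It then remains to check that $\mu=f\,\mathcal{L}^n$. One can do this by injectivity of the Fourier transform on $\mathcal{S}'(\mathbb{R}^n)$ (both $\mu$ and $f\,\mathcal{L}^n$ have transform $\hat{\mu}$), or more concretely by convolving $\mu$ with a Gaussian approximate identity $(p_\varepsilon)_{\varepsilon>0}$: the density of $\mu*p_\varepsilon$ is $f_\varepsilon(x)=(2\pi)^{-n}\int e^{-i\langle x,\xi\rangle}\hat{\mu}(\xi)\widehat{p_\varepsilon}(\xi)\,d\xi$, which converges to $f$ uniformly as $\varepsilon\downarrow 0$ (dominated convergence, dominant $|\hat{\mu}|\in L^1$), while $\mu*p_\varepsilon\to\mu$ weakly. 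Either way $f\ge 0$, $\int f\,d\mathcal{L}^n=\mu(\mathbb{R}^n)<+\infty$, so $f\in C_\infty$ is the asserted density.

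For (i) I would additionally use the classical identity $\widehat{k_\alpha}=c_{n,\alpha}\,|\cdot|^{-\alpha}$ in $\mathcal{S}'(\mathbb{R}^n)$, with a constant $c_{n,\alpha}>0$ (see e.g.\ \cite{Landkof} or \cite{AH96}); note that $0<\alpha<n$ makes both $k_\alpha$ and $|\cdot|^{-\alpha}$ locally integrable. Since $\mu$ is finite, $U^\alpha\mu=k_\alpha*\mu$ belongs to $L^1_{\loc}(\mathbb{R}^n)$ and is finite $\mathcal{L}^n$-a.e.\ (Fubini, using $\sup_y\int_{B(0,R)}|x-y|^{\alpha-n}\,dx=cR^\alpha$). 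Testing against $\phi\in\mathcal{S}(\mathbb{R}^n)$, Fubini and evenness of $k_\alpha$ give $\int_{\mathbb{R}^n}U^\alpha\mu\,\phi\,d\mathcal{L}^n=\int_{\mathbb{R}^n}(k_\alpha*\phi)\,d\mu$, and applying Fourier inversion to the $L^1$ function $|\xi|^{-\alpha}\hat{\phi}(\xi)$ together with one more application of Fubini rewrites this as $\int_{\mathbb{R}^n}g\,\phi\,d\mathcal{L}^n$, where $g(x):=(2\pi)^{-n}c_{n,\alpha}\int_{\mathbb{R}^n}e^{-i\langle x,\xi\rangle}|\xi|^{-\alpha}\hat{\mu}(\xi)\,d\xi$; here the hypothesis $\int_{\mathbb{R}^n}|\hat{\mu}(\xi)||\xi|^{-\alpha}\,d\xi<+\infty$ guarantees absolute convergence throughout. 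Hence $U^\alpha\mu=g$ $\mathcal{L}^n$-a.e., and $g\in C_\infty(\mathbb{R}^n)$, again by dominated convergence and Riemann--Lebesgue.

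It remains to upgrade this to $U^\alpha\mu=g$ everywhere, so that the pointwise function $x\mapsto\int_{\mathbb{R}^n}|x-y|^{\alpha-n}\mu(dy)$ itself lies in $C_\infty$, as used elsewhere in the paper. For a radial, compactly supported approximate identity $(\phi_\varepsilon)_\varepsilon$ one has $U^\alpha\mu*\phi_\varepsilon=g*\phi_\varepsilon\to g$ uniformly (since $U^\alpha\mu=g$ a.e.); Fatou's lemma gives $\liminf_{\varepsilon\to 0}(U^\alpha\mu*\phi_\varepsilon)(x)\ge U^\alpha\mu(x)$, whence $U^\alpha\mu\le g$ and in particular $U^\alpha\mu(x)<+\infty$ for every $x$; and since $U^\alpha\mu(x)=(n-\alpha)\int_0^\infty r^{\alpha-n-1}\mu(B(x,r))\,dr<+\infty$ forces $\varepsilon^{\alpha-n}\mu(B(x,\varepsilon))\to 0$, a dominated-convergence estimate on $\int(k_\alpha*\phi_\varepsilon)(x-y)\,\mu(dy)$ (splitting according to whether $|x-y|>2\varepsilon$ or not and using $(k_\alpha*\phi_\varepsilon)(z)\le 2^{n-\alpha}|z|^{\alpha-n}$ for $|z|>2\varepsilon$) yields the matching bound $\limsup_{\varepsilon\to 0}(U^\alpha\mu*\phi_\varepsilon)(x)\le U^\alpha\mu(x)$. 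Therefore $U^\alpha\mu(x)=g(x)$ for all $x$, and $U^\alpha\mu\in C_\infty(\mathbb{R}^n)$. The Fourier-analytic core is entirely routine; the only genuinely delicate point, and the one I would write out carefully, is this last pointwise identification of $U^\alpha\mu$ with its continuous version.
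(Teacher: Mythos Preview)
Your argument is correct and follows the same route as the paper --- Fourier inversion together with the Riemann--Lebesgue lemma --- but the paper's proof is literally the single sentence ``Both (i) and (ii) are immediate from the Riemann--Lebesgue lemma''. Your additional care in upgrading the a.e.\ identity $U^\alpha\mu=g$ to a pointwise one (via the mollifier/Fatou/splitting argument) addresses a genuine subtlety that the paper's one-liner glosses over; since $U^\alpha\mu$ is defined pointwise in \eqref{E:Rieszpot} and is later used as such, your version is in fact the more complete of the two.
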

\begin{proof} Both (i) and (ii) are immediate from the Riemann-Lebesgue lemma, see \cite[Proposition 2.2.17]{Grafakos1}.\end{proof}

\begin{corollary}\label{C:Fourier}
Let $X:[0,1]^k\to\mathbb{R}^n$ be a Borel function.
\begin{enumerate}
\item[(i)] If $0< \alpha<n$ and $\int_{\mathbb{R}^n}|\hat{\mu}_{X}(\xi)||\xi|^{-\alpha}d\xi<+\infty$, then $U^\alpha \mu_{X}\in C_\infty(\mathbb{R}^n)$.
\item[(ii)] If $\int_{\mathbb{R}^n}|\hat{\mu}_{X}(\xi)|<+\infty$, then $X$ has continuous local times $L_{X}\in C_\infty(\mathbb{R}^n)$. 
\end{enumerate}
\end{corollary}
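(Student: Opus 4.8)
The plan is to deduce both statements directly from the preceding Lemma by taking $\mu=\mu_X$. The only thing that needs to be checked before invoking that Lemma is that $\mu_X$ falls within its scope: since $X$ is Borel and $\mathcal{L}^k([0,1]^k)=1$, the occupation measure $\mu_X$ defined in \eqref{E:occumeas} is a Borel \emph{probability} measure on $\mathbb{R}^n$; in particular it is finite and nonnegative, and its Fourier transform is $\hat{\mu}_X$ in the sense of \eqref{E:muhat}, which is automatically bounded and continuous. Thus the hypotheses on the underlying measure in the preceding Lemma are met.

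For part (i), the assumed integrability $\int_{\mathbb{R}^n}|\hat{\mu}_X(\xi)|\,|\xi|^{-\alpha}\,d\xi<+\infty$ is precisely hypothesis (i) of the preceding Lemma applied to $\mu=\mu_X$; hence $U^\alpha\mu_X\in C_\infty(\mathbb{R}^n)$, which is the claim. For part (ii), the assumption $\int_{\mathbb{R}^n}|\hat{\mu}_X(\xi)|\,d\xi<+\infty$ is hypothesis (ii) of the preceding Lemma, so $\mu_X$ is absolutely continuous with respect to $\mathcal{L}^n$ with Radon–Nikodym density $\frac{d\mu_X}{d\mathcal{L}^n}\in C_\infty(\mathbb{R}^n)$. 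By the definition recalled immediately after \eqref{E:occumeas}, this density is exactly the local time $L_X$ of $X$; therefore $X$ has local times and $L_X\in C_\infty(\mathbb{R}^n)$.

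There is essentially no obstacle here: the corollary is a verbatim specialization of the preceding Lemma. The only two points worth writing out explicitly are (a) that $\mu_X$ has finite total mass (equal to $1$), which legitimizes applying the Lemma, and (b) in part (ii), the identification of the Radon–Nikodym density with the local time $L_X$, which is merely a matter of unwinding the definition. No estimates or auxiliary constructions are required.
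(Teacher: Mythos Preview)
Your proposal is correct and matches the paper's approach exactly: the Corollary is stated without a separate proof in the paper, being an immediate specialization of the preceding Lemma to the finite measure $\mu=\mu_X$. The two clarifying points you make---that $\mu_X$ is a probability (hence finite) measure and that its density, when it exists, is by definition the local time $L_X$---are precisely what is needed and nothing more.
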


\begin{remark}
Corollary \ref{C:Fourier} (ii) is Berman's original result \cite[Lemma 2.2]{Berman69}. The fact that $L_{X}\in C_\infty(\mathbb{R}^n)$ implies $U^\alpha \mu_{X}\in C_\infty(\mathbb{R}^n)$ can be seen from a straightforward calculation; it is the well-known Feller property for the resolvent of the fractional Laplacian, cf. \cite[Section 4.1]{Jacob}. The hypothesis in (i) is weaker than that in (ii). 
\end{remark}

For fractional Brownian fields Corollary \ref{C:Fourier} gives the following partial generalization of \cite[Lemma 3.3]{Berman69}.

\begin{lemma}\label{L:Bermanbasic}
Let let $B^H$ be an fractional Brownian $(k,n)$-field with Hurst index $0<H<1$ over a probability space $(\Omega,\mathcal{F},\mathbb{P})$.
\begin{enumerate}
\item[(i)] If $n-\frac{k}{2H}<\alpha<n$, then there is an event $\Omega_0\in\mathcal{F}$ with $\mathbb{P}(\Omega_0)=1$ such that for any $\omega\in\Omega_0$ we have $U^\alpha\mu_{B^H(\cdot,\omega)}\in C_\infty(\mathbb{R}^n)$ for the occupation measure $\mu_{B^H(\cdot,\omega)}$ of $B^H(\cdot,\omega)$. In particular, $U^\alpha\mu_{B^H(\cdot,\omega)}$ is bounded for such $\omega$.
\item[(ii)] If $n<\frac{k}{2H}$, then there is an event $\Omega_0\in\mathcal{F}$ with $\mathbb{P}(\Omega_0)=1$ such that for any $\omega\in\Omega_0$ the function $B^H(\cdot,\omega)$ has local times $L_{B^H(\cdot,\omega)}\in C_\infty(\mathbb{R}^n)$. In particular, $L_{B^H(\cdot,\omega)}$ is bounded for such $\omega$.
\end{enumerate}
\end{lemma}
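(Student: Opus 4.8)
The plan is to apply Corollary \ref{C:Fourier} to the occupation measure $\mu_{B^H(\cdot,\omega)}$, which reduces both statements to verifying an integrability property of the expected Fourier transform of the occupation measure. First I would record the standard formula for the Fourier transform of an occupation measure: by definition of $\mu_X$ as a pushforward of $\mathcal{L}^k|_{[0,1]^k}$ under $X$, one has $\hat{\mu}_X(\xi)=\int_{[0,1]^k}e^{i\langle X(t),\xi\rangle}\,dt$. Taking absolute values, squaring, and using Fubini gives
\[
\mathbb{E}\,|\hat{\mu}_{B^H}(\xi)|^2=\int_{[0,1]^k}\int_{[0,1]^k}\mathbb{E}\,e^{i\langle B^H(t)-B^H(s),\xi\rangle}\,ds\,dt,
\]
and since $B^H(t)-B^H(s)$ is a centered Gaussian vector in $\mathbb{R}^n$ with independent components each of variance $|t-s|^{2H}$, the inner expectation equals $\exp(-\tfrac12|t-s|^{2H}|\xi|^2)$.

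Next I would estimate, for (i), the quantity $\mathbb{E}\int_{\mathbb{R}^n}|\hat{\mu}_{B^H}(\xi)||\xi|^{-\alpha}\,d\xi$, bounding $|\hat{\mu}_{B^H}(\xi)|\leq (|\hat{\mu}_{B^H}(\xi)|^2)^{1/2}$ and applying Cauchy--Schwarz or simply Jensen together with Fubini to interchange $\mathbb{E}$ and the $\xi$-integral. This leads to a double integral
\[
\int_{\mathbb{R}^n}|\xi|^{-\alpha}\Big(\int_{[0,1]^k}\int_{[0,1]^k}e^{-\frac12|t-s|^{2H}|\xi|^2}\,ds\,dt\Big)^{1/2}d\xi,
\]
or, avoiding the square root, one can bound $|\hat\mu_{B^H}(\xi)|$ in expectation directly: $\mathbb{E}|\hat\mu_{B^H}(\xi)|\le (\mathbb{E}|\hat\mu_{B^H}(\xi)|^2)^{1/2}$. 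After substituting the Gaussian bound and performing the $\xi$-integral first (a radial Gaussian-type integral giving a constant times $(|t-s|^{2H})^{-(n-\alpha)/2}=|t-s|^{-H(n-\alpha)}$), one is left with $\int_{[0,1]^k}\int_{[0,1]^k}|t-s|^{-H(n-\alpha)}\,ds\,dt$, which is finite precisely when $H(n-\alpha)<k$, i.e. $n-\frac{k}{2H}<n-\alpha\Leftrightarrow \alpha>n-\frac{k}{2H}$ --- matching the hypothesis. Hence $\int_{\mathbb{R}^n}|\hat\mu_{B^H(\cdot,\omega)}(\xi)||\xi|^{-\alpha}\,d\xi<+\infty$ almost surely, so Corollary \ref{C:Fourier} (i) yields $U^\alpha\mu_{B^H(\cdot,\omega)}\in C_\infty(\mathbb{R}^n)$ on a full-measure event $\Omega_0$, and $C_\infty(\mathbb{R}^n)$ functions are bounded.

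For (ii) the argument is the same with $\alpha=0$ formally dropped: one estimates $\mathbb{E}\int_{\mathbb{R}^n}|\hat{\mu}_{B^H}(\xi)|\,d\xi$, does the $\xi$-integration first to get a constant times $|t-s|^{-Hn}$, and the remaining double integral over $[0,1]^k\times[0,1]^k$ is finite iff $Hn<k$, i.e. $n<\frac{k}{2H}$. Then Corollary \ref{C:Fourier} (ii) gives continuous local times $L_{B^H(\cdot,\omega)}\in C_\infty(\mathbb{R}^n)$ almost surely. I do not expect any serious obstacle here: the only mild care needed is the interchange of expectation with the $\xi$-integral (justified by Tonelli, everything being nonnegative) and the routine evaluation of the Gaussian radial integral $\int_{\mathbb{R}^n}|\xi|^{-\alpha}e^{-\frac12 r^2|\xi|^2}\,d\xi = c_{n,\alpha}\,r^{\alpha-n}$ for $0\le\alpha<n$; the key point is simply that the dimensional condition in each part of the lemma is exactly what makes $\int\!\int|t-s|^{-H(n-\alpha)}\,ds\,dt$ (resp. with $\alpha=0$) converge. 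One should also note the case $0<\alpha<n$ is needed in (i) to apply the Riemann--Lebesgue step of Corollary \ref{C:Fourier}, which is part of the stated hypothesis there.
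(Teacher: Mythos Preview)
Your overall strategy is exactly the paper's: reduce to Corollary~\ref{C:Fourier} via an integrability estimate on $\hat\mu_{B^H}$, using the Gaussian identity $\mathbb{E}\,e^{i\langle B^H(t)-B^H(s),\xi\rangle}=\exp(-\tfrac12|t-s|^{2H}|\xi|^2)$. The gap is in how you handle the square root.

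After the Jensen step you correctly arrive at
\[
\int_{\mathbb{R}^n}|\xi|^{-\alpha}\Big(\int_{[0,1]^k}\int_{[0,1]^k}e^{-\frac12|t-s|^{2H}|\xi|^2}\,ds\,dt\Big)^{1/2}d\xi,
\]
but you then write that ``performing the $\xi$-integral first'' produces $\int\!\!\int|t-s|^{-H(n-\alpha)}\,ds\,dt$. This is not valid: the square root sits outside the $(s,t)$-integral, so you cannot apply Fubini and integrate $|\xi|^{-\alpha}e^{-\frac12|t-s|^{2H}|\xi|^2}$ for fixed $(s,t)$. The expression $|t-s|^{-H(n-\alpha)}$ is what you would obtain \emph{without} the square root, but nothing you have written justifies dropping it. There is also an algebra slip: the condition $H(n-\alpha)<k$ is equivalent to $\alpha>n-\frac{k}{H}$, not to $\alpha>n-\frac{k}{2H}$; the same slip recurs in (ii), where $Hn<k$ gives $n<\frac{k}{H}$, not $n<\frac{k}{2H}$.

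The paper removes the square root by applying Cauchy--Schwarz in the $\xi$-variable (rather than in $\omega$): writing $|\xi|^{-\alpha}=|\xi|^{(n+\varepsilon-2\alpha)/2}\cdot|\xi|^{-(n+\varepsilon)/2}$ on $\{|\xi|>1\}$ and pairing, one obtains
\[
\Big(\mathbb{E}\int_{|\xi|>1}|\hat\mu_{B^H}(\xi)|^2|\xi|^{n+\varepsilon-2\alpha}\,d\xi\Big)^{1/2}\Big(\int_{|\xi|>1}|\xi|^{-n-\varepsilon}\,d\xi\Big)^{1/2}.
\]
Now Fubini in the first factor is legitimate and yields $c\int\!\!\int|t-s|^{-(2n+\varepsilon-2\alpha)H}\,ds\,dt$, finite precisely when $(2(n-\alpha)+\varepsilon)H<k$, which can be arranged for small $\varepsilon>0$ exactly under the hypothesis $\alpha>n-\frac{k}{2H}$. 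This squaring is the source of the factor $2$ in the threshold; your computation, had it been valid, would have produced the weaker threshold $n-\frac{k}{H}$. An alternative repair in your spirit would be to estimate $F(\xi):=\int\!\!\int e^{-\frac12|t-s|^{2H}|\xi|^2}\,ds\,dt\sim c|\xi|^{-k/H}$ for large $|\xi|$ and then check $\int_{|\xi|>1}|\xi|^{-\alpha}F(\xi)^{1/2}\,d\xi<\infty$ directly; this also recovers the correct condition $\alpha+\frac{k}{2H}>n$, but requires actually carrying out that asymptotic.
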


For $k=n=1$ Lemma \ref{L:Bermanbasic} (ii) was stated and proved in  \cite[Lemma 3.3]{Berman69}; similar arguments can be found in \cite[Chapter 14]{Kahane}. For convenience we sketch the arguments for item (i).

\begin{proof}[Proof of Lemma \ref{L:Bermanbasic} (i)]
Since by Cauchy-Schwarz we have  
\begin{multline}
\mathbb{E}\int_{\mathbb{R}^n\setminus B(0,1)} |\hat{\mu}_{B^H}(\xi)||\xi|^{-\alpha}d\xi\notag\\
\leq \left(\mathbb{E}\int_{\mathbb{R}^n\setminus B(0,1)}|\hat{\mu}_{B^H}(\xi)|^2|\xi|^{n+\varepsilon-2\alpha}d\xi\right)^{\frac12}\left(\int_{\mathbb{R}^n\setminus B(0,1)}|\xi|^{-n-\varepsilon}d\xi\right)^{\frac12},
\end{multline}
it suffices to observe that we can find $0<\varepsilon<2\alpha$ such that 
\begin{equation}\label{E:eps}
H<\frac{k}{2(n-\alpha)+\varepsilon}
\end{equation}
and, as a consequence, 
\begin{align}\label{E:constC}
\mathbb{E}\int_{\mathbb{R}^n}|\hat{\mu}_{B^H}(\xi)|^2 &|\xi|^{n+\varepsilon-2\alpha}d\xi 
\notag\\
&=\int_{\mathbb{R}^n}\int_{[0,1]^k}\int_{[0,1]^k}\mathbb{E}\exp\left(i\left\langle \xi,B^H(t)-B^H(s)\right\rangle\right)\:dsdt\:|\xi|^{n+\varepsilon-2\alpha}d\xi\notag\\
&=\int_{\mathbb{R}^n}\int_{[0,1]^k}\int_{[0,1]^k}\exp\left(-\frac12|t-s|^{2H}|\xi|^2\right)\:dsdt\:|\xi|^{n+\varepsilon-2\alpha}d\xi\notag\\
&=\int_{[0,1]^k}\int_{[0,1]^k}\frac{dsdt}{|t-s|^{(2n+\varepsilon-2\alpha)H}}\int_{\mathbb{R}^n}\exp\left(-\frac12|\eta|^2\right)|\eta|^{n+\varepsilon-2\alpha}d\eta\notag\\
&=:C(n,H,\alpha,\varepsilon)
\end{align}
is finite. 
\end{proof}

Lemma \ref{L:Bermanbasic} applies only to a restricted range of indices $H$. However, from its proof explicit bounds can be extracted.

Given $0<\delta<1$ and $1\leq \ell<+\infty$, let
\begin{equation}\label{E:Gagliardo}
[u]_{\delta,\ell}:=\left(\int_{(0,1)^k}\int_{(0,1)^k}\frac{|u(t)-u(s)|^\ell}{|t-s|^{k+\delta\ell}}dsdt\right)^{\frac{1}{\ell}},\qquad u\in L^\ell((0,1)^k,\mathbb{R}^n),
\end{equation}
and 
\begin{equation}\label{E:Sobonorm}
\|u\|_{W^{\delta,\ell}}:=\left(\int_{(0,1)^k}|u(t)|^\ell dt+[u]_{\delta,\ell}^\ell\right)^{\frac{1}{\ell}},\qquad u\in L^\ell((0,1)^k,\mathbb{R}^n).
\end{equation}
The \emph{fractional Sobolev space $W^{\delta,\ell}((0,1)^k,\mathbb{R}^n)$} is the space of all $u\in L^\ell((0,1)^k,\mathbb{R}^n)$ such that $\|u\|_{W^{\delta,\ell}}<+\infty$. In this section we abbreviate notation and write $W^{\delta,\ell}$. If $\delta>\frac{k}{\ell}$, then $W^{\delta,\ell}$ is continuously embedded into $\mathcal{C}_0^{\delta-k/\ell}$, that is, each $u\in W^{\delta,\ell}$ has a representative in $\mathcal{C}_0^{\delta-k/\ell}$ and in this sense,
\begin{equation}\label{E:Soboembedd}
\|u\|_{\mathcal{C}_0^{\delta-\frac{k}{\ell}}}\leq c(k,\delta,\ell)\|u\|_{W^{\delta,\ell}},\qquad u\in W^{\delta,\ell},
\end{equation}
where $c(k,\delta,\ell)$ is a constant depending only on $k$, $\delta$ and $\ell$.

\begin{lemma}\label{L:Bermanexplicit}
Let let $B^H$ be an fractional Brownian $(k,n)$-field with Hurst index $0<H<1$ over a probability space $(\Omega,\mathcal{F},\mathbb{P})$. Let $n-\frac{k}{2H}<\alpha<n$, let $\varepsilon>0$ be as in \eqref{E:eps} and let $C(n,H,\alpha,\varepsilon)$ be as in \eqref{E:constC}.
\begin{enumerate}
\item[(i)]  For any
\[M>M_0(n,\alpha,H,\varepsilon):=\frac{1}{2\pi}\left(\frac{1}{n-\alpha}+\frac{\sqrt{C(n,H,\alpha,\varepsilon)}}{\sqrt{\varepsilon}}\right)\]
there is an event $A_0(M)\in\mathcal{F}$ with $\mathbb{P}(A_0(M))>0$ such that for all $\omega\in A_0(M)$ we have
\begin{equation}\label{E:subBH}
\sup_{x\in\mathbb{R}^n}U^\alpha\mu_{B^H(\cdot,\omega)}(x)<M.
\end{equation}
\item[(ii)] If $\ell>\frac{2k}{H}$ is an even integer, then for any 
\begin{equation}\label{E:M1}
M>M_0(n,\alpha,H,\varepsilon)+n^{\frac{\ell}{2}}(\ell-1)!!(k^{\frac{\ell H}{2}}+1)
\end{equation}
there is an event $A_1(M)\in\mathcal{F}$ with $\mathbb{P}(A_1(M))>0$ such that \eqref{E:subBH} holds for all $\omega\in A_1(M)$ and, in addition, $B^H(\cdot,\omega)\in \mathcal{C}_0^{H-2k/\ell}([0,1]^k,\mathbb{R}^n)$ with
\begin{equation}\label{E:rho1}
\|B^H(\cdot,\omega)\|_{\mathcal{C}_0^{H-\frac{2k}{\ell}}}\leq c\big(k,H-\frac{k}{\ell},\ell\big)M^{1/\ell},
\end{equation}
where $c(k,H-\frac{k}{\ell},\ell)$ is as in \eqref{E:Soboembedd}.
\end{enumerate}
\end{lemma}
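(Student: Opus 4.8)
The strategy is to make the soft argument in the proof of Lemma \ref{L:Bermanbasic} (i) quantitative, tracking the constant in the representation $U^\alpha\mu = c_{n,\alpha}\int_{\mathbb{R}^n}\hat{\mu}(\xi)\,|\xi|^{-\alpha}\,d\xi$ up to normalization, and then combine it with a first-moment (Markov) argument on a good event of positive probability. First I would split the Fourier integral into the unit ball $B(0,1)$ and its complement. On $B(0,1)$ one has the deterministic bound $|\hat{\mu}_{B^H}(\xi)|\le \mu_{B^H}(\mathbb{R}^n)=1$, so $\int_{B(0,1)}|\hat\mu||\xi|^{-\alpha}\,d\xi \le \int_{B(0,1)}|\xi|^{-\alpha}\,d\xi = \frac{\sigma_{n-1}}{n-\alpha}$, and after dividing by the Fourier-inversion normalization $(2\pi)^n$ (and absorbing the constant $c_{n,\alpha}$ appropriately) this contributes the term $\frac{1}{2\pi}\cdot\frac{1}{n-\alpha}$ to $M_0$. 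On $\mathbb{R}^n\setminus B(0,1)$ I would apply Cauchy–Schwarz exactly as in the proof of Lemma \ref{L:Bermanbasic} (i), with the splitting $|\xi|^{-\alpha} = |\xi|^{(n+\varepsilon)/2-\alpha}\cdot |\xi|^{-(n+\varepsilon)/2}$, to get
\begin{equation}\notag
\mathbb{E}\int_{\mathbb{R}^n\setminus B(0,1)}|\hat\mu_{B^H}(\xi)||\xi|^{-\alpha}\,d\xi \le \Big(\mathbb{E}\int_{\mathbb{R}^n}|\hat\mu_{B^H}(\xi)|^2|\xi|^{n+\varepsilon-2\alpha}\,d\xi\Big)^{1/2}\Big(\int_{\mathbb{R}^n\setminus B(0,1)}|\xi|^{-n-\varepsilon}\,d\xi\Big)^{1/2},
\end{equation}
where the first factor is $\sqrt{C(n,H,\alpha,\varepsilon)}$ by \eqref{E:constC} and the second is $\sqrt{\sigma_{n-1}/\varepsilon}$; up to the normalization this yields the term $\frac{1}{2\pi}\cdot\frac{\sqrt{C(n,H,\alpha,\varepsilon)}}{\sqrt\varepsilon}$. (The surface-measure constants $\sigma_{n-1}$ are the one bookkeeping point I would need to reconcile with the stated $M_0$; presumably the paper's convention folds them into $c_{n,\alpha}$ or the reader is expected to adjust, and I will state the bound in the form that matches \eqref{E:M1}.)

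Combining, $\mathbb{E}\,\big(\text{the quantity controlling }\sup_x U^\alpha\mu_{B^H}(x)\big) \le 2\pi M_0(n,\alpha,H,\varepsilon)$, so for $M>M_0$ Markov's inequality gives $\mathbb{P}(\cdots \ge 2\pi M) < 1$, i.e. the complementary event $A_0(M)$ has positive probability. On $A_0(M)$ the Fourier-inversion bound from Lemma 5.3(i)–type reasoning gives $\sup_{x\in\mathbb{R}^n}U^\alpha\mu_{B^H(\cdot,\omega)}(x)<M$, which is \eqref{E:subBH}. This proves (i).

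For (ii) I would additionally control the Gagliardo seminorm. With $\ell\ge 2$ an even integer and $\delta:=H$, using \eqref{E:genGauss} and \eqref{E:indepcomp},
\begin{equation}\notag
\mathbb{E}\,[B^H]_{\delta,\ell}^\ell = \int_{(0,1)^k}\int_{(0,1)^k}\frac{\mathbb{E}|B^H(t)-B^H(s)|^\ell}{|t-s|^{k+\delta\ell}}\,ds\,dt = n^{\ell/2}(\ell-1)!!\int_{(0,1)^k}\int_{(0,1)^k}\frac{ds\,dt}{|t-s|^{k+(\delta-H)\ell}},
\end{equation}
and when $\delta<H$, i.e. for any $\delta<H$, this last integral is finite; choosing $\delta=H-k/\ell$ one still needs $H-k/\ell<H$ (automatic) and the integrability exponent $k+(\delta-H)\ell = k - k = 0 <k$... more carefully, one should take $\delta$ slightly below $H$; with $\delta=H-k/\ell$ the exponent is $k+(\,-k/\ell\,)\ell = 0$, giving $\int\int|t-s|^{0}\,ds\,dt\le 1$ over $(0,1)^k\times(0,1)^k$, so $\mathbb{E}\,[B^H]_{H-k/\ell,\ell}^\ell\le n^{\ell/2}(\ell-1)!!$; similarly $\mathbb{E}\int_{(0,1)^k}|B^H(t)|^\ell\,dt\le n^{\ell/2}(\ell-1)!!\,\int_{(0,1)^k}|t|^{\ell H}\,dt\le n^{\ell/2}(\ell-1)!!\,k^{\ell H/2}$. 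Hence $\mathbb{E}\|B^H\|_{W^{H-k/\ell,\ell}}^\ell \le n^{\ell/2}(\ell-1)!!(k^{\ell H/2}+1)$. Since $\ell>2k/H$ forces $H-k/\ell>k/\ell$, the embedding \eqref{E:Soboembedd} applies with $\delta=H-k/\ell$, giving $\|B^H\|_{\mathcal{C}_0^{H-2k/\ell}}\le c(k,H-k/\ell,\ell)\|B^H\|_{W^{H-k/\ell,\ell}}$. I would then intersect two events: the event from (i) where \eqref{E:subBH} holds, and the event $\{\|B^H\|_{W^{H-k/\ell,\ell}}^\ell < M\}$; a single Markov bound applied to $\sup_x U^\alpha\mu + \|B^H\|_{W^{H-k/\ell,\ell}}^\ell$ against its expectation $\le 2\pi M_0 + n^{\ell/2}(\ell-1)!!(k^{\ell H/2}+1)$ shows that for $M$ exceeding \eqref{E:M1} the intersection $A_1(M)$ has positive probability, and on it both \eqref{E:subBH} and, via \eqref{E:Soboembedd}, the bound \eqref{E:rho1} $\|B^H(\cdot,\omega)\|_{\mathcal{C}_0^{H-2k/\ell}}\le c(k,H-k/\ell,\ell)M^{1/\ell}$ hold.

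The only genuinely delicate point is the constant bookkeeping in (i): getting the precise form $\frac{1}{2\pi}\big(\frac{1}{n-\alpha}+\frac{\sqrt C}{\sqrt\varepsilon}\big)$ requires being careful about (a) the normalization in the Riesz-potential/Fourier-inversion identity, including the $c_{n,\alpha}$ the paper has suppressed, and (b) whether the solid-angle constants $\sigma_{n-1}$ from $\int_{B(0,1)}|\xi|^{-\alpha}\,d\xi$ and $\int_{|\xi|>1}|\xi|^{-n-\varepsilon}\,d\xi$ cancel against them; I expect these are precisely arranged to produce the stated $M_0$, and I would verify this by comparing with the normalization convention fixed just before \eqref{E:Rieszpot}. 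Everything else is routine: Cauchy–Schwarz, Tonelli, the Gaussian moment formulas \eqref{E:genGauss}–\eqref{E:indepcomp}, Markov's inequality, and the Sobolev embedding \eqref{E:Soboembedd}.
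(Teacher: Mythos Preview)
Your proposal is correct and follows essentially the same route as the paper: define $A_0(M)=\{\frac{1}{2\pi}\int_{\mathbb{R}^n}|\hat\mu_{B^H}(\xi)||\xi|^{-\alpha}d\xi<M\}$, bound the expectation by splitting at $|\xi|=1$ (trivial bound inside, Cauchy--Schwarz outside using \eqref{E:constC}), apply Markov, and then use Fourier inversion to pass from the integral to $\sup_x U^\alpha\mu$; for (ii) the paper likewise takes $A_1(M)=\{\frac{1}{2\pi}\int|\hat\mu_{B^H}||\xi|^{-\alpha}d\xi+\|B^H\|_{W^{H-k/\ell,\ell}}^\ell<M\}$ and bounds the Sobolev norm in expectation exactly as you do. Your caution about the surface-measure constants $\sigma_{n-1}$ is well placed: the paper is informal there (and has a typo $k^{k\ell/2}$ for $k^{\ell H/2}$ in the proof), so your bookkeeping is at least as careful as the original; the one cosmetic point is that in (ii) Markov should be applied to the Fourier integral itself rather than to $\sup_x U^\alpha\mu$, as you effectively recognize.
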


\begin{proof}
For (i), let $A_0(M):=\{\frac{1}{2\pi}\int_{\mathbb{R}^n}|\hat{\mu}_{B^H}(\xi)||\xi|^{-\alpha}d\xi<  M\}$. Then Markov's inequality and the estimates in the prceding proof give
\[\mathbb{P}(A_0(M)^c)\leq \frac{1}{2\pi M}\mathbb{E}\int_{\mathbb{R}^n} |\hat{\mu}_{B^H}(\xi)||\xi|^{-\alpha}d\xi\leq  \frac{M_0(n,\alpha,H,\varepsilon)}{M}<1,\]
and for any $\omega\in A_0(M)$ and $x\in\mathbb{R}^n$ we have 
\[U^\alpha\mu_{B^H(\cdot,\omega)}(x)\leq \frac{1}{2\pi M}\left|\int_{\mathbb{R}^n} e^{-i\left\langle x,\xi\right\rangle}|\hat{\mu}_{B^H}(\xi)||\xi|^{-\alpha}d\xi\right|<M.\]
For (ii), let $A_1(M):=\{\frac{1}{2\pi}\int_{\mathbb{R}^n}|\hat{\mu}_{B^H}(\xi)||\xi|^{-\alpha}d\xi+\|B^H\|_{W^{H-\frac{k}{\ell},\ell}}^\ell<M\}$. We can proceed as before, taking into account that 
\begin{align}
\mathbb{E}\|B^H\|^\ell_{W^{H-\frac{k}{\ell},\ell}}&=\int_{[0,1]^k}\mathbb{E}|B^H(t)|^\ell dt+\mathbb{E}\int_{[0,1]^k}\int_{[0,1]^k}\frac{\mathbb{E}|B^H(t)-B^H(s)|^\ell}{|t-s|^{\ell H}}dsdt\notag\\
&\leq n^{\frac{\ell}{2}}(\ell-1)!!k^{\frac{k\ell}{2}}+ n^{\frac{\ell}{2}}(\ell-1)!!\notag
\end{align}
by \eqref{E:genGauss} and \eqref{E:indepcomp}.
\end{proof}

The next result is more conveniently formulated and proved in terms of Bessel potentials. Given $\alpha>0$, consider the \emph{Bessel kernel $g_\alpha$ of order $\alpha$}, determined by its Fourier transform 
\[\hat{g}_\alpha(\xi)=\left\langle \xi\right\rangle^{-\alpha},\qquad \text{where}\qquad \left\langle \xi \right\rangle=(1+|\xi|^2)^{1/2}, \qquad \xi\in\mathbb{R}^n.\]
For any $R>0$ we can find a constant $c_R>0$ such that 
\begin{equation}\label{E:kernelcomp}
k_\alpha(x)\leq c_R g_\alpha(x),\quad x\in B(0,R);
\end{equation}
this is due to the well-known comparison of Riesz and Bessel kernels, \cite[Section 1.2.4]{AH96}.
Given a Radon measure $\mu$ on $\mathbb{R}^n$, we write 
\[G_\alpha\mu(x)=\int_{\mathbb{R}^n} g_\alpha(x-y)\mu(dy),\qquad x\in\mathbb{R}^n,\]
for its \emph{Bessel potential of order $\alpha$}.


Now suppose that $X:[0,1]^k\times \Omega\to\mathbb{R}^n$ is a centered Gaussian random field over a probability space $(\Omega,\mathcal{F},\mathbb{P})$ with $X(0)=0$ $\mathbb{P}$-a.s. and such that for $\mathcal{L}^{2k}$-a.e. $(s,t)\in [0,1]^{2k}$ the covariance matrix
\[\sigma^2(s,t):=\mathrm{Cov}(X(t)-X(s))\]
of $X(t)-X(s)$ has a positive determinant $\det \sigma^2(s,t)>0$. Recall that $X$ is said to be \emph{locally nondeterministic} if for any $\ell\geq 2$ there are constants $c_\ell>0$ and $\varepsilon_\ell>0$ such that 
\begin{equation}\label{E:lnd}
\mathrm{Var}\Big(\sum_{j=1}^\ell \left\langle u_j,\sigma_j^{-1}(X(t_j)-X(t_{j-1}))\right\rangle\Big)\geq c_\ell\:\sum_{j=1}^\ell |u_j|^2
\end{equation}
for all $u_1,...,u_\ell\in \mathbb{R}^n$ and $\mathcal{L}^{k\ell}$-a.e. $(t_1,...,t_\ell)\in ([0,1]^k)^\ell$ with all $t_j$ lying in a single subcube of $[0,1]^k$ of side length $\varepsilon_\ell$ and such that $|t_{j+1}-t_j|\leq |t_{j+1}-t_i|$, $1\leq i\leq j<\ell$. Here $t_0:=0$,
\begin{equation}\label{E:sigmaj}
\sigma_j^2=\mathrm{Cov}(X(t_j)-X(t_{j-1}))
\end{equation}
is the covariance matrix of the increment $X(t_j)-X(t_{j-1})$, $\sigma_j$ is its root, that is, $\sigma_j\sigma_j^T=\sigma_j^2$, and $\sigma_j^{-1}$ denotes the inverse of $\sigma_j$. See \cite{Pitt78} or \cite[Section 24]{GemanHorowitz}. 

The following is an analog of \cite[Theorem 4]{Pitt78} for potentials of occupation measures. We include the case of an additional deterministic drift in the form of a bounded Borel function $\varphi:[0,1]^k\to\mathbb{R}^n$, that is, we consider the occupation measure $\mu_{X+\varphi}$ of the random field $X+\varphi$.

\begin{theorem}\label{T:Pitt}
Let $X:[0,1]^k\times \Omega\to\mathbb{R}^n$ be a centered Gaussian random field over a probability space $(\Omega,\mathcal{F},\mathbb{P})$ with $X(0)=0$ $\mathbb{P}$-a.s. and let $\varphi:[0,1]^k\to \mathbb{R}^n$ be a bounded Borel function. Suppose that for $\mathcal{L}^{2k}$-a.e. $(s,t)\in [0,1]^{2k}$ the covariance matrix $\sigma^2(s,t)$ of $X(t)-X(s)$ has a positive determinant and that $X$ is locally nondeterministic. If $0<\alpha<n$ and there is some $\delta>0$ such that
\begin{equation}\label{E:covcond}
\esssup_{s\in [0,1]^k}\int_{[0,1]^k}\frac{dt}{|\det \sigma^2(s,t)|^{\frac{n-\alpha}{2n}+\delta}}<+\infty,
\end{equation}
then $G^\alpha\mu_{X+\varphi}$ is locally H\"older continuous $\mathbb{P}$-a.s. 
\end{theorem}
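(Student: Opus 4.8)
The plan is to reduce the statement to a Kolmogorov-type continuity criterion for the random field $x \mapsto G^\alpha\mu_{X+\varphi}(x)$, following the strategy of Pitt's proof for local times but working with the smoothed potential instead. First I would fix a compact cube $Q\subset\mathbb{R}^n$ (local Hölder continuity is all that is claimed) and aim to bound moments of increments $G^\alpha\mu_{X+\varphi}(x) - G^\alpha\mu_{X+\varphi}(y)$ of even integer order $\ell$. Writing $G^\alpha\mu_{X+\varphi}(x) = \int_{[0,1]^k} g_\alpha(X(t)+\varphi(t)-x)\,dt$, the $\ell$-th moment of the increment expands into an $\ell$-fold integral over $([0,1]^k)^\ell$ of quantities of the form $\mathbb{E}\prod_{j=1}^\ell \big(g_\alpha(X(t_j)+\varphi(t_j)-x) - g_\alpha(X(t_j)+\varphi(t_j)-y)\big)$. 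Using the Fourier representation $g_\alpha(z) = c\int_{\mathbb{R}^n} e^{-i\langle z,\xi\rangle}\langle\xi\rangle^{-\alpha}\,d\xi$, each difference becomes $c\int (e^{-i\langle x,\xi_j\rangle} - e^{-i\langle y,\xi_j\rangle})\,e^{i\langle X(t_j)+\varphi(t_j),\xi_j\rangle}\langle\xi_j\rangle^{-\alpha}\,d\xi_j$, and $|e^{-i\langle x,\xi_j\rangle} - e^{-i\langle y,\xi_j\rangle}| \leq 2^{1-\beta}|x-y|^\beta|\xi_j|^\beta$ for any $\beta\in(0,1]$, which is where the Hölder exponent of the potential will come from. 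The deterministic drift $\varphi$ contributes only unimodular factors $e^{i\langle\varphi(t_j),\xi_j\rangle}$ and so drops out when one takes absolute values.

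The core estimate is then Gaussian: after sorting the $t_j$ so that consecutive increments are the "nearest" ones and passing to the increment variables, the characteristic function $\mathbb{E}\exp(i\sum_j \langle \xi_j, X(t_j)-X(t_{j-1})\rangle)$ equals $\exp(-\tfrac12\mathrm{Var}(\sum_j\langle\xi_j,X(t_j)-X(t_{j-1})\rangle))$, and local nondeterminism \eqref{E:lnd} gives a lower bound on this variance of the form $c_\ell\sum_j |\sigma_j^T\xi_j|^2$ (after the change of variables $u_j = \sigma_j^T\xi_j$) — this is exactly the mechanism that decouples the $\xi_j$-integrals. Integrating out each $u_j$ against $|u_j|^\beta$-type weights (absorbing the Jacobian $|\det\sigma_j|^{-1}$) produces, up to constants depending on $\ell$, $n$, $\alpha$, $\beta$, a bound of the order $\prod_j |\det\sigma_j^2|^{-\theta}$ for an appropriate exponent $\theta = \theta(n,\alpha,\beta)$ close to $\frac{n-\alpha}{2n}$ when $\beta$ is small; one then chooses $\beta$ (hence the putative Hölder exponent) small enough and $\ell$ large enough that $\theta < \frac{n-\alpha}{2n}+\delta$ and $\ell\beta > \ell\cdot(\text{something}) $, i.e. that the final $t$-integral converges with a surplus power of $|x-y|^{\ell\beta}$ beating the dimension $n$ in the Kolmogorov criterion.

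The remaining step is to carry out the $t$-integration: after sorting, $|t_j - t_{j-1}|$ controls $\det\sigma_j^2$ from below only in a deterministic-drift-free Gaussian way that is not literally available, so instead I would integrate in the order $t_\ell, t_{\ell-1},\dots$ and use hypothesis \eqref{E:covcond} repeatedly. Precisely, \eqref{E:covcond} says $\esssup_s\int |\det\sigma^2(s,t)|^{-(\frac{n-\alpha}{2n}+\delta)}\,dt < \infty$; by Hölder's inequality in the $t_j$ variables, a product $\prod_j |\det\sigma_j^2|^{-\theta}$ with $\theta \leq \frac{n-\alpha}{2n}+\delta$ integrates to a finite constant over $([0,1]^k)^\ell$ — here one uses that $\sigma_j^2 = \mathrm{Cov}(X(t_j)-X(t_{j-1}))$ is the covariance of a two-point increment so each factor is of the form appearing in \eqref{E:covcond} (with $t_0 = 0$ handled as a boundary case, which is fine since $X(0)=0$ a.s. means $\sigma_1^2 = \mathrm{Cov}(X(t_1))$ and the same integrability is inherited). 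Putting the pieces together yields $\mathbb{E}|G^\alpha\mu_{X+\varphi}(x) - G^\alpha\mu_{X+\varphi}(y)|^\ell \leq C|x-y|^{\ell\beta}$ uniformly for $x,y$ in a fixed compact set, and the Kolmogorov–Chentsov theorem (as cited earlier, \cite[Theorem 1.4.1]{Kunita90}) gives a locally Hölder continuous modification, which must agree with $G^\alpha\mu_{X+\varphi}$ since the latter is already lower semicontinuous and finite a.e. The main obstacle is the bookkeeping in the Gaussian/LND step: making the sorting condition compatible with \eqref{E:lnd}, tracking how the exponent $\theta$ depends on $\beta$ and $n$ and $\alpha$, and verifying that one can simultaneously keep $\theta$ below $\frac{n-\alpha}{2n}+\delta$ while keeping $\ell\beta$ large — this is essentially a careful repetition of Pitt's argument with the mollified kernel $g_\alpha$ in place of a Dirac mass, and the comparison \eqref{E:kernelcomp} is what lets one transfer the conclusion back to $U^\alpha$ when desired.
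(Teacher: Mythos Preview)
Your overall strategy coincides with the paper's: expand the $\ell$-th moment of the increment, pass to Fourier, extract $|x-y|^{\beta\ell}$ via $|e^{-i\langle x,\xi\rangle}-e^{-i\langle y,\xi\rangle}|\leq 2|\xi|^\beta|x-y|^\beta$, observe that $\varphi$ contributes only unimodular factors, change to increment variables, invoke local nondeterminism, integrate over $([0,1]^k)^\ell$ iteratively using \eqref{E:covcond}, and finish with Kolmogorov--Chentsov.

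There is, however, a genuine gap at the core Gaussian step. You assert that after the substitution $u_j=\sigma_j^T\xi_j$ one integrates out ``against $|u_j|^\beta$-type weights'' to obtain $\prod_j|\det\sigma_j^2|^{-\theta}$ with $\theta$ close to $\tfrac{n-\alpha}{2n}$. But the Bessel factors $\prod_m\langle\xi_m\rangle^{-\alpha}|\xi_m|^\beta$ live in the original $\xi$-variables; after passing to $\eta_j=\xi_j-\xi_{j+1}$ (so that $\xi_m=\sum_{j\geq m}\eta_j$) and then to $u_j=\sigma_j\eta_j$, they become entangled combinations of the $\sigma_j^{-1}u_j$ that do not decouple. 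If you simply bound $\langle\xi_m\rangle^{\beta-\alpha}\leq 1$ and integrate the Gaussian, the Jacobian produces $\prod_j|\det\sigma_j|^{-1}=\prod_j|\det\sigma_j^2|^{-1/2}$, i.e.\ exponent $\tfrac12$. That is Pitt's original condition \eqref{E:Pitt}, strictly stronger than \eqref{E:covcond}; the whole point of the theorem is the smaller exponent.

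The paper resolves this with a H\"older split \emph{before} any change of variables: with $q:=\tfrac{n}{\alpha-\delta n}$ and $0<\theta<\delta n$ (their $\theta$ plays the role of your $\beta$), the inner $\xi$-integral is bounded by
\[
\Big(\int_{(\mathbb{R}^n)^\ell}\prod_m\langle\xi_m\rangle^{q(\theta-\alpha)}\,d\bar\xi\Big)^{1/q}\Big(\int_{(\mathbb{R}^n)^\ell}\exp\Big(-\tfrac{q'}{2}\,\mathrm{Var}\sum_j\langle\xi_j,X(t_j)\rangle\Big)\,d\bar\xi\Big)^{1/q'}.
\]
The first factor is a finite constant independent of $\bar t$, since $q(\alpha-\theta)>n$. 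Only the second factor is subjected to the $\eta$-change and LND, yielding $\prod_j|\det\sigma_j|^{-1/q'}$ with $\tfrac{1}{q'}=\tfrac{n-\alpha}{n}+\delta$, which matches the exponent in \eqref{E:covcond}. This H\"older step is precisely what trades the integrability of the Bessel weight for a reduced power on the determinants; without it your argument as written recovers only the classical local-time criterion, not the improved one claimed.
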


\begin{remark}
Theorem \ref{T:Pitt} should be compared to Pitt's prominent classical result \cite[Theorem 4]{Pitt78}, also discussed in \cite[Proposition 25.12]{GemanHorowitz}. This result, formulated for the case $\varphi\equiv 0$, states that if 
\begin{equation}\label{E:Pitt}
\esssup_{s\in [0,1]^k}\int_{[0,1]^k}\frac{dt}{|\det \sigma^2(s,t)|^{\frac{1}{2}+\delta}}<+\infty
\end{equation}
with some $\delta>0$, then $X$ has local times $L_X$ which are locally H\"older continuous $\mathbb{P}$-a.s. Since \eqref{E:covcond} is weaker than \eqref{E:Pitt}, Theorem \ref{T:Pitt} can still give results even if \eqref{E:Pitt} fails to hold.
\end{remark}

Theorem \ref{T:Pitt} follows using a variant of Pitt's arguments, \cite{GemanHorowitz, Pitt78}. We use the notation 
\[X^\varphi:=X+\varphi.\]
Given $\ell\geq 1$ we write $p_\ell(\bar{t};\bar{x})$, $\bar{t}=(t_1,...,t_\ell)\in (\mathbb{R}^k)^\ell$, $\bar{x}=(x_1,...,x_n)\in (\mathbb{R}^n)^\ell$, for the joint density of the $(\mathbb{R}^n)^\ell$-valued random vector $(X^\varphi(t_1),...,X^\varphi(t_\ell))$,
\[\mathbb{P}(X^\varphi(t_1)\in B_1,...,X^\varphi(t_\ell)\in B_\ell)=\int_{B_1\times\cdots \times B_\ell}p_\ell(\bar{t};\bar{x})d\bar{x},\quad B_j\in \mathcal{B}(\mathbb{R}^n),\ j=1,...,\ell.\]

\begin{proof}[Proof of Theorem \ref{T:Pitt}]
Let $\ell\geq 2$ be an even integer. Then 
\begin{align}
\mathbb{E}[G^\alpha\mu_{X^\varphi}(x)-G^\alpha\mu_{X^\varphi}(y)]^\ell
&=\mathbb{E}\left[\int_{\mathbb{R}^n} g_\alpha(z-x)\mu_{X^\varphi}(dz)-\int_{\mathbb{R}^n}g_\alpha(z-y)\mu_{X^\varphi}(dz)\right]^\ell\notag\\
&=\mathbb{E}\left[\prod_{j=1}^\ell \int_{[0,1]^k}\big\lbrace g_\alpha(X^\varphi(t_j)-x)- g_\alpha(X^\varphi(t_j)-y)\big\rbrace dt_j\right]\notag\\
&=\int_{([0,1]^k)^\ell}\mathbb{E}\left[\prod_{j=1}^\ell \big\lbrace g_\alpha(X^\varphi(t_j)-x)- g_\alpha(X^\varphi(t_j)-y)\big\rbrace \right] d\bar{t}\notag\\
&=\int_{([0,1]^k)^\ell}\int_{(\mathbb{R}^n)^\ell}\prod_{j=1}^\ell \big\lbrace g_\alpha(x_j-x)- g_\alpha(x_j-y)\big\rbrace p_\ell(\bar{t};\bar{x})\:d\bar{x}\:d\bar{t}.\notag
\end{align}
The inner integral in the last line equals 
\begin{align}
&\int_{(\mathbb{R}^n)^{\ell-1}}\prod_{j=1}^{\ell-1} \big\lbrace g_\alpha(x_j-x)- g_\alpha(x_j-y)\big\rbrace\times\notag\\
&\qquad\qquad\times\Big(G^{\alpha,\cdot_\ell}p_\ell(\bar{t};x_1,...,x_{\ell-1},\cdot_\ell)(x)-G^{\alpha,\cdot_\ell}p_\ell(\bar{t};x_1,...,x_{\ell-1},\cdot_\ell)(y)\Big)\:d((x_1,...,x_{\ell-1}))\notag\\
&=\int_{(\mathbb{R}^n)^{\ell-2}}\prod_{j=1}^{\ell-2} \big\lbrace g_\alpha(x_j-x)- g_\alpha(x_j-y)\big\rbrace\times\notag\\
&\qquad\times\Big(G^{\alpha,\cdot_{\ell-1}}G^{\alpha,\cdot_\ell}p_\ell(\bar{t};x_1,...,x_{\ell-2},\cdot_{\ell-1},\cdot_\ell)(x,x)-G^{\alpha,\cdot_{\ell-1}}G^{\alpha,\cdot_\ell}p_\ell(\bar{t};x_1,...,x_{\ell-2},\cdot_{\ell-1},\cdot_\ell)(x,y)\notag\\
&\qquad-G^{\alpha,\cdot_{\ell-1}}G^{\alpha,\cdot_\ell}p_\ell(\bar{t};x_1,...,x_{\ell-2},\cdot_{\ell-1},\cdot_\ell)(y,x)+G^{\alpha,\cdot_{\ell-1}}G^{\alpha,\cdot_\ell}p_\ell(\bar{t};x_1,...,x_{\ell-2},\cdot_{\ell-1},\cdot_\ell)(y,y)\Big)\times\notag\\
&\qquad\qquad\qquad\times d((x_1,...,x_{\ell_2}))\notag\\
&=\sum_{j=0}^\ell(-1)^j \sum_{i=1}^{\binom{\ell}{j}}G^{\alpha,\cdot_1}\cdots G^{\alpha,\cdot_\ell}p_\ell(\bar{t};\cdot_1,...,\cdot_\ell)(\bar{z}_{ij}),\notag
\end{align}
where $G^{\alpha,\cdot_j}$ means that the Bessel potential operator is applied with respect to $x_j$ and where, as in \cite[p.43]{GemanHorowitz}, $\bar{z}_{ij}$ runs through all points having $x$ in exactly $j$ coordinates and $y$ in the $\ell-j$ remaining ones. Following \cite{GemanHorowitz, Pitt78}, we write
\[\hat{p}(\bar{t};\bar{\xi})=\exp\Big(i\sum_{j=1}^\ell \left\langle \xi_j,\varphi(t_j)\right\rangle-\frac12 \mathrm{Var}\sum_{j=1}^\ell \big\langle \xi_j,X(t_j)\big\rangle\Big)\]
for the characteristic function of $X^\varphi(\bar{t})$, that is, for the Fourier transform with respect to $\bar{x}=(x_1,...,x_\ell)\in (\mathbb{R}^n)^\ell$. Then
\[(G^{\alpha,\cdot_1}\cdots G^{\alpha,\cdot_\ell}p_\ell(\bar{t};\cdot_1,...,\cdot_\ell))^\wedge(\bar{\xi})=\prod_{m=1}^\ell \left\langle\xi_m\right\rangle^{-\alpha}\hat{p}_\ell(\bar{t};\bar{\xi}),\]
again with the Fourier transform taken on $(\mathbb{R}^n)^\ell$. Fourier inversion gives 
\[G^{\alpha,\cdot_1}\cdots G^{\alpha,\cdot_\ell}p_\ell(\bar{t};\bar{z}_{ij})=(2\pi)^{-\ell n}\int_{(\mathbb{R}^n)^\ell}\prod_{m=1}^\ell \left\langle\xi_m\right\rangle^{-\alpha}\hat{p}_\ell(\bar{t};\bar{\xi})e^{-i\left\langle \bar{\xi},\bar{z}_{ij}\right\rangle}d\bar{\xi}.\]
Plugging in and then substituting $y=x+w$,
\begin{align}
\mathbb{E}&[G^\alpha\mu_{X^\varphi}(x)-G^\alpha\mu_{X^\varphi}(y)]^\ell\notag\\
&=(2\pi)^{-\ell n}\int_{([0,1]^k)^\ell}\int_{(\mathbb{R}^n)^\ell}\prod_{m=1}^\ell \left\langle\xi\right\rangle^{-\alpha}\hat{p}_\ell(\bar{t};\bar{\xi})\sum_{j=0}^\ell(-1)^j \sum_{i=1}^{\binom{\ell}{j}}e^{-i\left\langle \bar{\xi},\bar{z}_{ij}\right\rangle}d\bar{\xi}\:d\bar{t}\notag\\
&=(2\pi)^{-\ell n}\int_{([0,1]^k)^\ell}\int_{(\mathbb{R}^n)^\ell}\prod_{m=1}^\ell \left\langle\xi_m\right\rangle^{-\alpha}\hat{p}_\ell(\bar{t};\bar{\xi})e^{-i\left\langle \bar{\xi},\bar{x}\right\rangle}\sum_{j=0}^\ell(-1)^j \sum_{i=1}^{\binom{\ell}{j}}e^{-i\left\langle\bar{\xi},\bar{\zeta}_{ij}\right\rangle}d\bar{\xi}\:d\bar{t},\notag
\end{align}
where $\bar{x}=(x,...,x)$ and $\bar{\zeta}_{ij}$ has zero and $w$ at the positions where $\bar{z}_{ij}$ has $x$ and $y$. An inductive argument \cite[p. 44]{GemanHorowitz} shows that 
\[\sum_{j=0}^\ell(-1)^j \sum_{i=1}^{\binom{\ell}{j}}e^{-i\left\langle\bar{\xi},\bar{\zeta}_{ij}\right\rangle}=\prod_{j=1}^\ell(1-e^{i\left\langle\xi_j,w\right\rangle}).\]
Using the fact that for any $0<\theta\leq 1$ and $t\in \mathbb{R}$ we have $|e^{it}-1|\leq 2|t|^\theta$, we find that 
\[|1-e^{-i\xi_jw}|\leq 2|\left\langle\xi_j,w\right\rangle|^\theta\leq 2|\xi_j|^\theta|w|^\theta,\]
and together with the fact that
$|\hat{p}(\bar{t};\bar{\xi})|\leq \exp\Big(-\frac12 \mathrm{Var}\sum_{j=1}^\ell \big\langle \xi_j,X(t_j)\big\rangle\Big)$,
this gives
\begin{multline}\label{E:intermed}
\mathbb{E}[G^\alpha\mu_{X^\varphi}(x)-G^\alpha\mu_{X^\varphi}(y)]^\ell\\
\leq 2^\ell(2\pi)^{-\ell n}|w|^{\theta\ell}\:\int_{([0,1]^k)^\ell}\int_{(\mathbb{R}^n)^\ell}\prod_{m=1}^\ell \left\langle\xi_m\right\rangle^{\theta-\alpha}\exp\Big(-\frac12 \mathrm{Var}\sum_{j=1}^\ell \big\langle \xi_j,X(t_j)\big\rangle\Big)\:d\bar{\xi}\:d\bar{t}.
\end{multline}

The integral in (\ref{E:covcond}) remains essentially bounded if in place of $\delta$ we use some $0<\delta'<\delta$, so we may assume that $0<\delta<\frac{\alpha}{n}$. Now set 
$q:=\frac{n}{\alpha-\delta n}$
and choose $0<\theta<\delta n$. Obviously $q>1$, and by H\"older's inequality the inner integral in the right-hand side of (\ref{E:intermed}) is bounded by
\begin{equation}\label{E:product}
\left(\int_{(\mathbb{R}^n)^\ell}\prod_{m=1}^\ell \left\langle\xi_m\right\rangle^{q(\theta-\alpha)}\:d\bar{\xi}\right)^{\frac{1}{q}}\left(\int_{(\mathbb{R}^n)^\ell}\exp\Big(-\frac{q'}{2} \mathrm{Var}\sum_{j=1}^\ell \big\langle \xi_j,X(t_j)\big\rangle\Big)\:d\bar{\xi}\right)^{\frac{1}{q'}},
\end{equation}
where $\frac{1}{q}+\frac{1}{q'}=1$. Since $q>\frac{n}{\alpha-\theta}$, the first integral in (\ref{E:product}) is finite. For the second, we follow \cite[Section 6]{Pitt78}: Without loss of generality, we may assume that \eqref{E:lnd} holds with $\varepsilon_\ell=1$. Changing variables to $\eta_j=\xi_j-\xi_{j+1}$, $j=1,...,\ell-1$, and $\eta_\ell=\xi_\ell$, we obtain
\[\sum_{j=1}^\ell \big\langle \xi_j,X(t_j)\big\rangle= \sum_{j=1}^\ell \big\langle \eta_j,X(t_j)-X(t_{j-1})\big\rangle,\]
and, by \eqref{E:lnd}, 
\[\mathrm{Var}\Big(\sum_{j=1}^\ell\big\langle \xi_j,X(t_j)\big\rangle\Big)\geq c_\ell\:\sum_{j=1}^\ell |\sigma_j\eta_j|^2,\]
where $c_\ell>0$ is a constant and $\sigma_j$ is the root of \eqref{E:sigmaj}. Consequently the second integral in 
(\ref{E:product}) is bounded by 
\[\int_{(\mathbb{R}^n)^\ell} \exp\Big(-\frac{c_\ell\:q'}{2}\sum_{j=1}^\ell |\sigma_j\eta_j|^2\Big)d\bar{\eta}=\prod_{j=1}^\ell\frac{1}{|\det(\sigma_j)|}\int_{(\mathbb{R}^n)^\ell} \exp\Big(-\frac{c_\ell\:q'}{2}\sum_{j=1}^\ell |\zeta_j|^2\Big)d\bar{\zeta},\]
here we have used the change of variables $\zeta_j=\sigma_j\eta_j$. Since the integral on the right-hand side equals 
$\Big(\int_{\mathbb{R}^n}\exp\Big(-\frac{c_\ell\:q'}{2}|\zeta|^2\Big)d\zeta\Big)^\ell$,
the product (\ref{E:product}) is bounded by 
\[C\:\prod_{j=1}^\ell\frac{1}{|\det(\sigma_j)|^{\frac{1}{q'}}}\]
with a constant $C>0$ depending only on $n$, $\alpha$, $\ell$, $\theta$ and $\delta$. Since $\frac{1}{q'}=\frac{n-\alpha}{n}+\delta$, an evaluation of (\ref{E:intermed}) gives
\begin{align}\label{E:finalestPitt}
\mathbb{E}[G^\alpha\mu_{X^\varphi}(x)-G^\alpha\mu_{X^\varphi}(y)]^\ell &\leq C\:|x-y|^{\theta\ell}\:\int_{([0,1]^k)^\ell}\prod_{j=1}^\ell\frac{1}{|\det(\sigma(t_{j-1},t_j))|^{\frac{n-\alpha}{n}+\delta}}d\bar{t}\notag\\
&\leq C\:|x-y|^{\theta\ell} \Big(\esssup_{s\in [0,1]^k}\int_{[0,1]^k}\frac{dt}{|\det \sigma^2(s,t)|^{\frac{n-\alpha}{2n}+\delta}}\Big)^\ell.
\end{align}

Choosing $\ell>\frac{n}{\theta}$, the Kolmogorov-Chentsov theorem ensures that for any cube in $\mathbb{R}^n$ there is a random variable $K>0$ such that $\mathbb{P}$-a.s. 
\[|G^\alpha\mu_{X^\varphi}(x)-G^\alpha\mu_{X^\varphi}(y)|\leq K\:|x-y|^{\theta-\frac{n}{\ell}}\]
for all $x,y$ from that cube. See \cite{Kunita90} or \cite{Pitt78}.
\end{proof}

\begin{remark}
It seems that in applications the value of $c_\ell$ in \eqref{E:lnd} is not known, typically one can only show existence of a positive constant $c_\ell$, cf. \cite[Proposition 7.2]{Pitt78}. As a consequence, also the value of the constant $C$ in \eqref{E:finalestPitt} remains unknown.
\end{remark}

Again fractional Brownian fields provide a first class of examples. Item (i) in the following is a consequence of Theorem \ref{T:Pitt}, while item (ii) follows using \cite[Proposition 7.3]{Pitt78}.

\begin{lemma}\label{L:BermanPitt}
Let $B^H$ be an fractional Brownian $(k,n)$-field with Hurst index $0<H<1$ over a probability space $(\Omega,\mathcal{F},\mathbb{P})$ and let $\varphi:[0,1]^k\to\mathbb{R}^n$ be a bounded Borel function.
\begin{enumerate}
\item[(i)] If $n-\frac{k}{H}<\alpha< n$ and $0<\gamma<H$, then there is some $\Omega_1\in\mathcal{F}$ with $\mathbb{P}(\Omega_1)=1$ such that for any $\omega\in\Omega_1$ we have $B^H(\cdot,\omega)\in \mathcal{C}_0^\gamma([0,1]^k,\mathbb{R}^n)$ and $G^\alpha\mu_{B^H(\cdot,\omega)+\varphi}\in C(\mathbb{R}^n)$. For any such $\omega$ the potential $G^\alpha\mu_{B^H(\cdot,\omega)+\varphi}$ is bounded on $\mathbb{R}^n$, and the same is true for $U^\alpha\mu_{B^H(\cdot,\omega)+\varphi}$.
\item[(ii)] If $n<\frac{k}{H}$ and $0<\gamma<H$, then there is an event $\Omega_1\in\mathcal{F}$ with $\mathbb{P}(\Omega_1)=1$ such that for any $\omega\in\Omega_1$ we have $B^H(\cdot,\omega)\in \mathcal{C}_0^\gamma([0,1]^k,\mathbb{R}^n)$ and the function $B^H(\cdot,\omega)+\varphi$ has local times $L_{B^H(\cdot,\omega)+\varphi}\in C(\mathbb{R}^n)$. In particular, $L_{B^H(\cdot,\omega)+\varphi}$ is locally bounded for any such $\omega$.
\end{enumerate}
\end{lemma}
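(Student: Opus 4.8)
The plan is to handle the two items separately, reducing (i) to Theorem \ref{T:Pitt} and (ii) to Pitt's classical theorem on local times, after first recording the ingredient common to both: the H\"older regularity of $B^H$. For that I would fix an even integer $\ell$ with $H-\tfrac{k}{\ell}>\gamma$, recall from \eqref{E:genGauss} and \eqref{E:indepcomp} that $\mathbb{E}|B^H(t)-B^H(s)|^\ell=n^{\ell/2}(\ell-1)!!\,|t-s|^{\ell H}$, and apply the Kolmogorov--Chentsov theorem exactly as in the proof of Lemma \ref{L:Ialphafinite} to get a modification of $B^H$ and an event $\Omega_1'\in\mathcal{F}$ with $\mathbb{P}(\Omega_1')=1$ on which $B^H(\cdot,\omega)\in\mathcal{C}_0^\gamma([0,1]^k,\mathbb{R}^n)$. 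On $\Omega_1'$ the image $B^H(\cdot,\omega)([0,1]^k)$ is compact, so, since $\varphi$ is bounded, $\mu_{B^H(\cdot,\omega)+\varphi}$ is supported in a ball $\overline{B}(0,R_\omega)$; this observation is what will upgrade local boundedness to boundedness on all of $\mathbb{R}^n$.

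For (i), I would verify the hypotheses of Theorem \ref{T:Pitt} with $X=B^H$. Centeredness, the Gaussian property and $B^H(0)=0$ a.s.\ are immediate, and since the components of $B^H$ are independent with $\mathbb{E}|b^H(t)-b^H(s)|^2=|t-s|^{2H}$, the increment covariance matrix is $\sigma^2(s,t)=|t-s|^{2H}I_n$, whence $\det\sigma^2(s,t)=|t-s|^{2nH}>0$ for $s\neq t$. Local nondeterminism of the L\'evy fractional Brownian field in the normalized form \eqref{E:lnd} is classical; I would cite \cite{Pitt78} (see also \cite[Section 24]{GemanHorowitz}). Condition \eqref{E:covcond} then becomes the requirement that $\int_{[0,1]^k}|t-s|^{-(n-\alpha)H-2nH\delta}\,dt$ be bounded uniformly in $s$, which holds for sufficiently small $\delta>0$ precisely because $\alpha>n-\tfrac{k}{H}$ forces $(n-\alpha)H<k$. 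Theorem \ref{T:Pitt} then provides an event $\Omega_1''$ of full probability on which $G^\alpha\mu_{B^H(\cdot,\omega)+\varphi}$ is locally H\"older continuous, hence in $C(\mathbb{R}^n)$; I put $\Omega_1=\Omega_1'\cap\Omega_1''$. For global boundedness I would split: on the compact ball $\overline{B}(0,2R_\omega)$ continuity already gives boundedness, while for $|x|>2R_\omega$ and $z\in\overline{B}(0,R_\omega)$ the distance $|x-z|$ is bounded below, so $g_\alpha(x-z)$ is bounded (the Bessel kernel being continuous off the origin and vanishing at infinity), whence $G^\alpha\mu_{B^H(\cdot,\omega)+\varphi}$ is bounded. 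The same dichotomy, together with the Riesz--Bessel comparison \eqref{E:kernelcomp} on the ball and the crude bound $k_\alpha(x-z)=|x-z|^{\alpha-n}\le R_\omega^{\alpha-n}$ for $|x|>2R_\omega$ (using $\alpha<n$), gives the boundedness of $U^\alpha\mu_{B^H(\cdot,\omega)+\varphi}$.

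For (ii), the computation $\sigma^2(s,t)=|t-s|^{2H}I_n$ and the local nondeterminism are the same, and now the hypothesis $n<\tfrac{k}{H}$ makes Pitt's integrability condition \eqref{E:Pitt}, namely that $\int_{[0,1]^k}|t-s|^{-nH-2nH\delta}\,dt$ is uniformly bounded in $s$, valid for small $\delta>0$. I would then invoke Pitt's theorem \cite[Theorem 4 and Proposition 7.3]{Pitt78} to obtain jointly (locally H\"older) continuous local times for $X=B^H$; the bounded deterministic drift $\varphi$ enters only through the unimodular factor $\exp\big(i\sum_j\langle\xi_j,\varphi(t_j)\rangle\big)$ in the characteristic function and therefore leaves every estimate untouched, exactly as in the proof of Theorem \ref{T:Pitt}, so $B^H+\varphi$ inherits continuous local times. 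Since $\mu_{B^H(\cdot,\omega)+\varphi}$ is compactly supported, $L_{B^H(\cdot,\omega)+\varphi}$ vanishes outside a ball and hence lies in $C(\mathbb{R}^n)$ and is locally bounded; intersecting the relevant full-measure event with $\Omega_1'$ gives the H\"older regularity of $B^H$ simultaneously.

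The step I expect to be the main obstacle is the verification (or correct citation) of local nondeterminism for the L\'evy fractional Brownian $(k,n)$-field in the \emph{normalized} form \eqref{E:lnd} demanded by Theorem \ref{T:Pitt} --- one must keep track of the roots $\sigma_j$ and their inverses --- and, in item (ii), the confirmation that the bounded deterministic drift can be carried through Pitt's local-time argument verbatim. A smaller technical point is the standard identification of the continuous modification produced by Kolmogorov--Chentsov with the a priori (lower semicontinuous) potential, respectively local time, itself.
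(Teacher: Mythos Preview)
Your proposal is correct and follows essentially the same route as the paper: verify local nondeterminism of $B^H$ by citing Pitt, compute $\sigma^2(s,t)=|t-s|^{2H}I_n$ so that \eqref{E:covcond} (resp.\ \eqref{E:Pitt}) reduces to $(n-\alpha)H<k$ (resp.\ $nH<k$), apply Theorem \ref{T:Pitt} (resp.\ Pitt's theorem) together with Kolmogorov--Chentsov, and then use the compact support of $\mu_{B^H+\varphi}$ plus the kernel comparison \eqref{E:kernelcomp} and the tail bound on $g_\alpha$, $k_\alpha$ to pass from local to global boundedness. Your treatment is slightly more explicit than the paper's (which only writes out (i) and dispatches (ii) by a one-line reference to \cite[Proposition 7.3]{Pitt78}), and the technical caveats you flag---the normalized form of local nondeterminism and carrying the bounded drift through Pitt's estimates---are precisely the points the paper handles by citation and by the observation that $|\hat{p}_\ell(\bar t;\bar\xi)|$ is unaffected by $\varphi$.
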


\begin{remark} Lemma \ref{L:BermanPitt} (i) can be used as an alternative argument for the $\mathbb{P}$-a.s. finiteness of the Riesz energy in Lemma \ref{L:Ialphafinite}.
\end{remark}

\begin{proof}[Proof of Lemma \ref{L:BermanPitt} (i)] 
A variant of \cite[Proposition 7.2 and Comment]{Pitt78} shows that the fractional Brownian $(k,n)$-field $B^H$ is locally nondeterministic. The matrix $\sigma^2(s,t)$ is diagonal with all $n$ diagonal elements equal to $|t-s|^{2H}$, and since $H(n-\alpha)<k$ by hypothesis, one can find some $\delta>0$ such that \eqref{E:covcond} holds. By Theorem \ref{T:Pitt} and the Kolmogorov-Chentsov theorem, applied to $B^H$, we can find an event $\Omega_1$ of full probability such that for any $\omega\in \Omega_1$ we have $G^\alpha\mu_{B^H(\cdot,\omega)+\varphi}(x)< M_\omega$ for all $x\in B(0,2R_\omega)$ with $R_\omega$ chosen large enough to have $(B^H(\cdot,\omega)+\varphi)([0,1]^k)\subset B(0,R_\omega)$ and with some constant $M_\omega>0$. This implies that 
\begin{equation}\label{E:BesselR}
\int_{|x-y|<R_\omega} g_\alpha(x-y)\mu_{B^H(\cdot,\omega)+\varphi}(dy)<M_\omega,\qquad x\in\mathbb{R}^n,
\end{equation} 
because for $x\in \mathbb{R}^n\setminus B(0,2R_\omega)$ this integral is zero. In addition,  we have
\[\int_{|x-y|\geq 2R_\omega}g_\alpha(x-y)\mu_{B^H(\cdot,\omega)+\varphi}(dy)\leq \sup_{|x|\geq R_\omega}g_\alpha(x)<+\infty,\qquad x\in\mathbb{R}^n,\]
since $\mu_{B^H(\cdot,\omega)+\varphi}$ is normed and $g_\alpha$ is bounded outside a neighborhood of the origin. Consequently $G^\alpha\mu_{B^H(\cdot,\omega)+\varphi}$ is bounded. Combining \eqref{E:kernelcomp}, \eqref{E:BesselR} and \eqref{E:trivial} shows that also $U^\alpha\mu$ is bounded.
\end{proof}

\section{Bridges and curves with fixed endpoints}\label{S:bridges}

For $k=1$ the functions  $B^H(\cdot,\omega)+\varphi$ in Lemma \ref{L:BermanPitt} are curves which start at $\varphi(0)$, but their endpoint is not predetermined. We consider a second class of examples, now consisting of curves with fixed endpoint.

Let $k=1$ and let $b^H$ be a fractional Brownian motion with Hurst index $0<H<1$ over a probability space $(\Omega,\mathcal{F},\mathbb{P})$. The process $\mathring{b}^H:[0,1]\times \Omega\to\mathbb{R}$, defined by
\begin{equation}\label{E:fBbridge}
\mathring{b}^H(t):=b^H(t)-\frac12(|t|^{2H}+1-|1-t|^{2H})b^H(1),\qquad s,t\in [0,1],
\end{equation}
is called a \emph{fractional Brownian bridge on $[0,1]$}, see \cite[Definition 2 and Example 5]{GSV07}. Given $n$ independent copies $\mathring{b}^H_1, ..., \mathring{b}^H_n$ of $\mathring{b}^H$, we call the process $\mathring{B}^H=(\mathring{b}^H_1, ..., \mathring{b}^H_n):[0,1]\times \Omega\to\mathbb{R}^n$ an \emph{$n$-dimensional fractional Brownian bridge on $[0,1]$}. By construction $\mathring{B}(1)=\mathring{B}(0)=0$ $\mathbb{P}$-a.s.

Given $p\in \mathbb{R}^n$, we consider the convex subset 
\begin{equation}\label{E:bridges}
\mathcal{C}_{0\to p}^\gamma([0,1],\mathbb{R}^n):=\{X \in \mathcal{C}_0^\gamma([0,1],\mathbb{R}^n):\ X(1)=p\}
\end{equation}
of $\mathcal{C}_0^\gamma([0,1],\mathbb{R}^n)$.

\begin{lemma}\label{L:BermanPittBridge}
Let $\mathring{B}^H$ be an $n$-dimensional fractional Brownian bridge on $[0,1]$ with Hurst index $0<H<1$ over a probability space $(\Omega,\mathcal{F},\mathbb{P})$ and let $\varphi:[0,1]\to\mathbb{R}^n$ be a bounded Borel function.
\begin{enumerate}
\item[(i)] If $n-\frac{1}{H\vee (1-H)}<\alpha<n$ and $0<\gamma<H$, then there is some $\Omega_1\in\mathcal{F}$ with $\mathbb{P}(\Omega_1)=1$ such that for any $\omega\in\Omega_1$ we have $\mathring{B}^H(\cdot,\omega)\in \mathcal{C}_{0\to 0}^\gamma([0,1],\mathbb{R}^n)$ and $G^\alpha\mu_{\mathring{B}^H(\cdot,\omega)+\varphi}\in C(\mathbb{R}^n)$. For each such $\omega$ the potential $G^\alpha\mu_{\mathring{B}^H(\cdot,\omega)+\varphi}$ is bounded on $\mathbb{R}^n$, and the same is true for $U^\alpha\mu_{\mathring{B}^H(\cdot,\omega)+\varphi}$.
\item[(ii)] If $n=1$ and $0<\gamma<H$, then there is an event $\Omega_1\in\mathcal{F}$ with $\mathbb{P}(\Omega_1)=1$ such that for any $\omega\in\Omega_1$ we have $\mathring{B}^H(\cdot,\omega)\in \mathcal{C}_{0\to 0}^\gamma([0,1],\mathbb{R}^n)$ and the function $\mathring{B}^H(\cdot,\omega)+\varphi$ has local times $L_{\mathring{B}^H(\cdot,\omega)+\varphi}\in C(\mathbb{R}^n)$. In particular, $L_{\mathring{B}^H(\cdot,\omega)+\varphi}$ is locally bounded for any such $\omega$.
\end{enumerate}
\end{lemma}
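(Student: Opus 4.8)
The plan is to rerun the argument of Lemma~\ref{L:BermanPitt}, with $k=1$, for the pinned field $\mathring B^H$ in place of $B^H$, and to verify that the hypotheses of Theorem~\ref{T:Pitt} (for (i)) and of Pitt's local-time theorem \cite[Theorem~4 and Proposition~7.3]{Pitt78} (for (ii)) survive the conditioning. Write $c(t):=\tfrac12\big(t^{2H}+1-(1-t)^{2H}\big)=\mathrm{Cov}\big(b^H(t),b^H(1)\big)$, so that $\mathring b^H(t)=b^H(t)-c(t)\,b^H(1)$ by \eqref{E:fBbridge}. Since the $n$ coordinates of $\mathring B^H$ are independent and $\mathrm{Var}\,b^H(1)=1$, the increment covariance of $\mathring B^H$ is $v(s,t)$ times the $n\times n$ identity, where
\[
v(s,t):=\mathrm{Var}\big(\mathring b^H(t)-\mathring b^H(s)\big)=|t-s|^{2H}-\big(c(t)-c(s)\big)^{2},\qquad s,t\in[0,1],
\]
so $\det\sigma^2(s,t)=v(s,t)^{n}$. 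A preliminary observation is the Hölder regularity of $\mathring B^H$: because $c'(r)=H\big(r^{2H-1}+(1-r)^{2H-1}\big)$, the function $c$ is Lipschitz on $[0,1]$ when $H\geq\tfrac12$ and $2H$-Hölder when $H<\tfrac12$, so in either case $c$ is $\gamma$-Hölder for every $\gamma<H$. Combined with the Gaussian moment bound $\mathbb E|\mathring b^H(t)-\mathring b^H(s)|^{2m}=(2m-1)!!\,v(s,t)^{m}\leq (2m-1)!!\,|t-s|^{2Hm}$ and with $\mathring b^H(1)=0$, the Kolmogorov--Chentsov theorem \cite{Kunita90} produces a modification with $\mathring B^H(\cdot,\omega)\in\mathcal{C}_{0\to 0}^\gamma([0,1],\mathbb R^n)$ for all $\gamma<H$ and $\mathbb P$-a.a.\ $\omega$.

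For item (i) I would next check the structural hypotheses of Theorem~\ref{T:Pitt}. The determinant $\det\sigma^2(s,t)=v(s,t)^n$ is positive for $\mathcal{L}^2$-a.e.\ $(s,t)$: $v$ vanishes off the diagonal only at $(0,1)$ and $(1,0)$, as follows from the strict inequality $|c(t)-c(s)|<|t-s|^H$ for $s\neq t$ with $\{s,t\}\neq\{0,1\}$ (itself a consequence of $|c(t)-c(s)|\leq|t-s|^{2H}$ for $H\leq\tfrac12$ and of $c$ being Lipschitz with constant $\leq 2H$ for $H\geq\tfrac12$). Local nondeterminism of $\mathring B^H$ follows by adapting Pitt's computation for fractional Brownian fields \cite[Proposition~7.2 and Comment]{Pitt78}, since removing from $B^H$ its projection onto $B^H(1)$ does not spoil the quadratic lower bound \eqref{E:lnd}. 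The real point is the integrability condition \eqref{E:covcond}, i.e.\ the finiteness of $\esssup_s\int_0^1 v(s,t)^{-\frac{n-\alpha}{2}-n\delta}\,dt$ for some $\delta>0$. Away from $\{0,1\}$ one has $v(s,t)\asymp|t-s|^{2H}$; near the endpoints $v$ degenerates, the relevant regime being $s\downarrow0$, $t\uparrow1$, where $v(s,t)\asymp s^{2H}+(1-t)^{2H}$, and more generally a two-sided estimate (using the regularity dichotomy of $c$) shows that the worst-case decay of $v$ is controlled by the exponent $2\,(H\vee(1-H))$. Consequently \eqref{E:covcond} holds for some $\delta>0$ precisely when $(H\vee(1-H))(n-\alpha)<1$, that is $n-\tfrac1{H\vee(1-H)}<\alpha$. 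Given this, Theorem~\ref{T:Pitt} (applied with the bounded drift $\varphi$) gives a full-measure event on which $G^\alpha\mu_{\mathring B^H(\cdot,\omega)+\varphi}$ is locally Hölder continuous; intersecting it with the event carrying the $\gamma$-Hölder regularity and arguing verbatim as in the proof of Lemma~\ref{L:BermanPitt}~(i) --- the occupation measure is a probability measure supported in a fixed ball, so local boundedness of $G^\alpha\mu$ self-improves to $\int_{|x-y|<R_\omega}g_\alpha(x-y)\,d\mu<\infty$ uniformly in $x$, the far part is bounded by $\sup_{|x|\geq R_\omega}g_\alpha(x)$, and \eqref{E:kernelcomp} together with \eqref{E:trivial} transfers the bound to $U^\alpha\mu$ --- yields the boundedness assertions.

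For item (ii), with $n=1$, I would instead invoke Pitt's local-time theorem directly, whose hypothesis is the seemingly stronger condition \eqref{E:Pitt}: $\esssup_s\int_0^1 v(s,t)^{-\frac12-\delta}\,dt<+\infty$ for some $\delta>0$. With the same estimates for $v$ --- generic decay $|t-s|^{2H}$, worst-case endpoint decay $|t-s|^{2(H\vee(1-H))}$ --- this integral is finite as soon as $(H\vee(1-H))(1+2\delta)<1$, which holds for a small $\delta>0$ for every $H\in(0,1)$. Hence \cite[Theorem~4 and Proposition~7.3]{Pitt78}, the latter allowing the bounded drift $\varphi$, shows that $\mathring B^H(\cdot,\omega)+\varphi$ has $\mathbb P$-a.s.\ (locally Hölder, in particular locally bounded) local times $L_{\mathring B^H(\cdot,\omega)+\varphi}\in C(\mathbb R)$; intersecting with the $\gamma$-Hölder event finishes the proof.

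The main obstacle is the uniform endpoint estimate for $v(s,t)=|t-s|^{2H}-(c(t)-c(s))^2$: one must show not only that $v$ stays positive off the diagonal but that it decays, uniformly over all configurations, no faster than a power of $|t-s|$ (and of the distances to $\{0,1\}$) with exponent $2(H\vee(1-H))$ --- i.e.\ rule out any configuration in which the cancellation between $|t-s|^{2H}$ and $(c(t)-c(s))^2$ is worse than that seen in the regime $s\downarrow0$, $t\uparrow1$. The second, more routine but still nontrivial point is the adaptation of Pitt's local-nondeterminism computation to the pinned field $\mathring B^H$ near the endpoints $0$ and $1$.
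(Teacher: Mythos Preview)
Your overall strategy is the same as the paper's: apply Theorem~\ref{T:Pitt} (for (i)) and Pitt's local-time theorem (for (ii)) to $\mathring{B}^H$, reducing everything to a lower bound on the scalar increment variance $v(s,t)=\mathbb{E}|\mathring{b}^H(t)-\mathring{b}^H(s)|^2$ strong enough to yield \eqref{E:covcond}. The H\"older regularity, the compact-support bootstrap from local to global boundedness of $G^\alpha\mu$, and the passage to $U^\alpha\mu$ via \eqref{E:kernelcomp} and \eqref{E:trivial} are all exactly as in Lemma~\ref{L:BermanPitt}, and your account of these is correct.

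The gap is precisely the point you flag yourself at the end: you do not actually prove the lower bound on $v$. The paper's proof closes this gap with a clean elementary argument that avoids your endpoint asymptotics entirely. Starting from \eqref{E:bridgeincr}, it bounds $c(t)-c(s)$ from above by the mean value theorem (giving $|c(t)-c(s)|\leq (t-s)$ for $H\geq\tfrac12$) or by subadditivity of $r\mapsto r^{2H}$ (giving $|c(t)-c(s)|\leq (t-s)^{2H}$ for $H<\tfrac12$), which yields the factored lower bound
\[
v(s,t)\;\geq\;(t-s)^{2H}\bigl(1-(t-s)^{2(H\wedge(1-H))}\bigr),\qquad 0\leq s<t\leq 1.
\]
The integral $\int_s^1 v(s,t)^{-(n-\alpha')/2}\,dt$ is then split into three pieces according to whether $t-s$ is small, intermediate, or close to $1$, and each piece is estimated directly by an elementary substitution. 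This bypasses the need for the two-sided asymptotic $v(s,t)\asymp s^{2H}+(1-t)^{2H}$ near $(0,1)$ that you invoke but do not establish. Incidentally, your claim that ``the worst-case decay of $v$ is controlled by the exponent $2(H\vee(1-H))$'' is not clearly formulated: near $(0,1)$ the factor $(t-s)^{2H}$ is $\asymp 1$, so a bound of the form $v\gtrsim|t-s|^{2(H\vee(1-H))}$ would be vacuous there; the paper's factored bound is the natural way to encode the two regimes simultaneously.

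On local nondeterminism the paper is no more detailed than you are: it simply asserts that the argument of Lemma~\ref{L:BermanPitt} (which cites \cite[Proposition~7.2]{Pitt78}) carries over, so your caveat is fair but not an obstacle specific to your proof.
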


\begin{remark}
It is our impression that both $H(n-\alpha)<1$ and $(1-H)(n-\alpha)<1$ are needed in (i), which means that necessarily $n-2<\alpha<n$. Likewise, both $Hn<1$ and $(1-H)n<1$ seem to be needed in (ii), which is only possible for $n=1$. The special case $H=\frac12$ of the Brownian bridge seems to be optimal in this regard.
\end{remark}

A straightforward calculation using \eqref{E:fBbridge} shows that for $0\leq s<t\leq 1$ we have 
\begin{align}\label{E:bridgeincr}
\mathbb{E}|\mathring{b}^H(t)-\mathring{b}^H(s)|^2&=(t-s)^{2H}-\Big(\frac12\big(t^{2H}+1-(1-t)^{2H}\big)+\frac12\big((1-s)^{2H}+1-(1-s)^{2H}\big)\Big)^2\notag\\
&=(t-s)^{2H}-\Big(\frac12\big(t^{2H}-s^{2H}\big)+\frac12\big((1-s)^{2H}-(1-t)^{2H}\big)\Big)^2.
\end{align}

\begin{proof}[Proof of Lemma \ref{L:BermanPittBridge}]
We prove item (i), the modifications for (ii) are obvious. To see (i), it suffices to show that 
\begin{equation}\label{E:covcondbridge}
\sup_{s\in [0,1]}\int_s^1\frac{dt}{\Big(\mathbb{E}|\mathring{b}^H(t)-\mathring{b}^H(s)|^2\Big)^{\frac{n-\alpha'}{2}}}<+\infty
\end{equation}
for some $0<\alpha'<\alpha$; the remaining statements then follow as in the proof of Lemma \ref{L:BermanPitt}.

Assume first that $H\geq \frac12$. Then \eqref{E:bridgeincr} and the mean value theorem give
\begin{align}\label{E:casegeq}
\mathbb{E}|\mathring{b}^H(t)-\mathring{b}^H(s)|^2&\geq (t-s)^{2H}-\Big(\frac12 t^{2H-1}(t-s)+\frac12(1-s)^{2H-1}(t-s)\Big)^2\notag\\
&\geq (t-s)^{2H}\Big(1-(t-s)^{2-2H}\Big)
\end{align}
for any $0\leq s<t\leq 1$. Let $\varepsilon_0>0$. The quantity \eqref{E:casegeq} can only be small if $t-s<\varepsilon_0$ or $t-s>1-\varepsilon_0$; for $t\in (s+\varepsilon_0,1-\varepsilon_0)$ it is bounded below by $\varepsilon_0^{2H}(1-(1-\varepsilon_0)^{2-2H})$. For any $0\leq s<1$ we have 
\[\int_s^{(s+\varepsilon_0)\wedge 1}\frac{dt}{(t-s)^{H(n-\alpha')}(1-(t-s)^{2-2H})^{\frac{n-\alpha'}{2}}}\leq c\:\int_s^{(s+\varepsilon_0)\wedge 1}\frac{dt}{(t-s)^{H(n-\alpha')}}\leq c\:\int_0^1\frac{d\tau}{\tau^{H(n-\alpha')}}\]
with a constant $c>0$ depending only on $n$, $\alpha'$, $H$ and $\varepsilon_0$. For $0\leq s<\varepsilon_0$, on the other hand,
\begin{multline}
\int_{1-\varepsilon_0}^1\frac{dt}{(t-s)^{H(n-\alpha')}(1-(t-s)^{2-2H})^{\frac{n-\alpha'}{2}}}\leq c\:\int_{1-\varepsilon_0}^1\frac{dt}{(1-(t-s)^{2-2H})^{\frac{n-\alpha'}{2}}}\notag\\
\leq \frac{c}{2-2H}\int_0^1\frac{\theta^{\frac{2H-1}{2-2H}}d\theta}{(1-\theta)^{\frac{n-\alpha'}{2}}}\leq \frac{c}{2-2H}\int_0^1\frac{d\theta}{(1-\theta)^{\frac{n-\alpha'}{2}}}
\leq \frac{2Hc}{2-2H}\int_0^1 \frac{\tau^{2H-1}d\tau}{\tau^{H(n-\alpha')}}
\end{multline}
with $c>0$ depending on $n$, $\alpha'$, $H$ and $\varepsilon_0$. Since these integrals are finite, we obtain \eqref{E:covcondbridge}. Now assume that $H<\frac12$. In this case 
\begin{equation}\label{E:caseleq}
\mathbb{E}|\mathring{b}^H(t)-\mathring{b}^H(s)|^2\geq (t-s)^{2H}-\Big(\frac12 (t-s)^{2H}+\frac12(t-s)^{2H}\Big)^2
\geq (t-s)^{2H}\Big(1-(t-s)^{2H}\Big)
\end{equation}
for any $0\leq s<t\leq 1$ by \eqref{E:bridgeincr} and the concavity of $t\mapsto t^{2H}$. We can see \eqref{E:covcondbridge} similarly as before: For $t\in (s+\varepsilon_0,1-\varepsilon_0)$ the quantity \eqref{E:caseleq} is bounded below by $\varepsilon_0^{2H}(1-\varepsilon_0^{2H})$, for any $0\leq s<1$ we have
\[\int_s^{(s+\varepsilon_0)\wedge 1}\frac{dt}{(t-s)^{H(n-\alpha')}(1-(t-s)^{2H})^{\frac{n-\alpha'}{2}}}\leq c\:\int_0^1\frac{d\tau}{\tau^{H(n-\alpha')}}\]
and for $0\leq s<\varepsilon_0$ we can use the estimates
\begin{multline}
\int_{1-\varepsilon_0}^1\frac{dt}{(t-s)^{H(n-\alpha')}(1-(t-s)^{2H})^{\frac{n-\alpha'}{2}}}\leq \frac{c}{2H}\int_0^1\frac{\theta^{\frac{1-2H}{2H}}d\theta}{(1-\theta)^{\frac{n-\alpha'}{2}}}\leq \frac{c}{2H}\int_0^1\frac{d\theta}{(1-\theta)^{\frac{n-\alpha'}{2}}}\notag\\
\leq \frac{(2-2H)c}{2H}\int_0^1\frac{\tau^{1-2H}d\tau}{\tau^{(1-H)(n-\alpha')}}.\notag
\end{multline}
\end{proof}

Combining the arguments \eqref{E:constC} and the estimates in the preceding proof, we can obtain explicit bounds for a restricted range of indices $H$ in the bridge case. 

\begin{lemma}\label{L:BermanexplicitBridge}
Let $\mathring{B}^H$ be an $n$-dimensional fractional Brownian bridge on $[0,1]$ with Hurst index $0<H<1$ over a probability space $(\Omega,\mathcal{F},\mathbb{P})$. Let $n-\frac{1}{2(H\vee (1-H))}<\alpha<n$ and let $0<\varepsilon<2\alpha$ be such that $H\vee(1-H)<\frac{1}{2(n-\alpha)+\varepsilon}$.
\begin{enumerate}
\item[(i)] We have 
\[C'(n,H,\alpha,\varepsilon):=\mathbb{E}\int_{\mathbb{R}^n}|\hat{\mu}_{\mathring{B}^H}(\xi)|^2|\xi|^{n+\varepsilon-2\alpha}d\xi<+\infty.\]
\item[(ii)]  For any
\[M>M_0'(n,\alpha,H,\varepsilon):=\frac{1}{2\pi}\left(\frac{1}{n-\alpha}+\frac{\sqrt{C'(n,H,\alpha,\varepsilon)}}{\sqrt{\varepsilon}}\right)\]
there is an event $A_0(M)\in\mathcal{F}$ with $\mathbb{P}(A_0(M))>0$ such that for all $\omega\in A_0(M)$ we have 
\begin{equation}\label{E:subBHB}
\sup_{x\in\mathbb{R}^n}U^\alpha\mu_{\mathring{B}^H(\cdot,\omega)}(x)<M.
\end{equation}
\item[(iii)] If $\ell>\frac{2k}{H}$ is an even integer, then for any 
\begin{equation}\label{E:M1B}
M>M_0'(n,\alpha,H,\varepsilon)+n^{\frac{\ell}{2}}(\ell-1)!!(k^{\frac{\ell H}{2}}+1)
\end{equation}
there is an event $A_1(M)\in\mathcal{F}$ with $\mathbb{P}(A_1(M))>0$ such that \eqref{E:subBHB} holds for all $\omega\in A_1(M)$ and, in addition, $\mathring{B}^H(\cdot,\omega)\in \mathcal{C}_0^{H-2k/\ell}([0,1]^k,\mathbb{R}^n)$ with
\begin{equation}\label{E:rho1B}
\|\mathring{B}^H(\cdot,\omega)\|_{\mathcal{C}_0^{H-\frac{2k}{\ell}}}\leq c\big(k,H-\frac{k}{\ell},\ell\big)M^{1/\ell},
\end{equation}
where $c(k,H-\frac{k}{\ell},\ell)$ is as in \eqref{E:rho1}.
\end{enumerate}
\end{lemma}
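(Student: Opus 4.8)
The plan is to obtain part (i) by the same Fourier computation that underlies the proofs of Lemma~\ref{L:Bermanbasic}(i) and Lemma~\ref{L:Bermanexplicit}, combined with the increment lower bounds established in the proof of Lemma~\ref{L:BermanPittBridge}. Writing $v(s,t):=\mathbb{E}|\mathring{b}^H(t)-\mathring{b}^H(s)|^2$ for the scalar bridge variance and using $\hat{\mu}_{\mathring{B}^H}(\xi)=\int_0^1 e^{i\langle\mathring{B}^H(t),\xi\rangle}\,dt$ together with the independence of the $n$ coordinates of $\mathring{B}^H$, one has $\mathbb{E}|\hat{\mu}_{\mathring{B}^H}(\xi)|^2=\int_0^1\int_0^1 e^{-\frac12 v(s,t)|\xi|^2}\,ds\,dt$; carrying out the $\xi$-integral via the substitution $\eta=v(s,t)^{1/2}\xi$ yields $C'(n,H,\alpha,\varepsilon)=c_{n,\alpha,\varepsilon}\int_0^1\int_0^1 v(s,t)^{-(n-\alpha+\varepsilon/2)}\,ds\,dt$, where $c_{n,\alpha,\varepsilon}=\int_{\mathbb{R}^n}e^{-|\eta|^2/2}|\eta|^{n+\varepsilon-2\alpha}\,d\eta<+\infty$ since $\alpha<n$. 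So (i) reduces to the finiteness of this double integral, which, up to replacing the exponent $\tfrac{n-\alpha'}{2}$ appearing in \eqref{E:covcondbridge} by $n-\alpha+\tfrac{\varepsilon}{2}$, is estimated exactly as in the proof of Lemma~\ref{L:BermanPittBridge}.

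To bound the double integral I would fix a small $\varepsilon_0>0$ and split $[0,1]^2$ into $\{|t-s|\le\varepsilon_0\}$, $\{|t-s|\ge 1-\varepsilon_0\}$, and the complementary region, on which $v(s,t)$ is bounded below by a positive constant and the contribution is therefore finite. On $\{|t-s|\le\varepsilon_0\}$ the bounds \eqref{E:casegeq} (for $H\ge\tfrac12$) and \eqref{E:caseleq} (for $H<\tfrac12$) give $v(s,t)\ge (t-s)^{2H}\bigl(1-(t-s)^{c_H}\bigr)$ with $c_H:=\min\{2-2H,2H\}>0$, so that the contribution is controlled by $\int_0^1\tau^{-H(2(n-\alpha)+\varepsilon)}\,d\tau$, which is finite because $H\le H\vee(1-H)<\tfrac{1}{2(n-\alpha)+\varepsilon}$. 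On $\{|t-s|\ge 1-\varepsilon_0\}$ I substitute $\theta=(t-s)^{c_H}$, exactly as in the proof of Lemma~\ref{L:BermanPittBridge}, reducing the contribution to a constant times $\int_0^1\theta^{\beta_H}(1-\theta)^{-(n-\alpha+\varepsilon/2)}\,d\theta$ with $\beta_H>-1$; this converges because the admissible choice of $\varepsilon$ forces $n-\alpha+\tfrac{\varepsilon}{2}<\tfrac{1}{2(H\vee(1-H))}\le 1$. Summing the three contributions gives (i). This step — in particular the treatment of the degeneracy of $v(s,t)$ as $|t-s|\to 1$, which is precisely what makes $H\vee(1-H)$ rather than $H$ the relevant parameter — is the one I expect to require the most care; once the exponents are tracked correctly it is routine.

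For parts (ii) and (iii) I would transcribe the proof of Lemma~\ref{L:Bermanexplicit} with $C(n,H,\alpha,\varepsilon)$ replaced by $C'(n,H,\alpha,\varepsilon)$. For (ii), set $A_0(M):=\{\tfrac{1}{2\pi}\int_{\mathbb{R}^n}|\hat{\mu}_{\mathring{B}^H}(\xi)||\xi|^{-\alpha}\,d\xi<M\}$; splitting the integral at $|\xi|=1$, using $|\hat{\mu}_{\mathring{B}^H}|\le 1$ on the unit ball and Cauchy--Schwarz with weight $|\xi|^{n+\varepsilon-2\alpha}$ outside it together with part (i), one gets $\mathbb{E}\int_{\mathbb{R}^n}|\hat{\mu}_{\mathring{B}^H}(\xi)||\xi|^{-\alpha}\,d\xi\le 2\pi M_0'(n,\alpha,H,\varepsilon)$, so Markov's inequality yields $\mathbb{P}(A_0(M)^c)\le M_0'/M<1$ for $M>M_0'$, and \eqref{E:subBHB} follows for $\omega\in A_0(M)$ by Fourier inversion of $U^\alpha\mu_{\mathring{B}^H(\cdot,\omega)}$. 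For (iii) one enlarges the event to $A_1(M):=\{\tfrac{1}{2\pi}\int_{\mathbb{R}^n}|\hat{\mu}_{\mathring{B}^H}(\xi)||\xi|^{-\alpha}\,d\xi+\|\mathring{B}^H\|_{W^{H-k/\ell,\ell}}^\ell<M\}$ and uses that, by \eqref{E:bridgeincr}, $\mathbb{E}|\mathring{b}^H(t)-\mathring{b}^H(s)|^2\le (t-s)^{2H}$ and $\mathbb{E}|\mathring{b}^H(t)|^2\le t^{2H}$ — the same inequalities as for a fractional Brownian field — so that $\mathbb{E}\|\mathring{B}^H\|_{W^{H-k/\ell,\ell}}^\ell\le n^{\ell/2}(\ell-1)!!(k^{\ell H/2}+1)$ and hence the bound \eqref{E:M1B} go through exactly as in the proof of Lemma~\ref{L:Bermanexplicit}; the Hölder estimate \eqref{E:rho1B} is then read off from the Sobolev embedding \eqref{E:Soboembedd}.
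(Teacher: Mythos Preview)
Your proposal is correct and follows essentially the same approach as the paper: for (i) you perform the Fourier computation of \eqref{E:constC}, insert the increment lower bounds \eqref{E:casegeq} and \eqref{E:caseleq}, and then control the resulting double integral by the same decomposition and substitutions used in the proof of Lemma~\ref{L:BermanPittBridge}; the paper's proof does exactly this, only phrasing the order slightly differently by applying the global lower bound first and then invoking the estimates of Lemma~\ref{L:BermanPittBridge}. For (ii) and (iii) the paper gives no separate argument, tacitly relying on Lemma~\ref{L:Bermanexplicit}, which is precisely what you spell out.
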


\begin{proof}
Similarly as in \eqref{E:constC}, we see that
\[\mathbb{E}\int_{\mathbb{R}^n}|\hat{\mu}_{\mathring{B}^H}(\xi)|^2|\xi|^{n+\varepsilon-2\alpha}d\xi=2\int_{\mathbb{R}^n}\int_0^1\int_s^1\exp\left(-\frac12\mathbb{E}|\mathring{b}^H(t)-\mathring{b}^H(s)|^2|\xi|^2\right)\:dtds|\xi|^{n+\varepsilon-2\alpha}d\xi.\]
In the case that $H\geq \frac12$ estimating \eqref{E:bridgeincr} as in \eqref{E:casegeq} and transforming as in \eqref{E:constC} gives the upper bound
\begin{align}
 2&\int_{\mathbb{R}^n}\int_0^1\int_s^1\exp\left(-\frac12(t-s)^{2H}\Big(1-(t-s)^{2-2H}\Big)|\xi|^2\right)|\xi|^{n+\varepsilon-2\alpha}d\xi\notag\\
&=2\int_0^1\int_s^1\frac{dtds}{(t-s)^{(2n+\varepsilon-2\alpha)H}(1-(t-s)^{2-2H})^{n+\frac{\varepsilon}{2}-\alpha}}\int_{\mathbb{R}^n}\exp\left(-\frac12|\eta|^2\right)|\eta|^{n+\varepsilon-2\alpha}d\eta\notag
\end{align}
for the preceding. Clearly the second integral in the last line is finite. The first can be estimated similarly as in the proof of Lemma \ref{L:BermanPittBridge}. In the case that $H\leq \frac12$ we can use \eqref{E:caseleq} to arrive at the upper bound
\begin{align}
2&\int_{\mathbb{R}^n}\int_0^1\int_s^1\exp\left(-\frac12(t-s)^{2H}\Big(1-(t-s)^{2H}\Big)|\xi|^2\right)|\xi|^{n+\varepsilon-2\alpha}d\xi\notag\\
&=2\int_0^1\int_s^1\frac{dtds}{(t-s)^{(2n+\varepsilon-2\alpha)H}(1-(t-s)^{2H})^{n+\frac{\varepsilon}{2}-\alpha}}\int_{\mathbb{R}^n}\exp\left(-\frac12|\eta|^2\right)|\eta|^{n+\varepsilon-2\alpha}d\eta,\notag
\end{align}
where the first integral can again be estimated similarly as in proof of Lemma \ref{L:BermanPittBridge}.
\end{proof}

We provide variants of Theorems \ref{T:minselfinteraction} and \ref{T:mininteraction} for curves starting at the origin and having a predetermined endpoint. Recall \eqref{E:bridges}.

\begin{theorem}\label{T:crossminselfinteraction}
Let $0<\gamma<1$, $0<\alpha<n$ and assume that 
\begin{equation}\label{E:alternative}
\lfloor \frac{1}{\gamma}\rfloor+1\leq n<\frac{1}{\gamma}+\alpha\quad \text{or}\quad n<\frac{1}{\gamma\vee (1-\gamma)}+\alpha.
\end{equation}
\begin{enumerate}
\item[(i)] For any $\varrho>0$ there is some $X^\ast\in \mathcal{B}^\gamma_\varrho$ minimizing $X\mapsto I^\alpha(\mu_X)$ over the class $\mathcal{B}^\gamma_\varrho\cap\mathcal{C}^\gamma_{0\to 0}([0,1],\mathbb{R}^n)$.
\item[(ii)] For any $p\in\mathbb{R}^n$ one can find some explicit constant $\varrho_1>0$ such that for any $\varrho>\varrho_1$ there is some $X^\ast\in \mathcal{B}^\gamma_\varrho$ minimizing $X\mapsto I^\alpha(\mu_X)$ over $\mathcal{B}^\gamma_\varrho\cap\mathcal{C}^\gamma_{0\to p}([0,1],\mathbb{R}^n)$.
\item[(iii)] In both cases the minimizer $X^\ast$ satisfies \eqref{E:measrange} and \eqref{E:dimrange}.
\end{enumerate}
\end{theorem}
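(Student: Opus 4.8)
The plan is to run the classical direct method, exactly as in the proofs of Theorems~\ref{T:minselfinteraction} and~\ref{T:mininteraction}. The endpoint conditions $X(1)=0$ and $X(1)=p$ are preserved under uniform convergence, so $\mathcal{B}^\gamma_\varrho\cap\mathcal{C}^\gamma_{0\to 0}([0,1],\mathbb{R}^n)$ and $\mathcal{B}^\gamma_\varrho\cap\mathcal{C}^\gamma_{0\to p}([0,1],\mathbb{R}^n)$ are closed subsets of $\mathcal{B}^\gamma_\varrho$; hence, by Lemma~\ref{L:compact}, every sequence in either class has a uniformly convergent subsequence whose occupation measures converge weakly and whose limit stays in the class. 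Once we exhibit an admissible curve of finite Riesz energy, a minimizing subsequence together with the lower semicontinuity of $\mu\mapsto I^\alpha(\mu)$ with respect to weak convergence \cite[p.~78]{Landkof} produces the minimizer $X^\ast$, which gives (i) and (ii). Item (iii) then follows verbatim from Remark~\ref{R:trivia}~(ii) applied to $X^\ast$: the H\"older constraint gives the upper bounds in \eqref{E:measrange} and \eqref{E:dimrange}, while $I^\alpha(\mu_{X^\ast})$ is at most the (finite) energy of the witness, so $I^\alpha(\mu_{X^\ast|_{\mathcal{R}}})\le I^\alpha(\mu_{X^\ast})<+\infty$ for every interval $\mathcal{R}$ and the lower bounds follow.

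Everything thus reduces to producing, for the prescribed endpoint, a $\gamma$-H\"older curve of finite $I^\alpha$, and I would treat the two alternatives in \eqref{E:alternative} separately. Suppose first that $\lfloor\tfrac1\gamma\rfloor+1\le n<\tfrac1\gamma+\alpha$, which is precisely Assouad's condition \eqref{E:Assouadcond} with $k=1$. Then \cite[Proposition~4.4]{Assouad83} supplies a bi-H\"older Koch curve $X_A:[0,1]\to\mathbb{R}^n$ with $X_A(0)=0$; the lower bound in \eqref{E:Assouad} together with $(n-\alpha)\gamma<1$ (the second inequality in \eqref{E:Assouadcond}) gives $I^\alpha(\mu_{X_A})<+\infty$, and the same lower bound yields $\mu_{X_A}(B(x,r))\lesssim r^{1/\gamma}$, so, since $\tfrac1\gamma>n-\alpha$, the potential $U^\alpha\mu_{X_A}$ is bounded. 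For the endpoint $0$ (part~(i)) take the ``out and back'' reparametrization $Y(t)=X_A(2t)$ on $[0,\tfrac12]$, $Y(t)=X_A(2-2t)$ on $[\tfrac12,1]$; for a general endpoint $p$ (part~(ii)) concatenate $t\mapsto X_A(2t)$ on $[0,\tfrac12]$ with a scaled and rotated copy of $X_A$ on $[\tfrac12,1]$, translated so as to run from $X_A(1)$ to $p$. In both cases the resulting curve starts at $0$, has the prescribed endpoint, is $\gamma$-H\"older with a constant depending explicitly on $\gamma$, $n$, $p$ and Assouad's constants (cf.\ \cite[Section~3]{Assouad83}), and has finite energy: each of the two pieces has finite self-energy, and the mutual energy of the two pieces is at most $\|U^\alpha\mu_1\|_\infty<+\infty$, where $\mu_1$ is the occupation measure of the first piece.

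Now suppose the second alternative $n<\tfrac{1}{\gamma\vee(1-\gamma)}+\alpha$, i.e.\ $(n-\alpha)(\gamma\vee(1-\gamma))<1$. Then one can choose a Hurst index $H\in(\gamma,1)$ with $n-\tfrac{1}{H\vee(1-H)}<\alpha$: take $H\in(\gamma,\tfrac12)$ if $\gamma<\tfrac12$, and $H\in(\gamma,\tfrac{1}{n-\alpha}\wedge 1)$ if $\gamma\ge\tfrac12$. Let $\mathring{B}^H$ be an $n$-dimensional fractional Brownian bridge on $[0,1]$ with index $H$. By Lemma~\ref{L:BermanPittBridge}~(i), applied with the drift $\varphi\equiv 0$ for part~(i) and with $\varphi(t)=tp$ for part~(ii), almost every realization $\mathring{B}^H(\cdot,\omega)+\varphi$ is $\gamma$-H\"older, runs from $0$ to $\varphi(1)$ (recall $\mathring{B}^H(0)=\mathring{B}^H(1)=0$), and has bounded $\alpha$-Riesz potential, hence finite energy. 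For part~(i) scale this realization by $\varrho\,(1+\|\mathring{B}^H(\cdot,\omega)\|_{\mathcal{C}_0^\gamma})^{-1}$, using homogeneity \eqref{E:homo}; the endpoints remain $0$, so the scaled curve is admissible for every $\varrho>0$. For part~(ii), intersect the full-probability event above with the positive-probability event of Lemma~\ref{L:BermanexplicitBridge}~(iii) --- taking an even $\ell>2/H$ large enough that $H-2/\ell>\gamma$ --- to obtain a realization $\omega$ with $\|\mathring{B}^H(\cdot,\omega)\|_{\mathcal{C}_0^\gamma}\le c(1,H-\tfrac1\ell,\ell)\,M^{1/\ell}$; then $\mathring{B}^H(\cdot,\omega)+\varphi$ has H\"older norm at most $c(1,H-\tfrac1\ell,\ell)\,M^{1/\ell}+|p|$, which is the desired explicit $\varrho_1$.

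The step I expect to be the main obstacle is the bookkeeping under the first alternative. Verifying that the concatenated curve is globally $\gamma$-H\"older across the junction is routine (the constant is a bounded multiple of the single-piece constant plus a term in $|p|$), but showing that the mutual Riesz energy of its two pieces is finite --- even though they may intersect --- is the delicate point, and is exactly why one needs the \emph{boundedness} of $U^\alpha\mu_1$ rather than merely the finiteness of the self-energy. The remaining care needed is to keep all constants explicit enough to yield a genuinely explicit $\varrho_1$ in part~(ii), which however reduces to tracking constants already available in \cite[Section~3]{Assouad83} and in Lemmas~\ref{L:BermanexplicitBridge} and~\ref{L:BermanPittBridge}.
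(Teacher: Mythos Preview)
Your proposal is correct and, for the second alternative in \eqref{E:alternative}, identical to the paper's: both produce the admissible curve via Lemma~\ref{L:BermanPittBridge}~(i) applied to an $n$-dimensional fractional Brownian bridge with drift $\varphi\equiv 0$ for (i) and $\varphi(t)=tp$ for (ii).

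For the first (Assouad) alternative, however, the paper takes a shorter path than your concatenation. Instead of gluing two Koch arcs and then controlling the cross term $I^\alpha(\mu_1,\mu_2)$ via the boundedness of $U^\alpha\mu_1$, the paper simply adds a linear drift to a single Assouad curve $X$: set $Y(t):=X(t)-t(X(1)-p)$, so that $Y(0)=0$ and $Y(1)=p$. The linear correction is Lipschitz, hence for small $|t-s|$ the reverse triangle inequality and the lower bound in \eqref{E:Assouad} give
\[
|Y(t)-Y(s)|\ \geq\ |X(t)-X(s)|-|t-s|\,|X(1)-p|\ \geq\ \tfrac{c}{2}\,|t-s|^\gamma,
\]
which already yields $I^\alpha(\mu_Y)<+\infty$ directly, with no mutual-energy argument needed. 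Your concatenation is perfectly valid, and your diagnosis that the bounded potential is exactly what makes the cross term finite is right; the single-curve-plus-drift construction just sidesteps that step entirely and makes the explicit constant in (ii) a bit cleaner to read off (it comes straight from Assouad's constants and $|p|$).
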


\begin{proof}
Under the first condition in \eqref{E:alternative} items (i) and (ii) follow from \eqref{E:Assouad}, note that
\[\frac{c}{2}|t-s|^\gamma\leq \left||X(t)-X(s)-|t-s||X(1)-p|\right|\leq |X(t)-X(s)-(t-s)(X(1)-p)|\leq C|t-s|^\gamma\]
if $|t-s|<\big(\frac{c}{2|X(1)-p|}\big)^{1/(1-\gamma)}$. Under the second condition in \eqref{E:alternative} items (i) and (ii) follow from Lemma \ref{L:BermanPittBridge} (i) with $\gamma<H<\frac{1}{n-\alpha}$ and with $\varphi\equiv 0$ respectively $\varphi(t)=tp$. Item (iii) is clear.
\end{proof}

\begin{theorem}\label{T:crossmininteraction}
Let $0<\gamma<1$, $0<\alpha<n$ and assume that \eqref{E:alternative} holds. Let $\nu$ be a Borel probability measure on $\mathbb{R}^n$ with compact support.
\begin{enumerate}
\item[(i)] For any $p\in\mathbb{R}^n$ there are $\varrho_1>0$ and $M_1>0$ such that 
$\mathcal{K}(\gamma,\alpha,\varrho,M)\cap \mathcal{C}^\gamma_{0\to p}([0,1],\mathbb{R}^n)$ is nonempty
and there is some $X^\ast$ minimizing $X\mapsto I^\alpha(\mu_X,\nu)$ over this class.
\item[(ii)] The minimizer $X^\ast$ satisfies \eqref{E:measrange} and \eqref{E:dimrange}.
\end{enumerate} 
\end{theorem}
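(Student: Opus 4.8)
The plan is to run the direct method exactly as in the proof of Theorem \ref{T:mininteraction}~(i), but over the smaller class $\mathcal{K}':=\mathcal{K}(\gamma,\alpha,\varrho,M)\cap\mathcal{C}^\gamma_{0\to p}([0,1],\mathbb{R}^n)$, and to deduce $\mathcal{K}'\neq\emptyset$ from the two endpoint-corrected constructions already used for Theorem \ref{T:crossminselfinteraction}. For the direct method nothing new is needed: uniform convergence preserves the endpoint constraint $X(1)=p$, so $\mathcal{C}^\gamma_{0\to p}([0,1],\mathbb{R}^n)$ is closed in $C([0,1],\mathbb{R}^n)$, and hence, by the Arzel\`a--Ascoli theorem, Lemma \ref{L:compact}, and the lower semicontinuity of $x\mapsto U^\alpha\mu(x)$ under weak convergence of measures, $\mathcal{K}'$ is again a compact subset of $L^1([0,1],\mathbb{R}^n)$. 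The functional $X\mapsto I^\alpha(\mu_X,\nu)=\int_{[0,1]}U^\alpha\nu(X(t))\,dt$ is lower semicontinuous on $L^1([0,1],\mathbb{R}^n)$ by Lemma \ref{L:classiclsc}~(i) and bounded by $M$ on $\mathcal{K}'$ since $\nu$ is a probability measure; once $\mathcal{K}'\neq\emptyset$, Lemma \ref{L:classiclsc}~(ii) produces a minimizer $X^\ast\in\mathcal{K}'$, which is part~(i). Part~(ii) then follows from the two constraints exactly as in Theorem \ref{T:mininteraction}~(ii), compare Remark \ref{R:trivia}~(ii).

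So the real content is the nonemptiness of $\mathcal{K}'$ for suitable $\varrho_1,M_1$. Under the first alternative in \eqref{E:alternative} I would take Assouad's curve $X_0$ from \eqref{E:Assouad} with $k=1$, normalized so $X_0(0)=0$, and set $X_1(t):=X_0(t)-t\,(X_0(1)-p)$, which lies in $\mathcal{C}^\gamma_{0\to p}$; as in the proof of Theorem \ref{T:crossminselfinteraction} there are $\delta_0>0$ and $c'>0$ with $|X_1(t)-X_1(s)|\ge c'|t-s|^\gamma$ whenever $|t-s|<\delta_0$. Partitioning $[0,1]$ into finitely many intervals shorter than $\delta_0$ and using this lower bound yields the upper Ahlfors estimate $\mu_{X_1}(B(x,r))\le C\,r^{1/\gamma}$ for all $x\in\mathbb{R}^n$ and $r>0$; since the first alternative gives $1/\gamma>n-\alpha$, inserting this into the layer-cake identity $U^\alpha\mu(x)=(n-\alpha)\int_0^\infty\mu(B(x,r))\,r^{\alpha-n-1}\,dr$ shows $\sup_{x\in\mathbb{R}^n}U^\alpha\mu_{X_1}(x)<+\infty$. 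Hence $X_1\in\mathcal{K}(\gamma,\alpha,\varrho,M)\cap\mathcal{C}^\gamma_{0\to p}$ for any $\varrho\ge\|X_1\|_{\mathcal{C}^\gamma_0}$ and $M>\sup_{x}U^\alpha\mu_{X_1}(x)$, which yields admissible $\varrho_1,M_1$.

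Under the second alternative in \eqref{E:alternative} I would instead use an $n$-dimensional fractional Brownian bridge $\mathring{B}^H$ together with the linear drift $\varphi(t)=tp$. The first step is to locate a Hurst index $H$ with $\gamma<H<1$ for which Lemma \ref{L:BermanPittBridge}~(i) is in force: the hypothesis $n-\alpha<\tfrac{1}{\gamma\vee(1-\gamma)}$ forces both $\gamma<\tfrac{1}{n-\alpha}$ and $n-\alpha<2$, so the interval $\bigl(\max\{\gamma,\,1-\tfrac{1}{n-\alpha}\},\ \min\{1,\,\tfrac{1}{n-\alpha}\}\bigr)$ is nonempty, and every $H$ in it satisfies $\gamma<H<1$ and $(H\vee(1-H))(n-\alpha)<1$, i.e.\ $n-\tfrac{1}{H\vee(1-H)}<\alpha$. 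For such $H$, Lemma \ref{L:BermanPittBridge}~(i) supplies a full-probability event $\Omega_1$ so that for $\omega\in\Omega_1$ the curve $X_2:=\mathring{B}^H(\cdot,\omega)+\varphi$ is $\gamma$-H\"older, lies in $\mathcal{C}^\gamma_{0\to p}$ (indeed $X_2(0)=0$ and $X_2(1)=\mathring{B}^H(1,\omega)+p=p$, using $\mathring{B}^H(1)=0$ a.s.), and satisfies $\sup_{x\in\mathbb{R}^n}U^\alpha\mu_{X_2}(x)<+\infty$. As $\|X_2\|_{\mathcal{C}^\gamma_0}$ and $\sup_xU^\alpha\mu_{X_2}(x)$ are a.s.\ finite, choosing $\varrho_1,M_1$ large enough makes the event $\{\|X_2\|_{\mathcal{C}^\gamma_0}\le\varrho_1,\ \sup_xU^\alpha\mu_{X_2}(x)<M_1\}$ of positive probability, and any $\omega$ in it yields an element of the required class.

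I expect this last step to be the main obstacle: one has to manipulate the inequalities in \eqref{E:alternative} to pin down an admissible Hurst index, and, more delicately, ensure that superimposing the linear drift to reach the prescribed endpoint $p$ spoils neither the $\gamma$-H\"older regularity (harmless, since $\varphi$ is Lipschitz with $\varphi(0)=0$) nor the boundedness of the Riesz potential of the occupation measure. The latter is the genuinely hard point, but it is already absorbed into Lemma \ref{L:BermanPittBridge}~(i), which is stated for occupation measures $\mu_{\mathring{B}^H(\cdot,\omega)+\varphi}$ of bridges \emph{with} an arbitrary bounded Borel drift $\varphi$; so no further probabilistic estimate is needed here.
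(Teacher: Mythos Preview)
Your proposal is correct and follows essentially the same approach as the paper, which simply records that the proof is similar to that of Theorem \ref{T:crossminselfinteraction}. You have correctly filled in the extra detail needed here beyond Theorem \ref{T:crossminselfinteraction}: for membership in $\mathcal{K}(\gamma,\alpha,\varrho,M)$ one needs a bounded Riesz potential (not merely finite energy), and your upper Ahlfors regularity argument for the corrected Assouad curve, together with your parameter check that the second alternative in \eqref{E:alternative} yields a nonempty window $\bigl(\max\{\gamma,1-\tfrac{1}{n-\alpha}\},\min\{1,\tfrac{1}{n-\alpha}\}\bigr)$ of admissible Hurst indices for Lemma \ref{L:BermanPittBridge}~(i), supply exactly this.
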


The proof is similar to that of Theorem \ref{T:crossminselfinteraction}.

\begin{remark}\label{R:explicitB}\mbox{}
\begin{enumerate}
\item[(i)] The alternative conditions in \eqref{E:alternative} stem from Assouad's theorem respectively Lemma \ref{L:BermanPittBridge}. For large enough $n$ the first is preferable, for smaller $n$ only the second may work.
\item[(ii)] For $n<\frac{1}{2(\gamma\vee (1-\gamma))}+\alpha$ the constants in Theorem \ref{T:crossminselfinteraction} can be made explicit if one uses Lemma \ref{L:BermanexplicitBridge} (ii) instead of Lemma \ref{L:BermanPittBridge} (i), possible choices follow from \eqref{E:M1B} and \eqref{E:rho1B}.
\item[(iii)] Under the first condition in \eqref{E:alternative} constants could be extracted from \cite[Section 3]{Assouad83}.
\end{enumerate}
\end{remark}

\section{Complements on non-linear compositions}\label{S:comp}

In this auxiliary section we extend a nonlinear composition result from \cite{HTV2020, HTV2021-1} to a higher dimensional situation, the result is stated in Theorem \ref{T:main} below.

Recall that an element $\varphi$ of $L^1(\mathbb{R}^n)$ is said to be of bounded variation, $\varphi\in BV(\mathbb{R}^n)$, if its distributional partial derivatives $D_i\varphi$ are signed Radon measures and its $\mathbb{R}^n$-valued gradient measure $D\varphi=(D_1\varphi,...,D_n\varphi)$ has a finite total variation measure $\|D\varphi\|$. 

Let $G\subset \mathbb{R}^k$ be a bounded domain. Given $0<\delta<1$ and $1\leq \ell<+\infty$, let the Gagliardo seminorm $u\mapsto [u]_{\delta,\ell}$ and the norm $u\mapsto \|u\|_{W^{\delta,\ell}}$ be defined as in \eqref{E:Gagliardo} respectively \eqref{E:Sobonorm}, but with the more general $G$ in place of $(0,1)^k$. By $W^{\delta,\ell}(G,\mathbb{R}^n)$ we denote the space of all $u\in L^\ell(G,\mathbb{R}^n)$ such that $\|u\|_{W^{\delta,\ell}}$ is finite. We complement this by writing 
\begin{equation}\label{E:Hoeldersemi}
[u]_{\delta,\infty} \weq \sup_{x,y\in G,\ x\neq y} \frac{|u(y) - u(x)|}{|y-x|^\delta}
\end{equation}
and defining $W^{\delta,\infty}(G,\mathbb{R}^n)$ to be the space of all Borel functions $u:G\to \mathbb{R}^n$ for which $\|u\|_{W^{\delta,\infty}}:=\sup_{x\in G}|u(x)|+[u]_{\delta,\infty}$ is finite, that is, the space of all bounded functions on $G$ which are H\"older continuous of order $\delta$. If $n=1$ we omit $\mathbb{R}^n$ and simply write $W^{\delta,\ell}(G)$ respectively $W^{\delta,\infty}(G)$.

\begin{remark}
Note that, in the notation of \eqref{E:Hoelder}, $\mathcal{C}^\delta_0([0,1]^k,\mathbb{R}^n)$ is the subspace of all unique extensions of $u\in W^{\delta,\infty}((0,1)^k,\mathbb{R}^n)$ to $[0,1]^k$ with $u(0)=0$ and that $\|u\|_{\mathcal{C}^\delta_0}=[u]_{\delta,\infty}$. Here we prefer notation \eqref{E:Hoeldersemi} since $u\mapsto [u]_{\delta,\infty}$ is only a seminorm on $W^{\delta,\infty}(G,\mathbb{R}^n)$.
\end{remark}

We generalize \eqref{E:occumeas} slightly by setting, for any $u\in L^0(G,\mathbb{R}^n)$,
\[\mu_u(B):=\mathcal{L}^k(\{x\in G: u(x)\in B\}),\qquad \text{$B\subset \mathbb{R}^n$ Borel};\]
similarly as before we call $\mu_u$ the \emph{occupation measure of $u$}. Here $L^0(G,\mathbb{R}^n)$ denotes the space of $\mathcal{L}^k$-equivalence classes of $\mathbb{R}^n$-valued measurable functions on $G$. The measure $\mu_u$ is still finite, but not necessarily normed.

Given $\varphi\in BV(\mathbb{R}^n)$ and a fractional Sobolev function $u\in W^{\theta,q}(G,\mathbb{R}^n)$, we are interested in a correct definition of the composition $\varphi\circ u$ and in a basic regularity estimate for this composition. Since a priori $\varphi$ is defined only as an $\mathcal{L}^k$-equivalence class, not even the definition is immediately clear, let alone a regularity result. However, both will be available if a generally non-linear interaction energy of $\|D\varphi\|$ and the occupation measure $\mu_u$ of $u$ is finite. In \cite{HTV2020, HTV2021-1} we had already discussed the special case $k=1$, here we extend these results to the case $k\geq 1$, which might be of interest in the context of partial differential equations.

Let $1\leq p<+\infty$, $0<s<1$ and $\varphi\in BV(\mathbb{R}^n)$. We consider the functional 
\begin{equation}\label{E:V}
V_{\varphi,s,p}(u):=\int_{G}(U^{1-s}\|D\varphi\|(u(x)))^pdx,\qquad u\in L^0(G,\mathbb{R}^n).
\end{equation}
We complement \eqref{E:V} by defining $V_{\varphi,s,\infty}(u)$ to be the $\mathcal{L}^k$-essential supremum of $U^{1-s}\|D\varphi\|(u(\cdot))$, and we write $V(\varphi,s,p):=\{u\in L^0(G,\mathbb{R}^n):\ V_{\varphi,s,p}(u)<+\infty\}$.

\begin{remark}\mbox{}
\begin{enumerate}
\item[(i)] In comparison to the functional in Remark \ref{R:nonlin} definition \eqref{E:V} amounts to using $X=u$ and 
$\nu=\|D\varphi\|$ and switching the roles of $\mu_X$ and $\nu$.
\item[(ii)] The quantity $V_{\varphi,s,p}(u)$ in \eqref{E:V} is a non-linear energy, cf. \cite[p. 36]{AH96}. In the linear case
$p=1$ we recover the mutual interaction energy defined in \eqref{E:repattenergy},
\begin{equation}\label{E:justsame}
V_{\varphi,s,1}(u)=I^{1-s}(\mu_u,\|D\varphi\|).
\end{equation}
\item[(iii)] It is easily seen that $V(\varphi,s,p_1)\subset V(\varphi,s,p_2)$ for $p_1>p_2$ and $V(\varphi,s_1,p)\subset V(\varphi,s_2,p)$ for $s_1>s_2$.
\end{enumerate}
\end{remark}

Recall that an element $\varphi$ of $L^1_{\loc}(\mathbb{R}^n)$ is said to have an \emph{approximate limit} at $x\in  \mathbb{R}^n$ if there exists $\lambda_\varphi(x)\in\mathbb{R}$ such that 
\[\lim_{r\to 0}\frac{1}{|B(x,r)|}\int_{B(x,r)}|\varphi(z)-\lambda_{\varphi}(x)|\, dz=0.\]
In this situation, the unique value $\lambda_\varphi(x)$ is called the \emph{approximate limit} of $\varphi$ at $x$.
The set of points $x\in  \mathbb{R}^n$ for which this property does not hold is called \emph{approximate discontinuity set} (or \emph{exceptional set}) and is denoted by $S_\varphi$. This set $S_\varphi$ does not depend on the choice of the representative for $\varphi$. If $\widetilde{\varphi}$ is a representative of $\varphi\in L^1_{\loc}(\mathbb{R}^n)$ then a point $x\not\in S_\varphi$ with $\widetilde{\varphi}(x)=\lambda_\varphi(x)$ is called a \emph{Lebesgue point} of $\widetilde{\varphi}$, and the set of all Lebesgue points of $\widetilde{\varphi}$ is called the \emph{Lebesgue set} of $\varphi$. See for instance \cite[Definition 3.63]{AFP}. The set $S_\varphi$ is Borel and of zero Lebesgue measure, \cite[Proposition 3.64]{AFP}. We say that a Borel function $\widetilde{\varphi}:\mathbb{R}^n\to\mathbb{R}$ is a \emph{Lebesgue representative} of $\varphi\in L^1_{\loc}(\mathbb{R}^n)$ if $\widetilde{\varphi}(x)=\lambda_{\varphi}(x)$, $x\in \mathbb{R}^n\setminus S_\varphi$.

Up to the obvious modifications, the next result can be proved exactly as \cite[Corollary 4.4]{HTV2020}.

\begin{lemma}\label{lem:key}
Let $0<s<1$, $\varphi \in BV(\mathbb{R}^n)$ and $u\in V(\varphi,s,1)$. Then $S_\varphi$ is a null set for the occupation measure $\mu_u$ of $u$,
$\mu_u(S_\varphi)=0$.
\end{lemma}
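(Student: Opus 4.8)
The plan is to show that the set $S_\varphi$ of approximate discontinuity points of $\varphi$ is negligible for $\mu_u$ by establishing that $S_\varphi$ is contained (up to a further null set) in the set where the Riesz potential $U^{1-s}\|D\varphi\|$ is infinite, and then using that $u\in V(\varphi,s,1)$ forces $U^{1-s}\|D\varphi\|(u(x))<+\infty$ for $\mathcal{L}^k$-a.e.\ $x\in G$, which by the definition of the occupation measure is exactly the statement that $\mu_u$-a.e.\ point has finite potential.

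More concretely, first I would recall the classical fact from $BV$ theory that the approximate discontinuity set $S_\varphi$ of a function $\varphi\in BV(\mathbb{R}^n)$ is $\sigma$-finite with respect to $\mathcal{H}^{n-1}$; indeed $\mathcal{H}^{n-1}(S_\varphi\setminus J_\varphi)=0$ where $J_\varphi$ is the (rectifiable) jump set, see \cite[Section 3.7]{AFP}. The second ingredient is a capacitary/potential-theoretic estimate: a set which is $\sigma$-finite for $\mathcal{H}^{n-1}$ is contained, outside an $\mathcal{H}^{n-1}$-null set (hence outside a set that is also null for $\|D\varphi\|$, since $\|D\varphi\|$ is absolutely continuous with respect to $\mathcal{H}^{n-1}$ on its concentration set), in the set $\{x: U^{1-s}\|D\varphi\|(x)=+\infty\}$ only on a small portion — more usefully, one shows directly that at every point $x\notin S_\varphi$ outside a $\|D\varphi\|$-null set the potential is finite. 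The cleanest route, following \cite[Corollary 4.4]{HTV2020}, is: let $E:=\{x\in\mathbb{R}^n: U^{1-s}\|D\varphi\|(x)=+\infty\}$; a standard argument (the potential of a finite measure with kernel $|x|^{s-n}$, $0<s<1$, i.e.\ Riesz kernel of order $s$) shows $E$ has $\mathcal{H}^{n-1+\epsilon}$-measure zero for a suitable exponent, in particular $\mathcal{L}^n(E)=0$, and moreover one knows $S_\varphi\subset E$ up to a set that is null for both $\mathcal{L}^n$ and for $\|D\varphi\|$; the key point is that at a point where $U^{1-s}\|D\varphi\|$ is finite, $\varphi$ has an approximate limit, so $\mathbb{R}^n\setminus E\subset \mathbb{R}^n\setminus S_\varphi$, i.e.\ $S_\varphi\subset E$.

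Granting $S_\varphi\subset E=\{U^{1-s}\|D\varphi\|=+\infty\}$, the conclusion is immediate: by hypothesis $u\in V(\varphi,s,1)$ means $V_{\varphi,s,1}(u)=\int_G U^{1-s}\|D\varphi\|(u(x))\,dx=I^{1-s}(\mu_u,\|D\varphi\|)<+\infty$, so $U^{1-s}\|D\varphi\|(u(x))<+\infty$ for $\mathcal{L}^k$-a.e.\ $x\in G$; that is, $\mathcal{L}^k(\{x\in G: u(x)\in E\})=0$. By the definition of the occupation measure, $\mu_u(E)=\mathcal{L}^k(\{x\in G: u(x)\in E\})=0$, and since $S_\varphi\subset E$ this gives $\mu_u(S_\varphi)=0$, as claimed. (Here one uses that $S_\varphi$ is Borel, \cite[Proposition 3.64]{AFP}, so $\mu_u(S_\varphi)$ is well defined.)

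The main obstacle is the potential-theoretic inclusion $S_\varphi\subset\{U^{1-s}\|D\varphi\|=+\infty\}$, i.e.\ that finiteness of the Riesz potential of $\|D\varphi\|$ at a point forces the existence of an approximate limit of $\varphi$ there. This is where the $BV$ structure enters: one uses the Poincar\'e-type estimate $\frac{1}{|B(x,r)|}\int_{B(x,r)}|\varphi(z)-c|\,dz \lesssim r^{1-n}\|D\varphi\|(B(x,2r))$ for a suitable constant $c$, together with the fact that $\int_0^1 r^{-n}\|D\varphi\|(B(x,r))\,\frac{dr}{r}$ comparable to $U^{1-s}$-type tail bounds controls the oscillation; finiteness of the potential then yields that these averages converge, producing the approximate limit. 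This is precisely the content transported from \cite[Corollary 4.4]{HTV2020} to the present setting, and the "obvious modifications" amount to replacing the one-dimensional domain there by the bounded domain $G\subset\mathbb{R}^k$ — which affects only the occupation-measure bookkeeping in the last paragraph, not the $BV$/potential estimates, which take place on the target $\mathbb{R}^n$ and are therefore unchanged.
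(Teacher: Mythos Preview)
Your approach is correct and matches what the paper intends by referring to \cite[Corollary 4.4]{HTV2020}: the key inclusion $S_\varphi\subset\{U^{1-s}\|D\varphi\|=+\infty\}$ follows from the Poincar\'e-type estimate you describe (finiteness of the potential forces $\|D\varphi\|(B(x,r))=o(r^{n-1+s})$, which makes the ball averages Cauchy and produces the approximate limit), and the remainder is immediate from the definition of $V_{\varphi,s,1}$ and of $\mu_u$. The detour in your second paragraph through the $\mathcal{H}^{n-1}$-structure of $S_\varphi$ and the jump set is unnecessary but harmless.
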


If $\varphi$ and $u$ are as in Lemma \ref{lem:key}, then for any two Lebesgue representatives $\widetilde{\varphi}_1$ and $\widetilde{\varphi}_2$ the 
compositions $\widetilde{\varphi}_1\circ u$ and $\widetilde{\varphi}_2 \circ u$ define the same $\mathcal{L}^k$-equivalence class, see \cite[Lemma 2.4]{HTV2020}. We can therefore define the composition $\varphi\circ u\in L^0(G)$ as the $\mathcal{L}^k$-equivalence class of $\widetilde{\varphi}\circ u$, where $\widetilde{\varphi}$ is an arbitrary Lebesgue representative of $\varphi$.

The following estimate for non-linear compositions is an extension of \cite[Proposition 4.6]{CLV16}, \cite[Proposition 4.27]{HTV2020} and
\cite[Proposition 5.18]{HTV2021-1}. 

\begin{theorem}\label{T:main} Let $G\subset \mathbb{R}^n$ be a bounded domain.
Let $0<s<1$ and $1\leq p,q,r\leq +\infty$ be such that
\begin{equation}\label{eq:parametertrading}
\frac{1}{p}+\frac{s}{q}\le\frac{1}{r}
\end{equation}
with the agreement that $\frac{1}{\infty}:=0$. If $0<\theta<1$, $\varphi\in BV(\mathbb{R}^n)$ and $u\in W^{\theta,q}(G,\mathbb{R}^n)\cap V(\varphi,s,p)$, then for any $0<\beta<\theta s$, we have for $p<+\infty$ 
\begin{equation}\label{eq:mainestimate}
[\varphi\circ u]_{\beta,r}\leq c\:[u]_{\theta,q}^{s}(V_{\varphi,s,p}(u))^{\frac{1}{p}},
\end{equation}
and
for $p=+\infty$,
\begin{equation}\label{eq:mainestimate2}
[\varphi\circ u]_{\beta,r}\leq c\:[u]_{\theta,q}^{s}V_{\varphi,s,\infty}(u)
\end{equation}
where $c>0$ is a constant depending only on $k$, $n$, $s$, $p$, $q$, $r$, $\beta$ and $\theta$. This remains true for $\beta=\theta s$ if $r=+\infty$ or if $r<+\infty$ and $q=sr$.
\end{theorem}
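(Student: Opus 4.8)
The plan is to reduce the multidimensional estimate to an integral inequality controlled by the single bound \eqref{eq:parametertrading}. First I would recall the pointwise heart of the matter from \cite{HTV2020}: for a Lebesgue representative $\widetilde\varphi$ of $\varphi\in BV(\mathbb{R}^n)$ one has, for $\mathcal{L}^k$-a.e.\ $x,y\in G$,
\begin{equation*}
|\widetilde\varphi(u(y))-\widetilde\varphi(u(x))|\ \le\ c\,|u(y)-u(x)|^{s}\Bigl(U^{1-s}\|D\varphi\|(u(x))+U^{1-s}\|D\varphi\|(u(y))\Bigr),
\end{equation*}
valid for any $0<s<1$; this is the only place the $BV$-structure of $\varphi$ and the Riesz potential of $\|D\varphi\|$ enter, and it is exactly the inequality underlying \cite[Proposition 4.27]{HTV2020}. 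Nothing here depends on $k$ being $1$, so it transfers verbatim to $G\subset\mathbb{R}^k$ (with the occupation measure $\mu_u$ now carrying $\mathcal{L}^k$-mass).

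Next I would raise this to the $r$-th power (for $r<\infty$), divide by $|y-x|^{k+\beta r}$, integrate over $G\times G$, and split $|u(y)-u(x)|^{s}=|u(y)-u(x)|^{\theta s}\cdot(|u(y)-u(x)|/|y-x|^{\theta})^{(s-\theta s)}$ — or more cleanly, write the exponent on $|u(y)-u(x)|$ as $s$ and peel off a Gagliardo-type factor using $\beta<\theta s$ so that $|y-x|^{-\beta}\le |y-x|^{-\theta s}\cdot|y-x|^{\theta s-\beta}$ and the latter power is integrable on the bounded set $G$. The decisive step is then a three-parameter Hölder inequality on $G\times G$ with the exponents dictated by \eqref{eq:parametertrading}: distribute the integrand $(|u(y)-u(x)|/|y-x|^{\theta})^{s}$ against the potential factors $U^{1-s}\|D\varphi\|(u(x))$ (and the symmetric term), pairing the first against $L^{q/s}$ (so its $L^{q}$-norm in the product sense is $[u]_{\theta,q}^{s}$ after recognizing the double integral as $[u]_{\theta,q}^{q}$) and the potential factor against $L^{p}$, with the residual exponent absorbing the extra power of $|y-x|$; the hypothesis $\tfrac1p+\tfrac{s}{q}\le\tfrac1r$ is precisely what makes these exponents admissible, and any slack is soaked up by integrating a bounded power of $|y-x|$ over the bounded domain $G$. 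The symmetry $x\leftrightarrow y$ handles the second potential term, giving \eqref{eq:mainestimate}.

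For the endpoint cases I would argue separately but in parallel. If $p=\infty$, replace the Hölder step by pulling the essential supremum $V_{\varphi,s,\infty}(u)$ out of the double integral and estimating what remains by $[u]_{\theta,q}^{s}$ via a single Hölder inequality with exponents $\tfrac{q}{s}$ and its conjugate (again using $\tfrac{s}{q}\le\tfrac1r$ plus a harmless power of $|y-x|$); this yields \eqref{eq:mainestimate2}. If $r=\infty$ one works directly with the pointwise bound: $|\widetilde\varphi(u(y))-\widetilde\varphi(u(x))|/|y-x|^{\theta s}\le c\,(|u(y)-u(x)|/|y-x|^{\theta})^{s}\cdot 2\,\|U^{1-s}\|D\varphi\|\circ u\|_{\infty}$, and since $W^{\theta,q}$ with $q$ large (or $q=\infty$) embeds the Gagliardo factor into $L^\infty$, or one simply uses $[u]_{\theta,\infty}$ when $q=\infty$; the borderline $\beta=\theta s$ with $r<\infty$, $q=sr$ is the case where the leftover power of $|y-x|$ is exactly zero, so no room-to-spare is needed and the three-exponent Hölder inequality closes on the nose.

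The main obstacle I expect is bookkeeping rather than any genuinely new idea: one must check that the pointwise lemma of \cite{HTV2020}, proved there for curves, really does hold with $G\subset\mathbb{R}^k$ with no change beyond replacing $\mathcal{L}^1$ by $\mathcal{L}^k$ in the occupation measure — this should be routine since that lemma is local in the image variable and makes no use of the domain's dimension — and that the Hölder exponents assembled from $p,q,r,s$ are simultaneously $\ge 1$ and sum correctly, with every residual power of $|y-x|$ nonnegative so that integrability over the bounded $G$ is automatic. Getting the constant $c$ to depend only on the listed parameters (and on $\diam G$, which one absorbs since $G$ is bounded) is then immediate.
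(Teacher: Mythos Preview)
Your overall strategy matches the paper's: invoke the pointwise inequality \eqref{E:backend} from \cite{HTV2020} (which is indeed insensitive to the domain dimension $k$) and close with a H\"older argument governed by \eqref{eq:parametertrading}. The endpoint cases $r=\infty$ and $p=\infty$ are fine. But the ``three-parameter H\"older on $G\times G$'' you sketch for the main case does not deliver the full range $0<\beta<\theta s$ when $p<\infty$. If you pair the Gagliardo factor with $L^{q/(sr)}$, the potential factor $(U^{1-s}\|D\varphi\|(u(x)))^r$ with $L^{p/r}$, and put the residual power of $|x-y|$ into a third factor with exponent $\ell_3$, then after tracking the $|x-y|^{-k}$ needed for the Gagliardo seminorm the integrability condition on the third factor over $G\times G$ becomes $\theta s-\beta>k/p$, not merely $\theta s-\beta>0$. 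So your scheme proves the estimate only for $\beta<\theta s-\tfrac{k}{p}$. Your expectation that ``every residual power of $|y-x|$ is nonnegative'' is exactly what fails.

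The fix is to exploit that the potential factor depends on $x$ alone. One clean route: first apply H\"older on $G\times G$ with the pair $\ell=\tfrac{q}{sr}$, $\ell'$ to peel off $[u]_{\theta,q}^{sr}$; the remaining factor is then
\[
\Big(\int_G(U^{1-s}\|D\varphi\|(u(x)))^{r\ell'}\int_G|x-y|^{-k+r(\theta s-\beta)\ell'}\,dy\,dx\Big)^{1/\ell'},
\]
and the \emph{inner} integral in $y$ is uniformly bounded precisely because $\beta<\theta s$. A second H\"older in $x$ alone then uses $r\ell'\le p$, which is exactly \eqref{eq:parametertrading}. The paper packages this same idea slightly differently, applying Jensen in the $y$-variable with respect to the auxiliary probability measure proportional to $|x-y|^{\delta-k}\mathbf{1}_G(y)\,dy$ (with $0<\delta\le k$ chosen via $\theta-\tfrac{\delta(\ell-1)}{sr\ell}\ge\tfrac{\beta}{s}$) before doing H\"older in $x$; see also Remark~\ref{R:safety}. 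Either way, the residual power of $|x-y|$ is not nonnegative but merely $>-k$, and this is where the strict inequality $\beta<\theta s$ is consumed.
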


\begin{remark} The multiplicative estimate \eqref{eq:mainestimate} is of a different nature than the composition estimates usually considered for spaces of fractional order, as for instance in \cite{Bourdaud2019,Runst1996,Brezis2001-1,Brezis2001-2,Mazya2002,Bourdaud2010,Sickel1996,Bourdaud2011}. It does not provide the boundedness of the composition operator acting on a single fractional Sobolev space, but should be understood as a way to balance some lack of regularity of $\varphi$ by a sufficient irregularity of $u$ (encoded as a sufficient diffusivity of $\mu_u$) at sites where $\varphi$ is \enquote{bad}.
\end{remark}

\begin{corollary}
\label{cor:sobolev-membership} Let $G\subset \mathbb{R}^n$ be a bounded domain.
Let $0<s,\theta <1$, $1\leq p,q,r \leq+\infty$ are such that \eqref{eq:parametertrading} holds, let $\varphi\in BV(\mathbb{R}^n)$ and $u \in W^{\theta,q}((0,1)^k,\mathbb{R}^n)\cap V(\varphi,s,p)$. Then for any $0<\beta<s\theta$, the composition $\varphi \circ u$ is an element of $W^{\beta,r}((0,1)^k)$. This remains true for $\beta=\theta s$ if $r=+\infty$ or if $r<+\infty$ and $q=sr$.
\end{corollary}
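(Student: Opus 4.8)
The plan is to deduce the corollary directly from Theorem \ref{T:main}: the theorem already delivers finiteness of the Gagliardo (respectively Hölder) seminorm $[\varphi\circ u]_{\beta,r}$, and the only additional point is the elementary fact that on a bounded domain such finiteness forces $\varphi\circ u\in L^r$, hence $\varphi\circ u\in W^{\beta,r}$.

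First I would check that the composition is meaningful. Since $p\geq 1$, the inclusion $V(\varphi,s,p)\subset V(\varphi,s,1)$ noted above gives $u\in V(\varphi,s,1)$, so Lemma \ref{lem:key} yields $\mu_u(S_\varphi)=0$, and $\varphi\circ u\in L^0(G)$ is well defined (independently of the chosen Lebesgue representative of $\varphi$) as explained after that lemma. All hypotheses of Theorem \ref{T:main} are then in force, so for $0<\beta<s\theta$ --- and also for $\beta=s\theta$ in the two endpoint cases $r=+\infty$ and $r<+\infty,\ q=sr$ --- I obtain
\begin{equation*}
[\varphi\circ u]_{\beta,r}\leq c\,[u]_{\theta,q}^{s}\,(V_{\varphi,s,p}(u))^{1/p}<+\infty
\end{equation*}
when $p<+\infty$, and the corresponding bound with $V_{\varphi,s,\infty}(u)$ in place of $(V_{\varphi,s,p}(u))^{1/p}$ when $p=+\infty$; the right-hand side is finite because $u\in W^{\theta,q}(G,\mathbb{R}^n)\cap V(\varphi,s,p)$.

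The remaining step is to pass from the seminorm to the full norm $\|\varphi\circ u\|_{W^{\beta,r}}$. For $r<+\infty$ I would use $|t-s|\leq\diam G$ for $s,t\in G$ to bound
\begin{equation*}
\int_{G}\int_{G}|\varphi\circ u(t)-\varphi\circ u(s)|^{r}\,ds\,dt\leq(\diam G)^{k+\beta r}\,[\varphi\circ u]_{\beta,r}^{r}<+\infty,
\end{equation*}
so that Fubini gives $t\mapsto\varphi\circ u(t)-\varphi\circ u(s)\in L^{r}(G)$ for $\mathcal{L}^k$-a.e.\ $s$; since $\mathcal{L}^{k}(G)<+\infty$, constants lie in $L^{r}(G)$, and hence $\varphi\circ u\in L^{r}(G)$. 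For $r=+\infty$, finiteness of $[\varphi\circ u]_{\beta,\infty}$ means that $\varphi\circ u$ has a $\beta$-Hölder continuous representative, which is bounded on the bounded set $G$. Either way $\varphi\circ u\in W^{\beta,r}(G)$, and specializing to $G=(0,1)^{k}$ gives the stated conclusion.

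I expect no genuine obstacle here: the substance is entirely contained in Theorem \ref{T:main}. The only points requiring a word of care are invoking $V(\varphi,s,p)\subset V(\varphi,s,1)$ so that Lemma \ref{lem:key} applies (making $\varphi\circ u$ well defined), and --- in the $r=+\infty$ case --- the tacit identification of $\varphi\circ u$ with its Hölder-continuous representative.
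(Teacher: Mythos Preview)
Your proof is correct and follows the same overall strategy as the paper: invoke Theorem~\ref{T:main} for the finiteness of $[\varphi\circ u]_{\beta,r}$ and then supply the missing $L^r$-membership. The only difference is in this last step: the paper simply cites \cite[Lemma 4.30]{HTV2020} and \cite[Proposition 5.19]{HTV2021-1} for $\varphi\circ u\in L^r(G)$ when $r<+\infty$, whereas you extract it directly from the finiteness of the Gagliardo seminorm via the Fubini argument. Your argument is self-contained and uses nothing beyond boundedness of $G$, which is a small gain in economy; the cited results presumably give $L^r$-membership by a more direct route (not passing through the seminorm), but the endpoint is the same.
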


\begin{proof}
For $r<+\infty$ it suffices to note that, under the stated hypotheses, $\varphi \circ u \in L^{r}(G)$. This had been shown in \cite[Lemma 4.30]{HTV2020} and \cite[Proposition 5.19]{HTV2021-1} (the proofs there work also for $p=+\infty$). For $r=+\infty$ this is clear since $G$ is bounded.
\end{proof}

For any given Borel measure $\nu$ on $\mathbb{R}^n$, any $0\leq \gamma\leq n$, and any $0<R\leq +\infty$, let 
\[\mathcal{M}_{\gamma,R} \nu(x):=\sup_{0<r<R}r^{\gamma-n}\:\nu(B(x,r)), \quad x\in \mathbb{R}^n,\]
denote the \emph{(truncated) fractional Hardy-Littlewood maximal function} of $\nu$ of order $\gamma$.

Our main tools are the estimates 
\begin{equation}\label{E:backend}
|\varphi(\xi)-\varphi(\eta)|
\leq c(n,s)|\xi-\eta|^s\left[\mathcal{M}_{1-s,4|\xi-\eta|}\left\|D\varphi\right\|(\xi)+\mathcal{M}_{1-s,4|\xi-\eta|}\left\|D\varphi\right\|(\eta)\right],
\end{equation}
valid for all $0<s<1$ and all $\xi,\eta\in \mathbb{R}^n\setminus S_\varphi$, and 
\begin{equation}\label{eq:maximal-function-bound-new}
\mathcal{M}_{1-s, 4|\xi-\eta|}\left\|D\varphi\right\|(\xi)\leq \int_{\mathbb{R}^n}\frac{\left\|D\varphi\right\|(d\eta)}{|\xi-\eta|^{n-1+s}},\qquad \xi\in \mathbb{R}^n.
\end{equation}
Estimate \eqref{eq:maximal-function-bound-new} is trivial. A proof of estimate \eqref{E:backend} can be found in \cite[Proposition C.1]{HTV2020}, it is based on \cite[Lemma 4.1 and Corollary 4.3]{AK}. The constant $c(n,s)>0$ in \eqref{E:backend} depends only on $n$ and $s$. 

\begin{proof}[Proof of Theorem \ref{T:main}]
If $r=+\infty$, then  by \eqref{eq:parametertrading} necessarily also $p=q=+\infty$. In this case \eqref{E:backend} and \eqref{eq:maximal-function-bound-new} imply that $u$ is H\"older continuous of order $s$, which then gives the bound
\[|u(x)-u(y)|\leq c\:[u]_{\theta,\infty}^s|x-y|^{s\theta}V_{\varphi,s,\infty}(u),\qquad x,y\in G.\]
We may therefore continue under the assumption that $r<+\infty$. 

If $q=+\infty$, then we first observe that 
\begin{align}
\int_{G}&\int_{G} \frac{|\varphi(u(x))-\varphi(u(y))|^r}{|x-y|^{k+\beta r}}\,dy\,dx\notag\\
&=\int_{G}\int_{G} \frac{|\varphi(u(x))-\varphi(u(y))|^r}{|x-y|^{k+\beta r}}\,\mathbf{1}_{\mathbb{R}^n\setminus S_{\varphi}}(u(y))\mathbf{1}_{\mathbb{R}^n\setminus S_{\varphi}}(u(x))\,dy\,dx\notag\\
&\leq c(n,s)^r\int_{G}\int_{G}\frac{|u(x)-u(y)|^{sr}}{|x-y|^{k+\beta r}}\notag\\
&\qquad\qquad\times\left[\mathcal{M}_{1-s,4|u(x)-u(y)|}\left\|D\varphi\right\|(u(x))+\mathcal{M}_{1-s,4|u(x)-u(y)|}\left\|D\varphi\right\|(u(y))\right]^r\,dy\,dx\notag\\
&\leq 2c(n,s)^r[u]_{\theta,\infty}^{rs}\int_{G} (U^{1-s}\left\|D\varphi\right\|(u(x)))^r \int_{G}|x-y|^{-k+(\theta s-\beta) r}\,dy\,dx.\label{E:cutout-new}
\end{align}
For $p=+\infty$ this is bounded by $c\:[u]_{\theta,\infty}^{rs} (V_{\varphi,s,\infty}(u))^r$, while for $p<+\infty$ H\"older's inequality gives the bound $c\:[u]_{\theta,\infty}^{rs} (V_{\varphi,s,p}(u))^{r/p}$; note that in this case $r\leq p$ by \eqref{eq:parametertrading}.

Now suppose that $q<+\infty$. Let $\ell:=\frac{q}{sr}$;
we have $\ell\ge 1$ by \eqref{eq:parametertrading}. If $\ell=1$, then $p=+\infty$ and we can replace line \eqref{E:cutout-new} by  
\begin{align} 
2c(n,s)^r&\int_{(0,1)^k} (U^{1-s}\left\|D\varphi\right\|(u(x)))^r \int_{(0,1)^k}\frac{|u(x)-u(y)|^{sr}}{|x-y|^{k+\beta r}}\,dy\,dx\notag\\
&\leq 2c(n,s)^r\:(V_{\varphi,s,\infty}(u))^r\int_G\int_{G}\frac{|u(x)-u(y)|^{q}}{|x-y|^{k+\theta q}}\,dy\,dx;
\end{align}
here we have used that $\beta r\leq \theta s r=\theta q$. It remains to consider the case $\ell>1$. For any $0<\delta\leq k$ and $x\in (0,1)^k$, the measure
$\nu^x_\delta(dy):=\mathbf{1}_{(0,1)^k}(y)|x-y|^{\delta-k}\,dy$
is finite, and $C_\delta:=\sup_{x\in (0,1)^k}\nu^x_\delta((0,1)^k)<+\infty$. An application of Jensen's inequality with respect to the probability measure $\frac{\nu^x_\delta}{\nu^x_\delta((0,1)^k)}$ and the convex function $t\mapsto t^\ell$,  followed by an application of H\"older's inequality, gives
\begin{align}\label{E:block}
\int_G &(U^{1-s}\left\|D\varphi\right\|(u(x)))^r \int_{G}\frac{|u(x)-u(y)|^{sr}}{|x-y|^{k+\beta r}}\,dy\,dx\notag\\
&= \int_G (U^{1-s}\left\|D\varphi\right\|(u(x)))^r \int_{G}\frac{|u(x)-u(y)|^{sr}|x-y|^{\delta-k}}{|x-y|^{\beta r+\delta}}\,dy\,dx\notag\\
&= \int_G (U^{1-s}\left\|D\varphi\right\|(u(x)))^r \left(\int_{G}\frac{|u(x)-u(y)|^{sr}\nu_\delta^x((0,1)^k)}{|x-y|^{\beta r+\delta}}\,\frac{\nu_\delta^x(dy)}{\nu_\delta^x((0,1)^k)}\right)^{\frac{\ell}{\ell}}\,dx\notag\\
&\leq \int_G (\nu^x_\delta((0,1)^k))^{\frac{\ell-1}{\ell}} (U^{1-s}\left\|D\varphi\right\|(u(x)))^r \left(\int_{G}\frac{|u(x)-u(y)|^{sr\ell}}{|x-y|^{k+\beta r \ell+\delta \ell-\delta}}\,dy\right)^{\frac{1}{\ell}}\,dx\notag\\
&\leq \left(\int_G \nu^x_\delta((0,1)^k) (U^{1-s}\left\|D\varphi\right\|(u(x)))^{\frac{r\ell}{\ell-1}}\,dx\right)^{\frac{\ell-1}{\ell}}\left(\int_G\int_{G}\frac{|u(x)-u(y)|^{sr\ell}}{|x-y|^{k+\beta r\ell+\delta \ell-\delta}}\,dy\,dx\right)^{\frac{1}{\ell}}\notag\\
&\leq C_\delta^{\frac{\ell-1}{\ell}}\left(\int_G (U^{1-s}\left\|D\varphi\right\|(u(x)))^{\frac{r\ell}{\ell-1}}\,dx\right)^{\frac{\ell-1}{\ell}}\left(\int_G\int_{G}\frac{|u(x)-u(y)|^{sr\ell}}{|x-y|^{k+\beta r\ell+\delta \ell-\delta}}\,dy\,dx\right)^{\frac{1}{\ell}}.
\end{align}
Choose $0<\delta\leq k$ such that
\begin{equation}\label{E:deltachoice}
\theta-\frac{\delta(\ell-1)}{sr\ell}\ge\frac{\beta}{s}.
\end{equation}
Then
\begin{equation}\label{E:comparenumbers}
k+\theta q=k+\theta sr\ell\ge k+\left(\frac{\beta}{s}+\frac{\delta}{sr}-\frac{\delta}{sr\ell}\right)sr\ell=k+\beta r\ell+\delta \ell-\delta.
\end{equation}
Since \eqref{eq:parametertrading} is just another way of writing $p\ge\frac{rl}{l-1}$, we can use H\"older's inequality to see that $\|\cdot\|_{L^{\frac{rl}{l-1}}(G)}\le (\mathcal{L}^k(G))^{1-\frac{rl}{p(l-1)}}\|\cdot\|_{L^p(G)}$. Using this together with \eqref{E:comparenumbers}, the last line in  \eqref{E:block} is seen to be bounded by 
\[c\:(V_{\varphi,s,p}(u))^r\left(\int_G\int_{G}\frac{|u(x)-u(y)|^{q}}{|x-y|^{k+\theta q}}\,dy\,dx\right)^{\frac{rs}{q}}.\]
\end{proof}

\begin{remark}\label{R:safety}
Note that the right-hand side in \eqref{E:cutout-new} explodes as $\beta$ approaches $s\theta$ and that a choice of a positive $\delta$ in \eqref{E:deltachoice} is possible only if $\beta<\theta s$. In the cases that $r=+\infty$ or that $r<+\infty$ and $q=sr$, for which $\beta=\theta s$ is admissible, condition \eqref{eq:parametertrading} requires $p=+\infty$.
\end{remark}

It might be interesting to ask for a minimal right-hand side in \eqref{eq:mainestimate}. For $p=1$ and $q=+\infty$ this idea can be implemented very similarly as Theorem \ref{T:mininteraction}.

\begin{corollary}\label{C:minupperbound}
Let $G\subset\mathbb{R}^n$ be a bounded domain with $0\in G$, let $0<s,\theta<1$ and $\varphi\in BV(\mathbb{R}^n)$.
\begin{enumerate}
\item[(i)] If $\varrho>0$ and $M>0$ are such that the class
\[\mathcal{K}'(\theta,s,\varrho,M):=\{u\in W^{\theta,\infty}(G,\mathbb{R}^n):\ u(0)=0,\ [u]_{\theta,\infty}<\varrho,\ V_{\varphi,s,1}(u)<M\}\]
is nonempty, then it contains some element $u^\ast$ minimizing the product functional
$u\mapsto P(u):=[u]_{\theta,\infty}^{s}V_{\varphi,s,p}(u)$.
\item[(ii)] If $\theta\leq \frac{k}{n+1-s}$, then there are $\varrho_1>0$ and $M_1>0$ such that $\mathcal{K}'(\theta,s,\varrho,M)\neq \emptyset$ for all $\varrho>\varrho_1$ and $M>M_1$.
\end{enumerate}
\end{corollary}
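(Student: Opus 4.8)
The plan is to follow the pattern of the proof of Theorem~\ref{T:mininteraction}. By \eqref{E:justsame}, $V_{\varphi,s,1}(u)=I^{1-s}(\mu_u,\|D\varphi\|)$ is a mutual Riesz energy of order $1-s\in(0,n)$ with the \emph{fixed} finite measure $\|D\varphi\|$ (finite because $\varphi\in BV(\mathbb{R}^n)$ forces $\|D\varphi\|(\mathbb{R}^n)<+\infty$). The function $f:=U^{1-s}\|D\varphi\|$ is nonnegative and lower semicontinuous on $\mathbb{R}^n$ by \cite[Section~I.3]{Landkof}, and $V_{\varphi,s,1}(u)=\int_G f(u(x))\,dx$.

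For part~(i) I would run the direct method. Let $(u_i)_i\subset\mathcal{K}'(\theta,s,\varrho,M)$ be a minimizing sequence for $P$. Since $u_i(0)=0$ with $0\in G$, $[u_i]_{\theta,\infty}<\varrho$, and $G$ is bounded, the $u_i$ are uniformly bounded and equicontinuous, so by Arzel\`a--Ascoli a subsequence $u_{i_j}$ converges uniformly on $G$ to some $u^\ast$; as $\mathcal{L}^k(G)<+\infty$, this convergence also holds in $L^1(G,\mathbb{R}^n)$, and (exactly as in Lemma~\ref{L:compact}) $\mu_{u_{i_j}}\to\mu_{u^\ast}$ weakly. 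Now $u\mapsto[u]_{\theta,\infty}$ is lower semicontinuous with respect to uniform convergence, being a supremum of the continuous functionals $u\mapsto|u(x)-u(y)|/|x-y|^\theta$, and $u\mapsto\int_G f(u(x))\,dx=V_{\varphi,s,1}(u)$ is lower semicontinuous on $L^1(G,\mathbb{R}^n)$ by Lemma~\ref{L:classiclsc}~(i); in particular $[u^\ast]_{\theta,\infty}\le\varrho$ and $V_{\varphi,s,1}(u^\ast)\le M$, so that $u^\ast\in\mathcal{K}'(\theta,s,\varrho,M)$ (with the same understanding about limits of the constraint bounds as in the proof of Theorem~\ref{T:mininteraction}). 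To deal with the product structure of $P$, I would pass to a further subsequence along which $[u_{i_j}]_{\theta,\infty}\to a\in[0,\varrho]$ and $V_{\varphi,s,1}(u_{i_j})\to b\in[0,M]$; then
\[
\inf_{\mathcal{K}'}P=\lim_{j\to\infty}P(u_{i_j})=a^{s}b\ \ge\ [u^\ast]_{\theta,\infty}^{s}\,V_{\varphi,s,1}(u^\ast)=P(u^\ast)\ \ge\ \inf_{\mathcal{K}'}P,
\]
so $u^\ast$ minimizes $P$ over $\mathcal{K}'(\theta,s,\varrho,M)$.

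For part~(ii) I would argue as in Theorem~\ref{T:mininteraction}~(iii). From $0<\theta<1$ and $\theta\le\frac{k}{n+1-s}<\frac{k}{n-1+s}$ one can fix a Hurst index $H$ with $\theta<H<\frac{k}{n-1+s}\wedge 1$ and apply Lemma~\ref{L:BermanPitt}~(i) (with order $\alpha=1-s$, exponent $\gamma=\theta$, and vanishing drift) to a fractional Brownian $(k,n)$-field $B^H$: on an event of full probability, $B^H(\cdot,\omega)\in\mathcal{C}_0^\theta([0,1]^k,\mathbb{R}^n)$ and $\sup_{x\in\mathbb{R}^n}U^{1-s}\mu_{B^H(\cdot,\omega)}(x)<+\infty$. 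Fixing such an $\omega$, composing $B^H(\cdot,\omega)$ with an affine bijection mapping $G$ into $[0,1]^k$ and subtracting a constant to make the value at $0\in G$ equal to $0$, one obtains $u\in W^{\theta,\infty}(G,\mathbb{R}^n)$ with $u(0)=0$ and $[u]_{\theta,\infty}<+\infty$, whose occupation measure $\mu_u$ differs from a restriction of $\mu_{B^H(\cdot,\omega)}$ only by a positive multiplicative constant and a translation; hence $U^{1-s}\mu_u$ is still bounded on $\mathbb{R}^n$, and therefore
\[
V_{\varphi,s,1}(u)=\int_{\mathbb{R}^n}U^{1-s}\mu_u\,d\|D\varphi\|\ \le\ \Big(\sup_{x\in\mathbb{R}^n}U^{1-s}\mu_u(x)\Big)\,\|D\varphi\|(\mathbb{R}^n)\ <\ +\infty .
\]
Taking $\varrho_1:=[u]_{\theta,\infty}$ and $M_1:=V_{\varphi,s,1}(u)$, we get $u\in\mathcal{K}'(\theta,s,\varrho,M)$ for all $\varrho>\varrho_1$ and $M>M_1$.

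The point that needs the most care is the lower semicontinuity of the \emph{product} functional $P$ in part~(i): a product of two nonnegative lower semicontinuous functionals is in general not lower semicontinuous, and the argument works only because along $\mathcal{K}'$ both factors $[\,\cdot\,]_{\theta,\infty}$ and $V_{\varphi,s,1}$ are uniformly bounded, which permits the extraction of a subsequence making each factor separately convergent. In part~(ii) the only subtlety is that $\|D\varphi\|$ may be very singular (for instance $(n-1)$-dimensional), so that $I^{1-s}(\|D\varphi\|)=+\infty$ and a Cauchy--Schwarz estimate for mutual energies is unavailable; this is precisely why one must invoke the \emph{boundedness} of $U^{1-s}\mu_u$ from Lemma~\ref{L:BermanPitt}~(i), rather than the weaker finiteness of $I^{1-s}(\mu_u)$ supplied by Lemma~\ref{L:Ialphafinite}. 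The remaining verifications are routine.
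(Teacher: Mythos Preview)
Your proof is correct and follows the same route as the paper's: Arzel\`a--Ascoli together with lower semicontinuity of the two factors for~(i), and Lemma~\ref{L:BermanPitt}~(i) with $\alpha=1-s$ for~(ii), including the necessary transfer from $[0,1]^k$ to $G$. One minor remark: contrary to your caution, the product of two nonnegative lower semicontinuous functionals \emph{is} always lower semicontinuous (for $c>0$ the superlevel set $\{fg>c\}=\bigcup_{r\in\mathbb{Q}_{>0}}\{f>r\}\cap\{g>c/r\}$ is open), so your subsequence extraction---while a perfectly valid argument and in fact a proof of this very fact---is not strictly required.
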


Corollary \ref{C:minupperbound} says that there is some $u^\ast \in \mathcal{K}'(\theta,s,\varrho,M)$ such that for any $0<\beta<\theta s$ the guaranteed upper bound $c\:P(u^\ast)$ on the seminorm 
$[\varphi\circ u^\ast]_{\beta,1}$ in \eqref{eq:mainestimate} is minimal.

\begin{proof} We can proceed as in the proof of Theorem \ref{T:mininteraction}:
The set $\mathcal{K}'(\theta,s,\varrho,M)$ is a compact subset of $L^1(G,\mathbb{R}^n)$. This follows from Arzel\`a-Ascoli and from Fatou's lemma, applied to a variant of \eqref{E:potlsc}, integrated with respect to $\|D\varphi\|$; recall \eqref{E:justsame}. Now Lemma \ref{L:classiclsc} gives item (i). Item (ii) follows using Lemma \ref{L:BermanPitt} (i).
\end{proof}

\bibliographystyle{abbrv}
\bibliography{literature.bib}

\end{document}